\newtheorem{theorem}{Theorem}[section]
\newtheorem{corollary}[theorem]{Corollary}
\newtheorem{lemma}[theorem]{Lemma}
\newtheorem{proposition}[theorem]{Proposition}
\newtheorem{definition}[theorem]{Definition}
\newtheorem{remark}[theorem]{Remark}
\newtheorem*{acknowledgement}{Acknowledgement}
\newtheorem*{address}{Address}
\newtheorem{notation}[theorem]{Notation}
\numberwithin{equation}{section}
\title{Non-divergence evolution operators modeled on Hörmander vector fields with Dini continuous coefficients}
\author{Matteo Faini}
\date{\begin{footnotesize} Dipartimento di Matematica, Politecnico di Milano, via Bonardi 9, 20133 Milano, Italy \end{footnotesize}}
\begin{document}

\maketitle

\begin{abstract}
In this paper we analyze operators $H = \partial_t-\sum_{i,j} a_{ij}(x,t) X_i X_j$, where the $X_i$'s are Hörmander vector fields generating a Carnot group and $A = [a_{ij}]$ is a symmetric and uniformly positive-definite matrix whose entries satisfy double Dini continuity, a strictly weaker condition than Hölder continuity. For these operators, we build a fundamental solution and show a two-sided Gaussian estimate for the latter, as well as upper Gaussian estimates for its derivatives up to weight $2$. As a consequence, we prove an existence result for the related Cauchy problem, under a Dini-type condition on the source.
\end{abstract}

\footnote{2020 \emph{Mathematics Subject Classification}: 35K65, 35K08, 35K15
\newline
Key words: ultraparabolic operators; Hörmander vector fields; fundamental solution; parametrix method; Carnot groups; Dini-continuity.}

\section{Introduction}
\label{sec 1}
In this paper we build a fundamental solution $\Gamma$ for nonvariational evolution operators of the (degenerate) form
\begin{equation}
\label{definition H}
    H = \partial_t - \sum_{i,j=1}^{m} a_{ij}(x,t) \, X_i X_j = \partial _t - a^{ij}(x,t) \, X_i X_j,
\end{equation}
where $X_1, ..., X_m$ are (smooth) Hörmander vector fields generating a Carnot group in $\mathbb{R}^n$ ($n>m$) and the matrix $[a_{ij}(x,t)]$ is symmetric and uniformly positive definite (in $\mathbb{R}^m$), with double Dini continuous entries in $\mathbb{R}^{n+1}$ (precise definitions are given below); this result is obtained through an adaptation of the classical parametrix method. We further show that $\Gamma$, together with its derivatives (up to weight $2$) along the vector fields appearing in $H$, satisfy Gaussian estimates similar to those enjoyed by the fundamental solutions of constant coefficients' heat operators; these yield also a reproduction formula for $\Gamma$ and existence of classical solutions for the Cauchy problem.

\vspace{2mm}

We follow a procedure known in literature as \emph{parametrix method}, firstly developed by E. E. Levi in \cite{Levi_ellittico_1}, \cite{Levi_ellittico_2} to construct fundamental solutions of uniformly elliptic operators with Hölder-continuous coefficients; later this procedure has been adapted by Friedman to obtain existence results in the context of uniformly parabolic operators (under the same regularity assumption on the coefficients), as discussed in \cite{Friedman_parabolic_PDE}. \\
In the last decades, several important extensions of the parabolic version of
Levi's method have been developed. In \cite{KFP_Polidoro}, Polidoro constructed a
fundamental solution for ultraparabolic operators of Kolmogorov-Fokker-Planck
type with H\"{o}lder continuous coefficients, exploiting the knowldedge of an
explicit fundamental solution for the model operator with constant
coefficients, built by Lanconelli and Polidoro in \cite{hypoelliptic_op}. In \cite{BLU_Levi_ultraparabolic}
Bonfiglioli, Lanconelli and Uguzzoni extended the applicability of the
procedure to nonvariational degenerate parabolic operators, still with
H\"{o}lder continuous coefficients, structured on H\"{o}rmander vector fields
on Carnot groups. This was the first case when Levi method was implemented
in a situation where the fundamental solution for the model operator with
constant coefficients was not explicitly known. In \cite{memoire_Bramanti_BLU} Bramanti, Brandolini,
Lanconelli, Uguzzoni built a fundamental solution for similar operators built
with general H\"{o}rmander vector fields (without any underlying structure of
translations and dilations), with local results. In \cite{Heat_kernels}, Biagi and
Bramanti generalized the results in \cite{memoire_Bramanti_BLU} again to similar operators built on
homogeneous H\"{o}rmander vector fields (without any underlying group
structure of translations), with global results. A different type of
generalization of the parabolic parametrix method is related to the weakening
of the regularity assumptions on the leading coefficients. In \cite{Regularity_Kolmogorov}, Lucertini,
Pagliarani and Pascucci built a fundamental solution for
Kolmogorov-Fokker-Planck operators of the kind studied in \cite{hypoelliptic_op}, but with
coefficients $a_{ij}\left(  x,t\right)  $ H\"{o}lder continuous in space and
only bounded measurable in time. In this case the model operator is the KFP
operator with coefficients $a_{ij}\left(  t\right)  $ only depending on time,
in a bounded measurable way; for these operators, an explicit fundamental
solution has been previously built in \cite{Fund_sol_KFP}. Recently, in \cite{Levi_Dini_parabolico}, Zhenyakova
and Cherepova adapted the parabolic parametrix method to uniformly parabolic
operators with double Dini continuous coefficients. In the present paper, we
combine the extension made in \cite{BLU_Levi_ultraparabolic} about the operator class with the
generalization in \cite{Levi_Dini_parabolico} about the coefficients' regularity, in order to prove
the existence of a fundamental solution for $H$ and the relative Gaussian
estimates under these weaker assumptions on the considered differential operator.

$\\$
In order to state in detail our assumptions, we briefly recall some notions. We call \emph{Carnot group} on $\mathbb{R}^n$ an algebraic structure $(\mathbb{R}^n, \circ, \delta_{\lambda})$ with:
\begin{itemize}
    \item [(i)] $\circ$ Lie group law on $\mathbb{R}^n$ (that is, a group law such that both the \say{translation} $(x,y) \mapsto x \circ y$ and the inversion $x \mapsto x^{-1}$ are smooth maps);
    \item [(ii)] $\{\delta_{\lambda}\}_{\lambda>0}$ family of Lie group automorphisms of $(\mathbb{R}^n, \circ)$ (\say{dilations}) acting as $\delta_{\lambda}(x)=(\lambda^{\sigma_1} x_1,..., \lambda^{\sigma_n} x_n)$, for integers $1 = \sigma_1 \leq ... \leq \sigma_n$;
    \item [(iii)] the Lie algebra $\mathfrak{a}$ of left-invariant smooth vector fields on $(\mathbb{R}^n, \circ)$ stratified as $\mathfrak{a}=\mathfrak{a}_1 \oplus \mathfrak{a}_2 \oplus ... \oplus \mathfrak{a}_s$, where $\mathfrak{a}_1$ is spanned by a set of $1$-homogeneous vector fields, $\mathfrak{a}_k=[\mathfrak{a_1}, \mathfrak{a}_{k-1}]$ for all $2 \leq k \leq s$ and $[\mathfrak{a}_s, \mathfrak{a}_1]=0$.
\end{itemize}
The vector fields spanning $\mathfrak{a}_1$ are called \emph{(Lie) generators} of $\mathbb{G}$, whereas the number $s$ in (iii) is the \emph{step} of $\mathbb{G}$ and stands for the length of commutators of the vector fields in $\textbf{X}=\{X_1, X_2, ..., X_m\}$ that are needed to fulfill Hörmander condition at any point. 

\vspace{2mm}

According to this notation, our first assumption is:
\begin{itemize}
    \item [(H1)] $X_1,...,X_m$ are smooth vector fields on $\mathbb{R}^n$, agreeing with the partial derivatives $\partial_{x_1}, ..., \partial_{x_m}$ (respectively) in the origin and generating a Carnot group $\mathbb{G}=(\mathbb{R}^n, \circ, \delta_{\lambda})$, with
    \begin{equation*}
        \delta_{\lambda}(x)=(\lambda^{\sigma_1} x_1, ..., \lambda^{\sigma_n} x_n) \quad \text{for integers} \: 1 = \sigma_1 \leq ... \leq \sigma_n.
    \end{equation*}
\end{itemize}
Let $||\cdot ||$ denote an \emph{homogeneous norm} on $\mathbb{G}$, namely a continuous function $||\cdot|| : \mathbb{R}^n \to [0,+\infty)$ with the following properties:
\begin{itemize}
    \item [(i)] $||x||=0$ if and only if $x=0$;
    \item [(ii)] $||\delta_{\lambda}(x)||=\lambda ||x||$ for every $x \in \mathbb{R}^n, \, \lambda>0$;
    \item [(iii)] there exists a constant $c \geq 1$ such that $||x|| \leq c ||x^{-1}||$ for all $x \in \mathbb{R}^n$;
    \item [(iv)] there exists a constant $\kappa \geq 1$ such that $||x \circ y|| \leq \kappa(||x||+||y||)$ for all $x,y \in \mathbb{R}^n$.
\end{itemize}
We introduce also the induced quasisymmetric quasidistance $d:\mathbb{R}^n \times \mathbb{R}^n \to [0,+\infty)$, $d(x,y):=||y^{-1} \circ x||$, which inherits from $||\cdot||$ the properties
\begin{itemize}
    \item [(i')] $d(x,y)=0$ if and only if $x=y$;
    \item [(ii')] $d(\delta_{\lambda}(x), \delta_{\lambda}(y))=\lambda d(x,y)$ for every $x, \, y \in \mathbb{R}^n, \, \lambda>0$;
    \item [(iii')] $d(x,y) \leq cd(y,x)$ for a constant $c \geq 1$ (\emph{quasi-symmetry});
    \item [(iv')] $d(x,z) \leq \kappa(d(x,y)+d(y,z))$ for every $x,y,z \in \mathbb{R}^n$, with $\kappa \geq 1$ (\emph{quasi-triangle inequality});
\end{itemize}
analogous results hold (in $\mathbb{R}^{n+1}$) for the associated \say{parabolic} quasidistance
\begin{align*}
    d_p((x,t),(y,s)) := d(x,y)+\sqrt{|t-s|}, \quad x,y \in \mathbb{R}^n, \: t,s \in \mathbb{R}.
\end{align*}
Finally, we define the \emph{homogeneous dimension} of $\mathbb{G}$ as 
\begin{equation*}
    Q := \sum_{i=1}^n \sigma_i,
\end{equation*}
recalling also its relation with volume growth in $\mathbb{G}$:
\begin{equation*}
    x'=\delta_{\lambda}(x) \implies dx'=\lambda^Q dx
\end{equation*}
(see e.g. \cite[]{Hörmander_operators_librone}); we finally set
\begin{equation}
    \label{definition of E(x,t)}
    E(x,t) := t^{-\frac{Q}{2}} \exp \left\{ -\frac{||x||^2}{t} \right\} \quad x \in \mathbb{R}^n, \, t>0,
\end{equation}
which in the Euclidean context, i.e. when $Q=n$, is simply the fundamental solution of the classical heat operator (up to scaling constants).

\vspace{1mm}

Let us now turn to the assumptions on the coefficients $a_{ij}$.\\
We call \emph{modulus of continuity} a continuous non-decreasing function $\omega : \mathbb{R^+} \to \mathbb{R}^+$ with $\omega(0)=0$; we say that the latter satisfies the \emph{Dini condition} if
\begin{equation}
\label{Dini omega}
    \widetilde{\omega}(r) := \int_0^r \frac{\omega(x)}{x} \, dx < +\infty \quad \forall r>0
\end{equation}
and the \emph{double Dini condition} if moreover
\begin{equation}
\label{double Dini omega}
    \widetilde{\widetilde{\omega}}(r)  := \int_0^r \frac{\widetilde{\omega}(x)}{x} \, dx =\int_{0}^{r} \frac{dy}{y} \int_{0}^y \frac{\omega(x)}{x} \, dx < +\infty \quad \forall r>0.
\end{equation}
Note that, given a uniformly continuous function $f$ in a set $\Omega$, the map
\begin{equation*}
    \omega_f(r):=\sup_{x,y \in \Omega: \, d(x,y) \leq r} |f(x)-f(y)|, \quad r \in [0,+\infty)
\end{equation*}
is precisely a modulus of continuity; such a function $f$ is said to be \emph{Dini continuous} (in $\Omega$) if $\omega_f$ satisfies (\ref{Dini omega}), and \emph{double Dini continuous} if $\omega_f$ satisfies also (\ref{double Dini omega}). Instead, $f$ is said to be \emph{locally Dini continuous} if it is Dini continuous on every compact subset of $\Omega$, and analogously for double Dini continuity.

Given $\Lambda>1$ and introduced the set of symmetric constant matrices
\begin{equation*}
    \mathcal{M}_{\Lambda} := \{B \in \mathbb{R}^{m \times m} : B=B^T, \, \Lambda^{-1} |v|^2 \leq \sum_{i,j=1}^m b_{ij} v_i v_j \leq \Lambda |v|^2 \hspace{2mm} \forall v \in \mathbb{R}^m\},
\end{equation*}
we assume that:
\begin{itemize}
    \item [(H2)] $A(z) = [a_{ij}(z)]_{1 \leq i,j \leq m} \in \mathcal{M}_{\Lambda}$ for every $z=(x,t) \in \mathbb{R}^{n+1}$ and
\begin{align}
\label{Dini continuity a_ij}
    & |a_{ij}(z)-a_{ij}(z')| \leq \omega(d_p(z,z')) \quad \forall z,z' \in \mathbb{R}^{n+1},
\end{align}
where $\omega$ is a modulus of continuity satisfying the double Dini condition (\ref{double Dini omega}) and, for suitable constants $c>0$ and $\delta \in (0,1)$, the inequality:
\begin{equation}
\label{Dini (1.2)}
    r_2^{-\delta} \, \omega(r_2) \leq c \, r_1^{-\delta} \, \omega(r_1) \quad \forall r_2 \geq r_1 > 0;
\end{equation}
\item [(H3)] the modulus of continuity $\omega$ is such that
\begin{equation}
\label{further condition omega}
    \int_{0}^r \frac{\omega^{1/2}(x)}{x} \, dx < +\infty \quad \forall r \geq 0.
\end{equation}
\end{itemize}
 Note that, in particular, (H2) implies that the coefficients $a_{ij}$ are \emph{double Dini continuous} over $\mathbb{R}^{n+1}$ w.r.t. the parabolic quasi-distance $d_p$.
 Moreover, it is easy to see that $A(z) \in \mathcal{M}_{\Lambda}$ implies $|a_{ij}(z)| \leq \Lambda$, hence
    \begin{equation*}
        |a_{ij}(z)-a_{ij}(z')| \leq min \{\omega(d_p(z,z')), 2 \Lambda \} = \omega'(d_p(z,z'))
    \end{equation*}
    where $\omega'(r) := min\{ \omega(r), 2\Lambda\}$ is a modulus of continuity satisfying (\ref{double Dini omega}) and (\ref{Dini (1.2)}); thus we can assume w.l.o.g. that 
    \begin{equation}
    \label{boundedness omega}
        \omega(r) \leq 2 \Lambda \quad \forall{r \geq 0}.
    \end{equation}
Let us make a comment about the relation between the \say{Dini assumptions} (H2)-(H3) and usual requirement of Hölder-continuity for $a_{ij}$:

\vspace{2mm}

\begin{remark}
\label{Dini strictly contains Hölder}
    Hölder continuity trivially implies double Dini continuity (as well as (\ref{Dini (1.2)})-(\ref{further condition omega})), but the converse is not true: e.g., for any $\alpha>2$, the function $f_{\alpha}(x,t):=\omega_{\alpha}(||x||+\sqrt{|t|})$ with
    \begin{equation*}
        \omega_{\alpha}(r) := 
        \begin{cases}
            0 \hspace{2cm} \text{if} \: r=0\\
            |log(r)|^{-\alpha} \hspace{0.6cm} \text{if} \:  0 < r < \frac{1}{2} \\
            |log(\frac{1}{2})|^{-\alpha} \hspace{0.5cm} \text{if} \: r \geq \frac{1}{2}
        \end{cases}
    \end{equation*}
    is double Dini continuous on $\mathbb{G} \times \mathbb{R}$, but not Hölder continuous therein. For this reason, this paper is a generalization of analogous results discussed in \cite{BLU_Levi_ultraparabolic}.
\end{remark}

\vspace{2mm}

For the sake of clarity, here below we explicitly point out some conventions that will be used throughout the whole paper.

\vspace{2mm}

 \begin{notation}
 \label{general notation}
- Whenever convenient, it is implicitly assumed that any repeated index (appearing once as apex and once as subscript) is summed from $1$ up to $m$, according to Einstein's convention (as done e.g. in (\ref{definition H})). \\
- We will not explicit the dependence of constants from our \say{data}, i.e. the numbers $\delta$, $\Lambda$ and the structure of $\mathbb{G}$; a constant that depends also on other parameters $f_1, ..., f_n$ will be denoted by $c(f_1, ..., f_n)$. Moreover, the symbol $c$ (or $c_0, c_1, ...$) may be used several times for possibly different constants; instead, symbols $\overline{c},\widehat{c}, \check{c}, \widetilde{c}$ (or $\overline{c}_i$, $\widehat{c}_i$, ...) are used to denote specific constants, fixed once and for all, hence each of them denote a unique constant at every occurrence.\\
- $x, \, \xi, \, y$ denote \say{spatial} variables in $\mathbb{R}^n$; $t,\, \tau,\, \eta$ denote time variables (in $\mathbb{R}$); $z=(x,t), \, \zeta=(\xi,\tau)$ and $\chi=(y,\eta)$ are compact notations for points in $\mathbb{R}^{n+1}$.\\
- $D$ denotes the diagonal of $\mathbb{R}^{n+1} \times \mathbb{R}^{n+1}$, i.e. $D=\{(z,\zeta) \in \mathbb{R}^{2n+2}: z =\zeta\}$.
 \end{notation}

 \vspace{2mm}

We conclude this introductory section by stating our main result:

\vspace{2mm}

\begin{theorem}[Fundamental solution and Gaussian estimates]
\label{existence and gaussian estimates fund sol H}
    Under assumptions (H1)-(H2)-(H3), the operator (\ref{definition H}) admits a \emph{fundamental solution} $\Gamma$, namely a function $\Gamma=\Gamma(x,t; \xi,\tau) : (\mathbb{R}^{n+1} \times \mathbb{R}^{n+1}) \setminus D \to \mathbb{R}$ such that:  \begin{itemize}
        \item [(i)] $\Gamma$ is continuous and admits continuous Lie derivatives up to order two along $X_1, ..., X_m$ w.r.t. $x$ and continuous derivative w.r.t. $t$ (for any $(x,t) \neq (\xi,\tau)$); furthermore, for any fixed $(\xi,\tau) \in \mathbb{R}^{n+1}$ there holds
        \begin{equation*}
            H \Gamma(\cdot,\cdot; \xi,\tau) = 0 \hspace{5mm} \text{in} \: \mathbb{R}^{n+1} \setminus \{ (\xi,\tau) \}.
        \end{equation*}
        \item  [(ii)] 
        $H\Gamma(\cdot; \zeta)=\delta_{\zeta}$ in $\mathcal{D}'(\mathbb{R}^{n+1})$: namely, for any $\varphi \in C^{\infty}_0(\mathbb{R}^{n+1})$, the function 
        \begin{equation*}
            u(z)=\int_{\mathbb{R}^{n+1}} \Gamma(z;\zeta) \, \varphi(\zeta) \, d\zeta
        \end{equation*}
        satisfies $Hu=\varphi$ pointwise in $\mathbb{R}^{n+1}$.
            \item [(iii)] $\Gamma(x,t; \xi,\tau)=0$ whenever $t \leq \tau$ and, given any $T>0$, for every $x \in \mathbb{R}^n, \, \xi \in \mathbb{R}^n, \, 0 < t-\tau \leq T$ the following Gaussian estimates hold true for $i, \, j = 1, \, 2, \, ..., \,m$:
        \begin{align}
            & \label{two sided gauss est Gamma} c_0^{-1} E(\xi^{-1} \circ x, c_1^{-1}(t-\tau)) \leq \Gamma(x,t; \xi,\tau) \leq c_0 \, E(\xi^{-1} \circ x, c_1 (t-\tau));\\
            &\nonumber 
            |X_j^{x} \Gamma(x,t; \xi,\tau)| \leq c_0 \, (t-\tau)^{-1/2} \, E(\xi^{-1} \circ x, c_1 (t-\tau));\\
            & \nonumber
            |X_i^{x} X_j^{x} \Gamma(x,t; \xi,\tau)|, \, |\partial_t \Gamma(x,t; \xi,\tau)| \leq c_0 \, (t-\tau)^{-1} \, E(\xi^{-1} \circ x, c_1 (t-\tau))
        \end{align}
        for constants $c_1$ depending only on $\Lambda, \delta, \mathbb{G}$, and $c_0$ depending also on $T$.
        \item [(iv)] The following reproduction formula holds for any $x, \, \xi \in \mathbb{R}^n$ and $\tau<s<t$:
        \begin{equation}
            \label{reproduction formula fund sol}
            \Gamma(x,t; \xi,\tau) = \int_{\mathbb{R}^n} \Gamma(x,t; y,s) \, \Gamma(y,s; \xi,\tau) \, dy.
        \end{equation}
        \item [(v)] Given $g=g(x)$ continuous and $f=f(x,t)$ continuous in $(x,t)$ and locally Dini continuous in $x$ uniformly w.r.t. $t$, provided $f, \, g$ satisfy suitable growth estimate (see (\ref{growth condition g}) and (\ref{growth condition f})) and $T>0$ is small enough, then the function
        \begin{equation*}
            u(z)=\iint_{\mathbb{R}^{n} \times (0,t)} \Gamma(z; \zeta) f(\zeta) \, d\zeta + \int_{\mathbb{R}^n} \Gamma(z; \xi,0) g(\xi) \, d\xi
        \end{equation*}
        is a classical solution to the Cauchy problem
        \begin{equation*}
            \begin{cases}
                Hu=f \hspace{8mm} \text{in} \: \mathbb{R}^n \times (0,T)\\
                u(\cdot,0)=g \hspace{5mm} \text{in} \: \mathbb{R}^n.
            \end{cases}
        \end{equation*}
        \end{itemize} 
\end{theorem}

\vspace{2mm}

This will be proved throughout Theorems \ref{estimates for Gamma}, \ref{thm homogeneous Cauchy pb}, \ref{nonnegativity and reproduction}, \ref{lower gaussian estimate gamma} and \ref{solution non hom Cauchy pb, case g=0}.
We point out that (H3) is needed only for Theorem \ref{solution non hom Cauchy pb, case g=0}, namely for the nonhomogeneous Cauchy problem associated to $H$; everything else does not need that extra assumption.

\vspace{2mm}

The rest of the paper is organized as follows: in Section \ref{sec 2} we give some preliminary notions about the control distance in Carnot groups, as well as some properties of Dini continuous functions. Section \ref{sec 3} is fully devoted to the development of the parametrix method: first we provide a general explanation of the procedure, then we implement it in our context, culminating with the construction of a fundamental solution for $H$ and the proof of the relative upper Gaussian estimates. Finally, in Section \ref{sec 4} we deduce existence results for the related Cauchy problem, both in the homogeneous and nonhomogeneous case, under Dini-type conditions on the regularity of the right hand side; we also prove uniform Gaussian estimates from below for $\Gamma$.

\vspace{2mm}

\begin{acknowledgement}
    This paper is an extract of the namesake Master Thesis, conduct by the author at
Politecnico di Milano under the supervision of professor Marco Bramanti. A sincere acknowledgement
goes to the latter for all the guidance and support that lead to the development of the present work. The author wishes to thank also the referees for having read carefully the paper, providing useful suggestions for its improvement.
\end{acknowledgement}

\vspace{2mm}

\section{Notation and preliminaries}
\label{sec 2}
In this section we introduce basic notions and useful lemmas about Carnot groups, as well as convenient results for moduli of continuity satisfying (H2).

\subsection{Carnot groups}
As a starting point, we recall the \say{natural} metric structure of Carnot groups. If $\mathbb{G}=(\mathbb{R}^n, \circ, \delta_{\lambda})$ is a Carnot group with Lie generators $\textbf{X}=\{X_1,...,X_m\}$, then it can be endowed with a distance $d_X$ induced by the latter; this is called \emph{Carnot-Caratheodory distance} (or \emph{control distance}) and is defined as follows:

\vspace{2mm}

\begin{definition}
    Given $X_1,...,X_m$ smooth Hörmander vector fields on $(\mathbb{R}^n,\circ)$, for any $x, \, y \in \mathbb{R}^n$ and $\delta>0$ let $C_{x,y}(\delta)$ denote the set of absolutely continuous curves $\gamma:[0,1] \to \mathbb{R}^n$ with $\gamma(0)=x, \, \gamma(1)=y$ and
    \begin{align*}
        \dot{\gamma}(t)=\alpha^i(t) \, (X_i)_{\gamma(t)} \quad \text{a.e. in} \: (0,1), \quad ||\alpha_i||_{L^{\infty}(0,1)} \leq \delta \quad \forall 1 \leq i \leq m;
    \end{align*}
    we call \emph{Carnot-Caratheodory} (or \emph{CC}, in short) \emph{distance} induced by $X_1,...,X_m$ the function $d_X:\mathbb{R}^n \times \mathbb{R}^n \to [0,+\infty), \: d_X(x,y)=\inf\{\delta>0: C_{x,y}(\delta) \neq \emptyset\}$.    
\end{definition}

\vspace{2mm}

It is well known (see \cite[Theorem 1.45]{Hörmander_operators_librone}) that $d_X$ is indeed a distance on $\mathbb{R}^n$ (in particular, any couple of points $x, \, y \in \mathbb{R}^n$ can be joined by a curve $\gamma$ as above) and it is compatible with translations and dilations of $\mathbb{G}$, namely:
\begin{align*}
    &d_X(z \circ x, z \circ y)=d_X(x,y) \quad \forall x,\, y, \, z \in \mathbb{R}^n;\\
    & d_X(\delta_{\lambda}(x), \delta_{\lambda}(y))=\lambda d_X(x,y) \quad \forall x,\, y \in \mathbb{R}^n, \, \lambda>0.
\end{align*}
The latter implies $d_X$ is induced by an homogeneous norm on $\mathbb{G}$, hence it is equivalent to the homogeneous quasidistance $d$ introduced in Section \ref{sec 1} (see e.g. \cite[Proposition 3.12]{Hörmander_operators_librone}).

\vspace{2mm}

Let us also define a regularity class of functions induced by the structure of $\mathbb{G}$; to help clarity of the definition below, we recall that the \emph{Lie derivative} along a smooth vector field $X$ of a smooth function $u$ at a point $x_0$ is defined as 
\begin{align*}
    Xu(x_0)=\lim_{h \to 0} \frac{u(x_0 \circ \mu_X(h))-u(x_0)}{h},
\end{align*}
(if the limit exists), being $\mu_X$ the solution of the Cauchy problem
\begin{equation*}
    \dot{\psi}(t)=X_{\psi(t)}, \quad \psi(0)=0.
\end{equation*}
(which is well defined, at least locally, by Cauchy theorem). From a geometrical viewpoint, $\mu_X$ is the integral curve of the vector field $X$ passing from $x_0$.

\vspace{2mm}

\begin{definition}[The class $\mathfrak{C}^2$; see \cite{BLU_Levi_ultraparabolic}, Definition 2.6]
\label{def of C2}
    Let $\Omega \subseteq \mathbb{R}^{n+1}$ be an open set. We denote by $\mathfrak{C}^2(\Omega)$ the class of functions $u(x,t)$ defined on $\Omega$ which are continuous in $\Omega$ w.r.t. the pair $(x,t)$ and such that $u(\cdot, t)$ has continuous Lie derivatives up to second order along the vector fields $X_1, ..., X_m$ (w.r.t. $x$, for every fixed $t$) and $u(x, \cdot)$ has continuous derivative (w.r.t. $t$, for every fixed $x$), in their respective domains of definition.
\end{definition}

\vspace{2mm}

We conclude this overview with the following sort of \say{Lagrange theorem} in Carnot groups (see \cite[Lemma 7.6]{BLU_uniform_gaussian_estimates}):

\vspace{2mm}

\begin{lemma}
\label{UGE lemma 7.6}
    There exists a constant $c>0$ such that
    \begin{equation*}
        |u(x_1)-u(x_2)| \leq c \, d(x_1,x_2) \, \sup_{d(x_1,y) \leq c \, d(x_1,x_2)} \, (|X_1 u(y)| + ... + |X_m u(y)|)
    \end{equation*}
    for every $x_1, \, x_2 \in \mathbb{R}^n$ and for every function $u \in C^1(\mathbb{R}^n)$.
\end{lemma}

\vspace{2mm}

\subsection{Dini continuity}

Let us now briefly deal with some useful properties of (double) Dini moduli of continuity, starting from the following basic remark:

\vspace{2mm}

\begin{remark}
    Let $\omega$ be a modulus of continuity which satisfies (\ref{double Dini omega}). Then also $r \mapsto \omega(\sqrt{r})$ and $r \mapsto \omega(cr)$ (for any $c>0$) are moduli of continuity satisfying (\ref{double Dini omega}), and there hold:
    \begin{align*}
        & \int_{0}^r \frac{\omega(c x)}{x} \, dx = \int_0^{cr} \frac{\omega(x)}{x} \, dx = \, \widetilde{\omega}(cr); \quad \int_0^r \frac{dy}{y} \int_0^y \frac{\omega(cx)}{x} \, dx = \widetilde{\widetilde{\omega}}(cr);\\
        & \int_{0}^r \frac{\omega(\sqrt{x})}{x} \, dx = \int_{0}^{\sqrt{r}} \frac{2 \omega(\rho)}{\rho} \, d\rho = 2 \widetilde{\omega}(\sqrt{r}); \quad \int_{0}^r \frac{dy}{y} \int_0^y \frac{\omega(\sqrt{x})}{x} \, dx = 4 \widetilde{\widetilde{\omega}}(\sqrt{r}).
    \end{align*}
    If, in addition, $\omega$ satisfies also (\ref{Dini (1.2)}) for some constant $\delta>0$, then 
    \begin{equation}
    \label{Dini 1.17}
        \widetilde{\omega}(r) = \int_0^r \frac{\omega(x)}{x} \, dx \geq C \, r^{-\delta} \, \omega(r) \int_0^r x^{\delta-1} \, dx = C \, \delta^{-1} \, \omega(r) \quad \forall r \geq 0, 
    \end{equation}
    namely $\omega$ can be bounded from above by $\widetilde{\omega}$ (up to a constant).
\end{remark}

\vspace{2mm}

We now turn to two less trivial results:

\vspace{2mm}

\begin{lemma}
\label{inequality omega exp}
    Let $\omega$ satisfy (\ref{double Dini omega})-(\ref{Dini (1.2)}), let $c_0>0$ be a constant. Then for any $\beta>0$ there exist constants $c_1=c_1(\beta)>0$ and $c_2>0$ such that the estimates
    \begin{align}
        & \label{Dini (1.18)}
        \omega^{\beta}(d(x,\xi)) \, E(\xi^{-1} \circ x, c_0(t-\tau)) \leq c_1 \omega^{\beta}(\sqrt{t-\tau}) \, E(\xi^{-1} \circ x, 2c_0(t-\tau));\\
        &
        \label{simil subadditivity}
        \omega(d(x,\xi)+\sqrt{t-\tau}) \, E(\xi^{-1} \circ x, c_0(t-\tau)) \leq c_2 \omega(2 \sqrt{t-\tau}) \, E(\xi^{-1} \circ x, 2c_0(t-\tau))
\end{align}
hold true for every $x, \, \xi \in \mathbb{R}^n, \, t>\tau$.
\end{lemma}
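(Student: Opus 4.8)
The plan is to prove both estimates by splitting the relevant region of $(x,\xi,t,\tau)$-space according to the size of the ``spatial'' quantity $d(x,\xi)$ relative to the ``parabolic'' quantity $\sqrt{t-\tau}$, and in each region to absorb the discrepancy between $\omega$ evaluated at $d(x,\xi)$ and $\omega$ evaluated at $\sqrt{t-\tau}$ into the Gaussian factor $E$, paying for it by enlarging the constant $c$ in the exponent to some $c'\ge c$. Throughout I would write $\rho := d(x,\xi)$ and $s := \sqrt{t-\tau}$, so that $E(\xi^{-1}\circ x, c(t-\tau)) = (c(t-\tau))^{-Q/2}\exp\{-\|\xi^{-1}\circ x\|^2/(c(t-\tau))\}$ and recall that $\|\xi^{-1}\circ x\|$ is comparable to $\rho = d(x,\xi)$ up to structural constants, since $d$ is the quasidistance associated to the homogeneous norm.

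For estimate \eqref{Dini (1.18)}: in the region $\rho \le s$, monotonicity of $\omega$ gives $\omega^\alpha(\rho)\le \omega^\alpha(s)$ directly, and one takes $c'=c$, $C(\alpha)=1$. In the region $\rho > s$, the idea is that the large exponential decay of $E$ beats the polynomial-type growth of $\omega^\alpha$. More precisely, using \eqref{Dini (1.2)} one has $\omega(\rho)\le C\,(\rho/s)^{\delta}\,\omega(s)$ for $\rho\ge s$, hence $\omega^\alpha(\rho)\le C^\alpha (\rho/s)^{\alpha\delta}\omega^\alpha(s)$; combined with the elementary inequality $\sigma^{\alpha\delta} e^{-\sigma^2/c}\le C(\alpha,\delta,c)\, e^{-\sigma^2/c'}$ for a suitable $c'>c$ (valid for all $\sigma\ge 0$, with $\sigma = \|\xi^{-1}\circ x\|/s$ up to structural constants), this yields the claimed bound after reconciling the constants in the exponent. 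Here one uses that for any $\beta>0$ and any $c>0$ one can choose $c'>c$ so that $\sup_{\sigma\ge0}\sigma^{\beta}\exp\{-\sigma^2(1/c-1/c')\}<\infty$.

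For estimate \eqref{simil subadditivity}: using the (quasi-)subadditivity of the homogeneous norm, $d(x,\xi)+\sqrt{t-\tau}\le C_0(\rho+s)$, and the monotonicity of $\omega$, it suffices to bound $\omega(C_0(\rho+s))\,E(\cdot,c(t-\tau))$. Again split on $\rho\le s$ versus $\rho>s$: when $\rho\le s$, $\omega(C_0(\rho+s))\le\omega(2C_0 s)$, which is of the required form with $c_1 = 2C_0$ and $c'=c$; when $\rho > s$, $\omega(C_0(\rho+s))\le\omega(2C_0\rho)$ and one applies \eqref{Dini (1.2)} plus the same polynomial-versus-Gaussian trick with $\alpha=1$ to reduce $\omega(2C_0\rho)$ to a multiple of $\omega(s)$ at the cost of enlarging $c$, then absorb the constant $2C_0$ into $c_1$ by monotonicity (noting $\omega(s)\le\omega(c_1 s)$ once $c_1\ge1$).

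The main obstacle — though it is more a bookkeeping nuisance than a genuine difficulty — is the careful tracking of the structural constants relating $\|\xi^{-1}\circ x\|$ to $d(x,\xi)$ and the quasi-triangle constant in the subadditivity of $\|\cdot\|$, together with making sure the single enlarged exponent constant $c'$ works uniformly across both regions and both inequalities. The only genuinely non-routine ingredient is the elementary lemma that $\sigma^{\beta}e^{-\sigma^2/c}$ is dominated by $e^{-\sigma^2/c'}$ for $c'$ slightly larger than $c$; this is what converts the polynomial blow-up coming from \eqref{Dini (1.2)} into a harmless constant at the price of a worse Gaussian, and it is the reason the statement allows $c'\ge c$ rather than demanding $c'=c$.
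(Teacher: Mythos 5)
Your proposal is correct and follows essentially the same route as the paper: the same split on $d(x,\xi)\le\sqrt{t-\tau}$ versus $d(x,\xi)>\sqrt{t-\tau}$, the same use of \eqref{Dini (1.2)} to compare $\omega$ at the two scales, the same boundedness of $\sigma^{\beta}e^{-k\sigma^2}$ to absorb the resulting polynomial factor into an enlarged Gaussian exponent, and the same reduction of \eqref{simil subadditivity} to \eqref{Dini (1.18)} with $\alpha=1$ via monotonicity. (The only cosmetic difference is that $\|\xi^{-1}\circ x\|$ equals $d(x,\xi)$ by definition here, so no comparability constants are actually needed there.)
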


\vspace{2mm}

\begin{proof}
    For every $0 < s \leq r$ we have $\omega(r) \leq C (r/s)^{\delta} \omega(s)$ by (\ref{Dini (1.2)}), therefore
    \begin{align*}
    & \omega^{\beta}(r) \, \exp \left\{ -\frac{r^2}{c_0 s^2} \right\} \leq  C^{\beta} \left(\frac{r}{s} \right)^{\beta \, \delta} \omega^{\beta}(s) \,  \exp \left\{ -\frac{r^2}{c_0 s^2} \right\}. 
    \end{align*}
   Since the map $t \mapsto t^{\epsilon} \, e^{-kt}$ is bounded in $\mathbb{R}_+$ (for any $\epsilon>0, k>0$), we get
    \begin{align*}
    & \left(\frac{r}{s} \right)^{\beta \delta} \exp \left\{ -\frac{r^2}{2c_0 s^2} \right\} = \left(\frac{r^2}{s^2} \right)^{\beta \delta / 2} \exp \left\{ - \frac{1}{2c_0} \, \frac{r^2}{s^2} \right\} \leq c(\beta).
    \end{align*}
    so that, recalling (\ref{definition of E(x,t)}), we deduce that for every $0 < s \leq r$ there holds
    \begin{equation*}
        \omega^{\beta}(r) \, E(\xi^{-1} \circ x, c_0(t-\tau)) \leq c_1(\beta) \, \omega^{\beta}(s) \, E(\xi^{-1} \circ x, 2c_0(t-\tau))
    \end{equation*}
    By monotonicity of $\omega$ the above inequality holds also if $0 \leq r < s$; setting $r = d(x,\xi)$ and $s = \sqrt{t-\tau}$ we immediately get (\ref{Dini (1.18)}).
    Furthermore, by monotonicity of $\omega$, in the case $d(x,\xi) \leq \sqrt{t-\tau}$ we get
    \begin{align*}
        & \omega(d(x,\xi)+\sqrt{t-\tau}) \, E(\xi^{-1} \circ x, c_0(t-\tau)) \leq \omega(2 \sqrt{t-\tau}) \, E(\xi^{-1} \circ x, c_0(t-\tau)),
        \end{align*}
    i.e. (\ref{simil subadditivity}) holds, whereas if $d(x,\xi) > \sqrt{t-\tau}$ we find
    \begin{align*}
        & \omega(d(x,\xi)+\sqrt{t-\tau}) \, E(\xi^{-1} \circ x, c_0(t-\tau)) \leq \omega(2 d(x,\xi)) \, E(\xi^{-1} \circ x, c_0(t-\tau)).
    \end{align*}
    and (\ref{simil subadditivity}) is achieved by exploiting (\ref{Dini (1.18)}) with $\beta=1$.
\end{proof}

\vspace{2mm}

\begin{lemma}
\label{Dini lemma 1.1} 
    Let $\omega$ satisfy (\ref{double Dini omega}). For any $\epsilon \in (0,\frac{1}{2})$ and for any $T>0$ it holds
    \begin{equation*}
        \int_{0}^{T} \frac{\omega^{\frac{1}{2} + \epsilon}(x)}{x} \, dx < +\infty
    \end{equation*}
\end{lemma}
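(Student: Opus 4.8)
The plan is to convert the double Dini hypothesis into a single, logarithmically weighted Dini condition, and then to conclude with one application of Hölder's inequality using a carefully chosen pair of conjugate exponents.

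First I would rewrite the iterated integral defining $\widetilde{\widetilde{\omega}}$. By Tonelli's theorem, for every $T>0$,
\begin{equation*}
    \widetilde{\widetilde{\omega}}(T)=\int_{0}^{T}\frac{dy}{y}\int_{0}^{y}\frac{\omega(s)}{s}\,ds=\int_{0}^{T}\frac{\omega(s)}{s}\,\ln\!\frac{T}{s}\,ds ,
\end{equation*}
so that (\ref{double Dini omega}) is equivalent to the finiteness, for every $T>0$, of this logarithmically weighted integral. This is the only consequence of the hypothesis I shall need.

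Next I would set $p:=\tfrac{2}{1+2\epsilon}$ and $q:=\tfrac{2}{1-2\epsilon}$; since $\epsilon\in(0,\tfrac12)$, these are conjugate exponents, both strictly greater than $1$. On the interval $(0,T/2)$ I would factor the integrand as
\begin{equation*}
    \frac{\omega^{1/2+\epsilon}(r)}{r}=\Big(\frac{\omega(r)}{r}\,\ln\!\frac{T}{r}\Big)^{1/2+\epsilon}\cdot\frac{r^{-(1/2-\epsilon)}}{\big(\ln\frac{T}{r}\big)^{1/2+\epsilon}} ,
\end{equation*}
which is an algebraic identity since $1-(1/2+\epsilon)=1/2-\epsilon$. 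Applying Hölder with exponents $p$ and $q$: since $(\tfrac12+\epsilon)p=1$, the $p$-th power of the first factor is $\frac{\omega(r)}{r}\ln(T/r)$, whose integral over $(0,T/2)$ is at most $\widetilde{\widetilde{\omega}}(T)<+\infty$; and since $(\tfrac12-\epsilon)q=1$ while $\gamma:=(\tfrac12+\epsilon)q=\tfrac{1+2\epsilon}{1-2\epsilon}>1$, the $q$-th power of the second factor equals $\frac{1}{r(\ln(T/r))^{\gamma}}$, so that the substitution $u=\ln(T/r)$ gives
\begin{equation*}
    \int_{0}^{T/2}\frac{dr}{r\,\big(\ln\frac{T}{r}\big)^{\gamma}}=\int_{\ln 2}^{+\infty}\frac{du}{u^{\gamma}}=\frac{(\ln 2)^{1-\gamma}}{\gamma-1}<+\infty .
\end{equation*}
Therefore $\int_{0}^{T/2}\omega^{1/2+\epsilon}(r)/r\,dr<+\infty$; the leftover piece satisfies $\int_{T/2}^{T}\omega^{1/2+\epsilon}(r)/r\,dr\le\omega(T)^{1/2+\epsilon}\ln 2<+\infty$ by monotonicity of $\omega$, and adding the two yields the claim.

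The hard part will be guessing the correct exponents and factorization; once these are in place the computation is short. Everything rests on the two identities $(\tfrac12-\epsilon)q=1$ and $(\tfrac12+\epsilon)q=\tfrac{1+2\epsilon}{1-2\epsilon}>1$: the strict inequality is exactly what makes the residual logarithmic integral converge, and it is precisely the assumption $\epsilon>0$ that produces it — consistently with the fact that the bare exponent $\tfrac12$ need not give a Dini modulus, which is why the stronger assumption (\ref{stronger condition omega}) must be imposed separately for the nonhomogeneous problem. As an alternative I could argue dyadically: with $r_{k}=T2^{-k}$ one reduces to showing $\sum_{k}\omega^{1/2+\epsilon}(r_{k})<+\infty$, uses monotonicity to bound $\omega(r_{k+1})\le(\widetilde{\omega}(r_{k})-\widetilde{\omega}(r_{k+1}))/\ln 2$, notes that $\sum_{k}(k+1)\big(\widetilde{\omega}(r_{k})-\widetilde{\omega}(r_{k+1})\big)<+\infty$ is the discrete counterpart of (\ref{double Dini omega}), and concludes with the sequence form of Hölder's inequality and the same pair of exponents.
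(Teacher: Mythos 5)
Your argument is correct: the algebraic factorization is an identity, the exponents $p=\tfrac{2}{1+2\epsilon}$, $q=\tfrac{2}{1-2\epsilon}$ are conjugate, $(\tfrac12+\epsilon)p=1$ and $(\tfrac12-\epsilon)q=1$ hold, and the residual integral converges precisely because $\gamma=\tfrac{1+2\epsilon}{1-2\epsilon}>1$; the Tonelli step is legitimate since the integrand is nonnegative. This is, however, a genuinely different route from the paper's. The paper (following Baderko) never invokes H\"older: it splits $(0,T)$ into the set $\Sigma$ where $\omega^{1/2+\epsilon}>\widetilde{\omega}$ and its complement, disposes of the complement by $\omega^{1/2+\epsilon}\le\widetilde{\omega}$ and the double Dini condition, and on $\Sigma$ multiplies and divides by $\widetilde{\omega}^{\alpha}$ with $\alpha=\tfrac{1-2\epsilon}{1+2\epsilon}$ to get the pointwise bound $\widetilde{\omega}^{-\alpha}(x)\,\omega(x)$, which integrates by the substitution $y=\widetilde{\omega}(x)$ to $\int_0^{\widetilde{\omega}(T)}y^{-\alpha}\,dy<\infty$. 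The two proofs exploit the same numerology (your $\gamma$ is exactly $1/\alpha$), but you use $\ln(T/r)$ as the interpolating weight where the paper uses $\widetilde{\omega}(x)$ itself. What your version buys is the explicit observation that the double Dini condition is equivalent to the logarithmically weighted Dini condition $\int_0^T\frac{\omega(s)}{s}\ln\frac{T}{s}\,ds<\infty$, which is conceptually illuminating; what the paper's version buys is a purely pointwise argument with no appeal to H\"older and no need to cut off at $T/2$. Either is acceptable.
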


\vspace{2mm}

   For the proof see \cite{Baderko_potential_parabolic}, where the same proof is carried out under the further assumption that $\omega$ is subadditive (which, however, does not play a role in this result). 

   \vspace{2mm}

   \begin{remark}
       In view of the above result, the extra assumption (H3) is \say{almost harmless}, and it is actually satisfied by the simplest example of modulus of continuity which satisfies (\ref{double Dini omega}) without being of Hölder type (that is, the one in Remark \ref{Dini strictly contains Hölder}), hence this condition does not restrict our framework to the one treated in \cite{BLU_Levi_ultraparabolic}.
   \end{remark}

\vspace{2mm}

\section{Construction of the fundamental solution}
\label{sec 3}

\subsection{General idea of the parametrix method}
\label{subsec 3.1}

As mentioned in Notation \ref{general notation}, we may denote points of $\mathbb{R}^{n+1}$ by $z = (x,t)$, $\zeta = (\xi,\tau)$ and $\chi = (y,\eta)$; until not specified otherwise, we assume $\tau < \eta < t$.
For fixed $\zeta\in \mathbb{R}^{n+1}$, we consider the constant coefficients' operator
\begin{equation*}
    H_{\zeta} := \partial_t-a^{ij}(\zeta) \, X_i X_j
\end{equation*}
obtained from $H$ by \say{freezing} the coefficients $a_{ij}$ in the point $\zeta$. A deep theory about the fundamental solution of $H_{\zeta}$ (typically called \emph{parametrix} for $H$) has already been developed and is summarized in the following two theorems:

\vspace{2mm}

\begin{theorem}[The parametrix and its properties]
\label{properties freezed gamma}
    For any fixed $\zeta \in \mathbb{R}^{n+1}$, the operator $H_{\zeta}$ admits a fundamental solution $\Gamma_{\zeta}$ given by $\Gamma_{\zeta}(x,t; y,s) = \gamma_{\zeta}(y^{-1} \circ x, t - s)$, with $\gamma_{\zeta} : \mathbb{R}^{n+1} \setminus \{0\} \to \mathbb{R}$ non-negative smooth function such that:
    \begin{itemize}
        \item [(i)] $\gamma(x,t)=0$ if and only if $t \leq 0$, and $\gamma_{\zeta}(x,t) \to 0$ as $||x||+\sqrt{|t|} \to +\infty$; 
        \item [(ii)]
        \begin{equation}
        \label{BLU (1.5)}
        \int_{\mathbb{R}^n} \gamma_{\zeta}(x,t) \, dx = 1 \quad \quad \forall t > 0;
        \end{equation}
        \item [(iii)] for any $x \in \mathbb{R}^n, \, t>0, \, \tau>0$ we have the \emph{reproduction property}
        \begin{equation}
        \label{BLU (1.6)}
        \gamma_{\zeta}(x,t + \tau) = \int_{\mathbb{R}^n} 
        \gamma_{\zeta}(y^{-1} \circ x, t) \,\gamma_{\zeta}(y, \tau) \, dy;
    \end{equation} 
        \item [(iv)] the function $\Gamma_{\zeta}(\cdot; \zeta)$ is locally integrable in $\mathbb{R}^{n+1}$ and there holds
        \begin{equation*}
            H_{\zeta} \Gamma_{\zeta}(\cdot; \zeta) = \delta_{\zeta} \quad \text{in} \: D'(\mathbb{R}^{n+1});
        \end{equation*}
        \item [(v)] \label{hom Cauchy problem freezed operator}
        for any continuous function $g : \mathbb{R}^n \to \mathbb{R}$ with
            $|g(x)| \leq M e^{\nu ||x||^2}$ in $\mathbb{R}^n$
        (for some $\nu>0, \, M>0$), there exists $T>0$ such that the function
        \begin{equation*}
            u(z) := \int_{\mathbb{R}^n} \Gamma_{\zeta}(z; y,0) \, g(y) \, dy 
        \end{equation*}
        is well defined in $\mathbb{R}^n \times (0,T)$ and solves the Cauchy problem
        \begin{equation*}
        \begin{cases}
            H_{\zeta} \, u = 0 \hspace{1.15cm} \text{in} \: \mathbb{R}^n \times (0,T) \\
            u(\cdot,0) = g \hspace{1cm} \text{in} \: \mathbb{R}^n.
        \end{cases}
        \end{equation*}
    \end{itemize}
\end{theorem}

\vspace{2mm}

For the proof see \cite[Theorem 2.1]{BLU_uniform_gaussian_estimates}.\\
In the theorem below, for any nonnegative integers $h,k$ we denote by $D_{h,k}$ a differential monomial $X_{i_1} ... X_{i_h} \partial_t^k$, with indices $i_1, ..., i_h \in \{1,..., m\}$.

\vspace{2mm}

\begin{theorem}[Gaussian estimates for the parametrix]
\label{estimate freezed gamma}
    For any nonnegative integers $h,k$ and points $(x,t) \in \mathbb{R}^n \times \mathbb{R}^+$ and $\zeta, \, \zeta' \in \mathbb{R}^{n+1}$, the following hold:
    \begin{align}
        & \label{BLU (2.2)}
        \overline{c}_0^{-1} \, E(x, \overline{c}_0^{-1} t) \leq \gamma_{\zeta}(x,t) \leq \overline{c}_0 \, E(x, \overline{c}_0 t);\\
        & \label{BLU (2.3)}
        |D^{h,k} \gamma_{\zeta}(x,t)| \leq \overline{c}_{h+2k} \cdot t^{-\frac{h}{2}-k} \, E(x, \overline{c}_{h+2k} \cdot t);\\
        &  \label{BLU (2.4)} |D^{h,k} \gamma_{\zeta}(x,t) - D^{h,k} \gamma_{\zeta'}(x,t)| \leq \overline{c}_{h+2k} \, \omega(d_p(\zeta,\zeta')) \, t^{-\frac{h}{2}-k} \, E(x, \overline{c}_{h+2k} \cdot t),
    \end{align}
    where $\{\overline{c}_q\}_{q \geq 0}$ are positive constants depending only on $\Lambda, \delta$ and $\mathbb{G}$.
\end{theorem}

\vspace{2mm}

For the proof of (\ref{BLU (2.2)}) and (\ref{BLU (2.3)}) see \cite[Theorem 2.5]{BLU_uniform_gaussian_estimates}, whereas (\ref{BLU (2.4)}) follows by assumption (H2) in view of the following result by Bramanti and Fanciullo:
\begin{align*}
    & |X_{i_1} ... X_{i_p} \partial_t^q \, \gamma_{\zeta}(x,t) - X_{i_1} ... X_{i_p} \partial_t^q \, \gamma_{\zeta'}(x,t)| \leq
    \\
    & \quad \leq c(p,q) \, ||A(\zeta) - A(\zeta')||_* \, (t-\tau)^{-\frac{p}{2}-q} \, E(x,c't)
\end{align*}
with $||\cdot||_*$ denoting the usual matrix norm (see \cite[Proposition 6.2]{BMO_estimates_Bramanti}).

\vspace{2mm}

We point out that (\ref{BLU (2.2)}) actually means that the \say{heat kernel} $E(x,t)$ and the parametrix $\gamma_{\zeta}(x,t)$ are interchangeable (for any fixed $\zeta$), up to fixed constants; 
thus, denoted by $\gamma_0$ the parametrix with pole at $\zeta=0$ (i.e., the function satisfying $H \gamma_0=\delta_0$ in distributional sense), we can express (\ref{Dini (1.18)})-(\ref{simil subadditivity})-(\ref{BLU (2.3)})-(\ref{BLU (2.4)}) in terms of $\gamma_0$ instead of $E$, namely (for $x, \, \xi \in \mathbb{R}^n$ and $t>\tau$):
\begin{align}
    & \label{Dini (1.18) with gamma} \omega^{\beta}(d(x,\xi)) \, \gamma_0(\xi^{-1} \circ x, c_0(t-\tau)) \leq c_1 \omega^{\beta}(\sqrt{t-\tau}) \, \gamma_0(\xi^{-1} \circ x, 2c_0(t-\tau));\\
    & \label{simil subadditivity omega with gamma} \omega(d_p((x,t),(\xi,\tau)) \, \gamma(\xi^{-1} \circ x, c_0(t-\tau)) \leq c_2 \omega(2\sqrt{t-\tau}) \, \gamma_0(\xi^{-1} \circ x, 2c_0(t-\tau));\\
    & \label{BLU (2.3) with gamma} |D^{h,k} \gamma_{\zeta}(x,t)| \leq \check{c}_{h+2k} \, t^{-\frac{h}{2}-k} \, \gamma_0(\xi^{-1} \circ x, \check{c}_{h+2k} \cdot t);\\
    & \nonumber |D^{h,k} \gamma_{\zeta}(x,t) - D^{h,k} \gamma_{\zeta'}(x,t)| \leq\\
    & \label{BLU (2.4) with gamma} \quad \leq \check{c}_{h+2k} \, \omega(d_p(\zeta,\zeta')) \, t^{-\frac{h}{2}-k} \, \gamma_0(\xi^{-1} \circ x, \check{c}_{h+2k} \cdot t).
\end{align}
for constants $c_1=c_1(\beta)$, $c_2>0$ and $\{\check{c}_q\}_q = \overline{c}_0\{\overline{c}_{q}\}_q$ which depend only on our data (that is $\Lambda, \delta$ and the structure of $\mathbb{G}$). Moreover, since for all $c_2>c_1>0$
\begin{align*}
    & E(x,c_1t) \leq (c_1t)^{-\frac{Q}{2}} \exp\left\{ -\frac{||x||^2}{c_2 t} \right\} = \left(\frac{c_2}{c_1} \right)^{\frac{Q}{2}} E(x,c_2t),
\end{align*}
then by (\ref{BLU (1.6)})-(\ref{BLU (2.2)}) we get that for any $c_2>c_1>0$ there holds
\begin{align*}
    & \int_{\mathbb{R}^n} \gamma_0(x, c_1 t) \gamma_0(y^{-1} \circ x, c_2 \tau) \, dy \leq \overline{c}_0^2 \int_{\mathbb{R}^n} E(x, \overline{c}_0 c_1 t) E(y^{-1} \circ x, \overline{c}_0 c_2 \tau) \, dy \\
    & \quad \leq \overline{c}_0^2 \left( \frac{c_2}{c_1} \right)^{\frac{Q}{2}} \int_{\mathbb{R}^n} E(x, \overline{c}_0 c_1 t) E(y^{-1} \circ x, \overline{c}_0 c_1 \tau) \, dy \leq \overline{c}_0^{4} \left( \frac{c_2}{c_1} \right)^{\frac{Q}{2}} \times\\
    & \quad \times \int_{\mathbb{R}^n} \gamma_0(x,\overline{c}_0^2 c_1 t) \gamma_0(y^{-1} \circ x, \overline{c_0}^2 c_1 \tau) \, dy = \overline{c}_0^{4} \left( \frac{c_2}{c_1} \right)^{\frac{Q}{2}} \gamma_0(x,\overline{c}_0^2 c_1 (t+\tau)),
\end{align*}
namely for all $c_1,c_2>0$ there exists $c_3, c_4>0$ such that
\begin{align}
\label{BLU (1.6) stronger}
    \int_{\mathbb{R}^n} \gamma_0(x,c_1 t) \gamma_0(y^{-1} \circ x, c_2 \tau) \, dy \leq c_3 \gamma_0(x,c_4(t+\tau)) \quad \forall x \in \mathbb{R}^n; \: t,\, \tau>0.
\end{align}
\\
Here below we summarize some other consequences of Theorems \ref{properties freezed gamma}-\ref{estimate freezed gamma}.

\vspace{2mm}

\begin{corollary}
\label{parametrix is continuous and its 2-derivs has null integral}
    The map $(z;\zeta) \mapsto \gamma_{\zeta}(z;\zeta)$ is continuous in $\Omega=\{z \neq \zeta\}$ and for any $x \in \mathbb{R}^n, \, t>0, \, \zeta \in \mathbb{R}^{n+1}$ the following vanishing property holds:
    \begin{align*}
        & \int_{\mathbb{R}^n} (X_i X_j \gamma_{\zeta})(y \circ x, t
        ) \, dy = 0 \quad \forall 1 \leq i,j \leq m; \quad \int_{\mathbb{R}^n} (\partial_t \gamma_{\zeta})(y \circ x, t) \, dy = 0.
    \end{align*}
\end{corollary}

\vspace{2mm}

\begin{proof}
    Concerning the continuity of $(z,\zeta) \mapsto \gamma_{\zeta}(z;\zeta)$, note that
    \begin{align*}
        |\gamma_{\zeta}(z;\zeta)-\gamma_{\zeta_0}(z_0; \zeta_0)|\leq |\gamma_{\zeta}(z;\zeta)-\gamma_{\zeta_0}(z;\zeta)|+|\gamma_{\zeta_0}(z;\zeta)-\gamma_{\zeta_0}(z_0;\zeta_0)|
    \end{align*}
    for any $(z;\zeta), \, (z_0,\zeta_0) \in \Omega$. As $(z,\zeta) \to (z_0,\zeta_0)$, the two terms vanish by (\ref{BLU (2.4)}) and the continuity of $\gamma_{\zeta_0}$ (see Theorem \ref{properties freezed gamma}), respectively; thus $(z;\zeta) \mapsto \gamma_{\zeta}(z;\zeta)$ is continuous in $(z_0;\zeta_0)$, for whatever $(z_0;\zeta_0) \in \Omega$, as claimed.\\
    As for the vanishing property of the parametrix, left-invariance of the $X_i$'s and dominated convergence (based on the Gaussian estimate (\ref{BLU (2.4)})) yield
    \begin{align*}
        & \int_{\mathbb{R}^n} (X_i X_j \gamma_{\zeta})(y \circ x,t) \, dy= \int_{\mathbb{R}^n} X_i X_j \left( x \mapsto \gamma_{\zeta}(y \circ x,t) \right) \, dy \\
        & \quad =  X_i X_j \int_{\mathbb{R}^n}  \gamma_{\zeta}(y \circ x,t) \, dy=0
    \end{align*}
    in force of (\ref{BLU (1.5)}); analogously, one gets
    \begin{equation*}
        \int_{\mathbb{R}^n} (\partial_t \gamma_{\zeta})(y \circ x,t) \, dy=\partial_t \int_{\mathbb{R}^n } \gamma_{\zeta}(y \circ x,t) \, dy = 0.
    \end{equation*}
\end{proof}

\vspace{2mm}

According to the classical parametrix method, we look for a fundamental solution of $H$ of the form
\begin{equation}
    \label{def Gamma Levi method}
    \Gamma(z; \zeta) = \Gamma_{\zeta}(z; \zeta) + \iint_{\mathbb{R}^n \times (\tau,t)} \Gamma_{\chi}(z; \chi) \, \mu(\chi, \zeta) \, d\chi
\end{equation}
for an unknown kernel function $\mu=\mu(y,\eta; \xi,\tau)$ defined whenever $\eta>\tau$. To get an ansatz for the latter, let us formally apply $H$ to both sides of (\ref{def Gamma Levi method}): differentiating the integral term under the integral sign (for spatial derivatives) or via Leibniz' integral rule (for the $t$-derivative), and denoting
\begin{equation}
\label{definition Z_1}
    Z_1(z; \zeta) := (H \Gamma_{\zeta}(\cdot;\zeta))(z) \quad \forall z \neq \zeta,
\end{equation}
for every $z=(x,t), \, \zeta=(\xi,\tau) \in \mathbb{R}^{n+1}$ with $t>\tau$ we formally get
\begin{align*}
    & 0 = Z_1(z; \zeta) +  \lim_{t \to \tau^+} \int_{\mathbb{R}^n} \Gamma_{\chi}(z; y,\eta) \, \mu(y,\eta;\zeta) \, dy + \iint_{\mathbb{R}^n \times (\tau,t)} Z_1(z; \chi) \, \mu(\chi; \zeta) \, d\chi;
\end{align*}
by point (v) in Theorem \ref{properties freezed gamma}, this formally rewrites as
\begin{equation*}
    0 = Z_1(z; \zeta) + \mu(z; \zeta) + \iint_{\mathbb{R}^n \times (\tau,t)} Z_1(z; \chi) \, \mu(\chi; \zeta) \, d\chi.
\end{equation*}
Seeing $\zeta$ as a fixed parameter in (\ref{def Gamma Levi method}) and defining the integral operator $T$ as
\begin{equation*}
    \phi \mapsto (T \phi)(z) := -\iint_{\mathbb{R}^n \times (\tau,t)} Z_1(z; \chi) \, \phi(\chi) \, d\chi,
\end{equation*}
the above equation becomes
\begin{equation*}
    -Z_1(z;\zeta) = [(I - T) \, \mu(\cdot;\zeta)](z),
\end{equation*}
hence the theory of Neumann series formally gives
\begin{equation*}
    \mu(z;\zeta) = -[(I-T)^{-1} Z_1(\cdot;\zeta)](\zeta) = -\left[\sum_{j=0}^{\infty} T^{j} Z_1(\cdot;\zeta) \right](z).
\end{equation*}
Summarizing these (formal) computations, we may expect $H$ to admit a fundamental solution expressed in the form (\ref{def Gamma Levi method}), with the kernel $\mu$ assigned by
\begin{align}
    & \label{definition mu}
    \mu(z; \zeta) := -\sum_{j=1}^{\infty} Z_j(z; \zeta) \quad \forall z, \,\zeta \in \mathbb{R}^{n+1} \: \text{with} \: t>\tau
\end{align}
where, for $j \geq 2$, the sequence $\{Z_j(z;\zeta)\}_j$ is defined recursively as
\begin{align*}
    & Z_j(\cdot;\zeta) := T^{j-1} Z_1(\cdot;\zeta) = -\iint_{\mathbb{R}^n \times (\tau,t)} Z_{1}(\cdot;\chi) Z_{j-1}(\chi;\zeta) \, d\chi.
\end{align*}
The rest of this section is devoted to justify in a rigorous way this intuition.

\subsection{Properties of the kernel $\mu$}
\label{subsec 3.2}

First of all we show the convergence of the series (\ref{definition mu}) in the set $\{t>\tau\}$, namely that $\mu$ is well defined therein. To this aim, we introduce the auxiliary sequence
\begin{align*}
    & Z_1^{\lambda}(z;\zeta):=e^{-\lambda(t-\tau)} Z_1^{\lambda}(z;\zeta),\\
    & Z_j^{\lambda}(z;\zeta)=T^{j-1} Z_1^{\lambda}(z;\zeta) = -\iint_{\mathbb{R}^n \times (\tau,t)} Z_1^{\lambda}(z;\chi) Z_{j-1}^{\lambda}(\chi;\zeta) \, d\chi
\end{align*}
for $\lambda>0$ to be chosen later; this is just a \say{shrinked} version of our sequence $\{Z_j\}_j$, as it can be easily shown that $Z_j^{\lambda}(z;\zeta)=e^{-\lambda(t-\tau)} Z_j(z;\zeta)$ for all $j \geq 1$. As already done in the parabolic case \cite{Levi_Dini_parabolico}, the strategy is to find suitable upper bounds for the $Z_j^{\lambda}$'s, and then choose $\lambda$ large enough to ensure convergence of the bounding series. Due to the recursive definition of the $Z_j^{\lambda}$'s, this can be done by induction on $j \geq 1$; as for $j=1$, by Theorem \ref{properties freezed gamma} we have
\begin{align}
\label{repr formula Z_1}
    Z_1(\cdot;\zeta)=(H-H_{\zeta}) \Gamma_{\zeta}(\cdot;\zeta)=\sum_{i,j=1}^m (a_{ij}(\cdot)-a_{ij}(\zeta)) \, X_i^x X_j^x \Gamma_{\zeta}(\cdot;\zeta),
\end{align}
and thus (\ref{Dini continuity a_ij}), (\ref{simil subadditivity omega with gamma}) and (\ref{BLU (2.3) with gamma}) yield (for $x, \, \xi \in \mathbb{R}^n, \, t>\tau$)
\begin{align}
\label{estimate Z_1^lambda}
    |Z_1^{\lambda}(z;\zeta)| \leq \check{c}_2 \frac{\omega(2 \sqrt{t-\tau})}{t-\tau} \, \gamma_0(\xi^{-1} \circ x, \check{c}_2(t-\tau)) \, e^{-\lambda(t-\tau)}.
\end{align}

\vspace{2mm}

\begin{remark}
    We point out that in the corresponding Hölder case \cite{BLU_Levi_ultraparabolic} there is no need for this \say{$\lambda$-shrink}, as the upper estimates one gets directly for $\{Z_j\}$ are already good enough to ensure convergence of the series. This is not the case for double Dini coefficients, as the lack of regularity weakens the estimates for the $Z_j^{\lambda}$'s, thus making necessary a proper shrink of the sequence.
\end{remark}

\vspace{2mm}

Through the procedure sketched above, we can prove the following:

\vspace{2mm}

\begin{theorem}[Well-definiteness of $\mu$]
\label{well definiteness mu}
For any $T>0$, the series (\ref{definition mu}) totally converges over $\{ T^{-1} \leq t-\tau \leq T\}$; moreover, for every $z, \, \zeta \in \mathbb{R}^{n+1}$ with $0 < t-\tau \leq T$ the following estimate holds true:
\begin{equation}
    \label{estimate mu}
    |\mu(z; \zeta)| \leq 
    2\check{c}_{2} e^{\lambda T/2} \omega(2 \sqrt{t - \tau}) \,
    (t - \tau)^{-1}
    \gamma_0(\xi^{-1} \circ x, \check{c}_{2}(t - \tau)),
\end{equation}
being $\check{c}_{2}>0$ the constant \say{of order 2} in (\ref{BLU (2.3) with gamma})-(\ref{BLU (2.4) with gamma}).
\end{theorem}

\vspace{2mm}

\begin{proof}
    Let us denote $\phi(\lambda,\epsilon) := 
       8 \widetilde{\omega} (2 \epsilon) + 16 \Lambda \lambda^{-1} \epsilon^{-2}$ for any $\epsilon \in (0,1)$ and $\lambda>0$. We prove the following estimates by induction on $j \geq 1$:
    \begin{equation}
    \label{estimate Z_j^lambda}
    |{Z}^{(\lambda)}_j(z; \zeta)| \leq 
    \check{c}_2^j \, [\phi(\lambda, \epsilon)]^{j-1} \, \frac{\omega(2 \sqrt{t-\tau})}{t - \tau} \, \gamma_0(\xi^{-1} \circ x, \check{c}_2(t-\tau))
    \, e^{-\lambda (t-\tau)/2};
\end{equation}
the base case $j=1$ trivially follows from (\ref{estimate Z_1^lambda}). Assuming now the thesis holds true at step $j-1$ (with $j \geq 2$), for any $\epsilon \in (0,1)$ and $\lambda>0$ the above yields
\begin{align}
    & \nonumber |Z_j^{\lambda}(z;\zeta)| \leq \check{c}_2^j \, [\phi(\lambda,\epsilon)]^{j-2} e^{-\lambda(t-\tau)/2} \int_{\tau}^t \frac{\omega(2\sqrt{t-\eta}) \omega(2\sqrt{\eta-\tau})}{(t-\eta)(\eta-\tau)} \times \\
    & \nonumber \quad \times e^{-\lambda(t-\eta)/2} \left(\int_{\mathbb{R}^n} \gamma_0(y^{-1} \circ x, \check{c}_2(t-\eta) \gamma_0(\xi^{-1} \circ y, \check{c}_2(\eta-\tau)) \, dy\right) d\eta \\
    & \label{partial est Z_lambda} \quad = \check{c}_2^j \, [\phi(\lambda,\epsilon)]^{j-2} e^{-\lambda(t-\tau)/2} \gamma_0(\xi^{-1} \circ x, \check{c}_2(t-\tau)) \cdot I(\tau,t),
\end{align}
having set
\begin{equation*}
    I=I(\tau,t) := 
    \int_{\tau}^{t} 
    \frac{\omega(2 \sqrt{t-\eta}) \, \omega(2 \sqrt{\eta - \tau})}
    {(t-\eta) (\eta - \tau)} e^{-\lambda(t-\eta)/2}
     \, d\eta.
\end{equation*}
To suitably estimate $I$, we distinguish two cases:
\begin{itemize}
    \item [(1)]
if $(t - \tau)/2 \leq \epsilon ^2$, we decompose $I$ as:
\begin{equation*}
    I = 
    \left(\, \int_{\tau}^{(t + \tau)/2} + 
    \int_{(t + \tau)/2}^{t} \, \right) \,
    \{ ... \}
     \, d\eta
     = I_1 + I_2.  
\end{equation*}
Since in $I_1$ we have $t-\tau \geq t-\eta \geq (t-\tau)/2$, from (\ref{Dini omega}) we deduce that
\begin{align*}
    & I_1 \leq 2 \,\frac{\omega(2 \sqrt{t - \tau})}{t-\tau}    
    \int_{\tau}^{(t + \tau)/2}
    \frac{\omega(2 \sqrt{\eta - \tau})}
    {\eta - \tau} \, d\eta \leq 4 \, \frac{\omega(2 \sqrt{t-\tau})}{t-\tau} \,
    \widetilde{\omega}(2 \epsilon).
\end{align*}
Similarly, since in $I_2$ there holds $t-\tau \geq \eta-\tau \geq (t-\tau)/2$, we also get
\begin{equation*}
   I_2 \leq 4 \, \frac{\omega(2 \sqrt{t-\tau})}{t-\tau} \,
    \widetilde{\omega}(2 \epsilon). 
\end{equation*} 
\item [(2)]
If instead $(t - \tau)/2 > \epsilon ^2$, then we set: 
\begin{equation*}
    I = \left( \int_{\tau}^{\tau + \epsilon ^2} + 
    \int_{\tau + \epsilon ^2}^{(t+\tau)/2} + \int_{(t+\tau)/2}^{t - \epsilon ^2} + 
    \int_{t - \epsilon ^2}^{t} \right) \{...\} \, d\eta = I'_1 + I'_2 + I'_3 + I'_4.
\end{equation*}
$I'_1$ and $I'_4$ can be treated as we did for $I_1$ and $I_2$, obtaining:
\begin{equation*}
    I'_1 + I'_4 \leq 8 (t-\tau)^{-1} \omega(2 \sqrt{t-\tau}) \,
    \widetilde{\omega}(2 \epsilon).
\end{equation*}
Let us pass to $I'_3$. From (\ref{boundedness omega}) and the fact that $t - \eta > \epsilon^2$ we have
\begin{align*}
    & I'_3 \leq 2 \epsilon^{-2} \, (t-\tau)^{-1} \, \omega(2 \sqrt{t - \tau}) \int_{(t+\tau)/2}^{t - \epsilon^2} \omega(2 \sqrt{\eta - \tau}) \, e^{-\lambda(t-\eta)/2} \, d\eta 
    \\
    & \quad \leq 4 \Lambda \epsilon^{-2} \frac{\omega(2 \sqrt{t - \tau})}{t-\tau} \int_{\tau}^{t} e^{-\lambda(t-\eta)/2} \, d\eta \leq 8 \Lambda \lambda^{-1} \epsilon^{-2} \frac{\omega(2 \sqrt{t - \tau})}{t-\tau}.
\end{align*}
A similar procedure shows the validity of the same estimate also for $I'_2$.
\end{itemize}
Combining the estimates for $I_1, \, I_2$, or $I'_1+I'_4, \, I'_2, \, I'_3$ we get
\begin{align*}
    & I \leq \frac{\omega(2 \sqrt{t - \tau})}{t-\tau} \left[8 \, \widetilde{\omega}(2 \epsilon) + 16 \Lambda \lambda^{-1} \epsilon^{-2} \right]= \frac{\omega(2 \sqrt{t - \tau})}{t-\tau} \, \phi(\lambda, \epsilon)
\end{align*}
for any $\epsilon \in (0,1)$, $\lambda > 0$. Plugging this back into (\ref{partial est Z_lambda}) we get claim (\ref{estimate Z_j^lambda}).

It is then immediate that $Z_j$ is well defined for any $j \in \mathbb{N}$ and there holds
\begin{equation}
\label{est Z_j with lambda}
    |{Z}_j(z; \zeta)| \leq 
    \check{c}_2^j \, [\phi(\lambda, \epsilon)]^{j-1} \, \frac{\omega(2 \sqrt{t-\tau})}{t - \tau} \, \gamma_0(\xi^{-1} \circ x, \check{c}_2(t-\tau)) \, e^{\lambda(t-\tau)/2}
\end{equation}
for any $\epsilon \in (0,1)$ and $\lambda>0$. It follows that
\begin{align*}
    & \sum_{j=1}^{\infty} |{Z}_j(z; \zeta)| \leq \check{c}_2 \, \frac{\omega(2 \sqrt{t - \tau})}{t - \tau} \, \gamma_0(\xi^{-1} \circ x, \check{c}_2(t - \tau)) \, 
    e^{\lambda (t - \tau)/2} \sum_{j=0}^{\infty} \left[ \check{c}_2 \phi(\lambda, \epsilon) \right]^j.
\end{align*}
Recalling $\phi(\lambda,\epsilon)=8\widetilde{\omega}(2 \epsilon)+16\Lambda \lambda^{-1} \epsilon^{-2}$, we eventually fix $\epsilon = \overline{\epsilon}$ small enough to have $\widetilde{\omega}(2 \overline{\epsilon})<(32\check{c}_2)^{-1}$ and then $\lambda = \overline{\lambda}>64\Lambda (\check{c}_2 \overline{\epsilon}^2)^{-1}$, so that $\check{c}_2 \phi(\overline{\lambda},\overline{\epsilon})<1/2$; these values yield the convergence of the geometric series at the far right hand side (to a value not greater than $2$), leading us to
\begin{align*}
    & \sum_{j=1}^{\infty} |{Z}_j(z; \zeta)| \leq 2\check{c}_2 \, \frac{\omega(2 \sqrt{t - \tau})}{t - \tau} \, \gamma_0(\xi^{-1} \circ x, \check{c}_2(t - \tau)) \, 
    e^{\overline{\lambda} (t - \tau)/2}
    \\
    & \quad \leq 2\check{c}_2 \, \frac{\omega(2 \sqrt{t - \tau})}{t - \tau} \, \gamma_0(\xi^{-1} \circ x, \check{c}_2(t - \tau)) \, 
    e^{\overline{\lambda} T/2}
\end{align*}
for all $z=(x,t), \, \zeta=(\xi,\tau) \in \mathbb{R}^{n+1}$ with $0<t-\tau \leq T$. The convergence is total in any subset $\{T^{-1} \leq t-\tau \leq T\}$ in view of the boundedness of the function
\begin{equation*}
    (x,t;\xi,\tau) \mapsto \frac{\omega(2\sqrt{t-\tau})}{t-\tau} \, \gamma_0(\xi^{-1} \circ x, \check{c}_2(t-\tau))
\end{equation*}
therein.
\end{proof}

\vspace{2mm}

To remove the \say{$\lambda$-shrink} introduced above from the estimate (\ref{est Z_j with lambda}) for $Z_j$, we fix $\lambda$ to the value $\overline{\lambda}$ of the previous proof, obtaining
\begin{equation}
\label{est Z_j without lambda}
    |Z_j(z; \zeta)| \leq \check{c}_2 e^{\overline{\lambda}T/2} \, (t-\tau)^{-1} \omega(2 \sqrt{t-\tau}) \, \gamma_0(\xi^{-1} \circ x, \check{c}_2(t-\tau))
\end{equation}
for every $z, \, \zeta \in \mathbb{R}^{n+1}$ with $0 < t-\tau \leq T$.

\vspace{2mm}

\begin{corollary}
\label{eq satisfies by mu}
   For every $z, \, \zeta \in \mathbb{R}^{n+1}$ with $t>\tau$ we have
   \begin{equation*}
       {Z}_1(z; \zeta) + 
       {\mu}(z; \zeta) +
       \iint_{\mathbb{R}^n \times (\tau,t)}
       {Z}_1(z; \chi) \,
       {\mu}(\chi; \zeta) 
       \, d\chi
       = 0.
   \end{equation*}
\end{corollary}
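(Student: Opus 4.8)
The plan is to make the formal Neumann-series computation rigorous. Fix $z,\zeta\in\mathbb{R}^{n+1}$ with $t>\tau$ and set $T:=t-\tau$. Plugging the definition (\ref{definition mu}) of $\mu$ into the integral term, the identity to be proved is equivalent to
\[
\int_{\mathbb{R}^n\times(\tau,t)} Z_1(z;\chi)\,\mu(\chi;\zeta)\,d\chi
=\sum_{j=1}^{\infty}\int_{\mathbb{R}^n\times(\tau,t)} Z_1(z;\chi)\,Z_j(\chi;\zeta)\,d\chi
=\sum_{j=1}^{\infty}Z_{j+1}(z;\zeta)=\mu(z;\zeta)-Z_1(z;\zeta),
\]
where the middle equality is just the recursive definition (\ref{definition Z_j}) and the last one is a reindexing via (\ref{definition mu}); rearranging gives the corollary. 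So the whole point is to justify exchanging the sum and the integral, and for this I need an integrable majorant.

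First I would record two pointwise bounds. From (\ref{estimate Z_1}), writing $\chi=(y,\eta)$, one has $|Z_1(z;\chi)|\le C\,\omega(c_1\sqrt{t-\eta})\,(t-\eta)^{-1}E(y^{-1}\circ x,c(t-\eta))$ for $\eta<t$. On the other side, evaluating (\ref{est Z_j with lambda}) at the parameters $\overline\epsilon,\overline\lambda$ selected in the proof of Theorem \ref{well definiteness mu} (so that $C\,\phi(\overline\lambda,\overline\epsilon)<1$) and summing the resulting geometric series yields, for $0<\eta-\tau\le T$,
\[
\Phi(\chi):=\sum_{j=1}^{\infty}|Z_j(\chi;\zeta)|\;\le\; C(T)\,\omega(c_1\sqrt{\eta-\tau})\,(\eta-\tau)^{-1}\,E(\xi^{-1}\circ y,c(\eta-\tau)),
\]
which is exactly the estimate underlying (\ref{estimate mu}); moreover the partial sums $S_N(\chi;\zeta):=\sum_{j=1}^N Z_j(\chi;\zeta)$ converge to $\mu(\chi;\zeta)$ for each such $\chi$. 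The key step is then to show that $h(\chi):=|Z_1(z;\chi)|\,\Phi(\chi)$ lies in $L^1(\mathbb{R}^n\times(\tau,t))$. Multiplying the two bounds, $h(\chi)$ is dominated by a product of two Gaussians times $\omega(c_1\sqrt{t-\eta})(t-\eta)^{-1}\cdot\omega(c_1\sqrt{\eta-\tau})(\eta-\tau)^{-1}$; integrating first in $y$ and invoking the convolution identity (\ref{BLU (1.6)}) together with the two-sided Gaussian bound (\ref{BLU (2.2)}) collapses the two Gaussians into a single Gaussian in $\xi^{-1}\circ x$ of width $c'(t-\tau)$, whose $y$-integral is finite. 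What is left is the time integral $\int_\tau^t \omega(c_1\sqrt{t-\eta})\,\omega(c_1\sqrt{\eta-\tau})\,(t-\eta)^{-1}(\eta-\tau)^{-1}\,d\eta$; splitting at $\eta=(t+\tau)/2$ and using the monotonicity of $\omega$ on the bounded-below factor of each half bounds it by $C\,\omega(c_1\sqrt{t-\tau})\,(t-\tau)^{-1}\,\widetilde{\omega}(c_1\sqrt{t-\tau})<+\infty$ by the Dini condition (\ref{Dini omega}) — this is precisely the $j=2$ instance of the $\lambda=0$ computation recalled in Remark \ref{rmk on the lambda correction}. Hence $h\in L^1$.

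Finally I would apply dominated convergence to the sequence $\chi\mapsto Z_1(z;\chi)\,S_N(\chi;\zeta)$: it converges pointwise to $Z_1(z;\chi)\,\mu(\chi;\zeta)$ and is dominated by $h$, so $\int Z_1(z;\chi)\,S_N(\chi;\zeta)\,d\chi\to\int Z_1(z;\chi)\,\mu(\chi;\zeta)\,d\chi$. But by (\ref{definition Z_j}) the left-hand side equals $\sum_{j=1}^{N}Z_{j+1}(z;\zeta)=S_{N+1}(z;\zeta)-Z_1(z;\zeta)$, which by (\ref{definition mu}) tends to $\mu(z;\zeta)-Z_1(z;\zeta)$. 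Equating the two limits and rearranging proves the statement. I expect the only genuine difficulty to be the integrability of $h$, i.e. keeping the Gaussian bookkeeping clean while extracting the finite time integral via the Dini condition; the rest is a routine bounded-convergence argument. One minor point worth a sentence: since the two time-singular factors are constants in $y$, the inner $y$-integral is finite for each fixed $\eta\in(\tau,t)$, which legitimizes organizing the estimate of $h$ by Tonelli before the dominated-convergence step.
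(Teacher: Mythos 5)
Your proposal is correct and follows essentially the same route as the paper: interchange the sum defining $\mu$ with the integral, then use the recursion (\ref{definition Z_j}) to telescope $\sum_j Z_{j+1} = \mu - Z_1$. The only difference is that the paper justifies the interchange with the single phrase ``by the total convergence of the series,'' whereas you supply the dominated-convergence argument with an explicit $L^1$ majorant built from (\ref{estimate Z_1}), (\ref{estimate mu}), the reproduction property (\ref{BLU (1.6)}) and the midpoint splitting of the time integral — a worthwhile extra detail, since the singular behaviour of the bounds as $\eta\to\tau^+$ and $\eta\to t^-$ is not covered by uniform convergence on $\{T^{-1}\le t-\tau\le T\}$ alone.
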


\vspace{2mm}

\begin{proof}
    By the total convergence of the series (\ref{definition mu}) we have:
    \begin{align*}
    & -\iint_{\mathbb{R}^n \times (\tau,t)}
    {Z}_1(z; \chi) \, 
    \mu(\chi; \zeta) 
    \, d\chi = \iint_{\mathbb{R}^n \times (\tau,t)}
    {Z}_1(z; \chi) \, \sum_{j=1}^{\infty} 
    {Z}_j(\chi; \zeta) 
    \, d\chi 
    \\
    & \quad = \sum_{j=1}^{\infty} \iint_{\mathbb{R}^n \times (\tau,t)}
    {Z}_1(z; \chi) \, {Z}_j(\chi; \zeta) 
    \, d\chi = -\sum_{j=1}^{\infty} {Z}_{j+1}(z; \zeta) = {\mu}(z; \zeta) +
    {Z}_1(z; \zeta).
    \end{align*}
\end{proof}

In order to analyze the integral term in (\ref{def Gamma Levi method}), we need to combine the Gaussian estimate (\ref{estimate mu}) with suitable regularity results for $\mu$; what we need is $\mu(z; \zeta)$ continuous for $z \neq \zeta$ and Dini continuous w.r.t. $x \in \mathbb{R}^n$ (for any fixed $\xi \in \mathbb{R}^n, \, t > \tau$). This will be the content of the following two results.

\vspace{2mm}

\begin{proposition}[Continuity of $\mu$]
\label{continuity of mu}
    $\mu(x,t; \xi,\tau)$ is continuous in $\{t>\tau\}$.
\end{proposition}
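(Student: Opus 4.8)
The plan is to show that $\mu = \sum_{j=1}^\infty Z_j$ is continuous on $\{t>\tau\}$ by establishing that each $Z_j$ is continuous there and that the series converges locally uniformly, so that the uniform limit inherits continuity. The local uniform convergence is essentially already in hand: by Theorem \ref{well definiteness mu} the series totally converges on every strip $\{T^{-1} \le t-\tau \le T\}$, and together with the pointwise bound \eqref{estimate mu} (which stays bounded on compact subsets of $\{t>\tau\}$ that also stay away from $t-\tau=0$) this gives uniform convergence on compact subsets of $\{(z;\zeta): t>\tau,\ z\ne\zeta\}$; one must also treat compact sets containing diagonal points $z=\zeta$, but there $t>\tau$ forces $t-\tau$ bounded below and $\gamma_0(\xi^{-1}\circ x, c'(t-\tau))$ stays bounded, so \eqref{est Z_j without lambda} gives a uniform geometric bound as well. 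Hence it suffices to prove $Z_j \in C(\{t>\tau\})$ for every $j$.

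For $j=1$, continuity of $Z_1(z;\zeta)=(H-H_\zeta)\Gamma_\zeta(z;\zeta)=\sum_{i,j}[a_{ij}(z)-a_{ij}(\zeta)]X_iX_j\gamma_\zeta(\xi^{-1}\circ x,t-\tau)$ follows from: continuity of the $a_{ij}$ (assumption \eqref{Dini continuity a_ij}); smoothness of $\gamma_\zeta$ in its own arguments (Theorem \ref{properties freezed gamma}); and continuity of $\zeta\mapsto X_iX_j\gamma_\zeta(\cdot)$, which is exactly the content of estimate \eqref{BLU (2.4)} with $p=2,q=0$ (the same argument used in Remark \ref{continuity of freezed Gamma}). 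For the inductive step one writes $Z_{j+1}(z;\zeta)=\int_{\mathbb{R}^n\times(\tau,t)}Z_1(z;\chi)Z_j(\chi;\zeta)\,d\chi$ and wants to pass the limit $(z;\zeta)\to(z_0;\zeta_0)$ under the integral sign. The natural tool is dominated convergence, but the domain $\mathbb{R}^n\times(\tau,t)$ depends on $(z;\zeta)$ through both $\tau$ and $t$, so I would first fix this by a change of variables or by extending the integrand by zero outside the $\eta$-range $(\tau,t)$ and checking that the resulting integrand is continuous in $(z;\zeta,\chi)$ for a.e.\ $\chi$ — the only problematic $\chi$ are those with $\eta=\tau$, $\eta=t$, or $\chi$ on the relevant diagonals, a null set. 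A uniform integrable majorant near $(z_0;\zeta_0)$ is produced by combining \eqref{estimate Z_1} for $Z_1(z;\chi)$, \eqref{est Z_j without lambda} for $Z_j(\chi;\zeta)$, the semigroup identity \eqref{BLU (1.6)} to carry out the $y$-integration, and the convergence of the time integral $\int_\tau^t \frac{\omega(c_1\sqrt{t-\eta})\omega(c_1\sqrt{\eta-\tau})}{(t-\eta)(\eta-\tau)}\,d\eta$ guaranteed by the Dini condition \eqref{Dini omega} — this is the same computation, without the $\lambda$-factor, that appears in the proof of Theorem \ref{well definiteness mu}. One should take the majorant over a small closed neighborhood of $(z_0;\zeta_0)$ contained in $\{t>\tau\}$, on which $t-\tau$ is bounded above and below.

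The main obstacle is the $(z;\zeta)$-dependence of the domain of integration together with the mild (only Dini, not Hölder) singularity of the integrand in the time variable: one cannot simply quote a fixed dominating function on a fixed domain, and must argue with some care that shrinking to a neighborhood of $(z_0;\zeta_0)$ produces both a common domain (after extending by zero) and a common integrable majorant, while the integrand converges pointwise a.e.\ by the continuity of $Z_1$ and the inductive hypothesis on $Z_j$. Once the dominated convergence argument is set up on such a neighborhood, continuity of $Z_{j+1}$ at the arbitrary point $(z_0;\zeta_0)$ with $t_0>\tau_0$ follows, closing the induction; the local uniform convergence of $\sum_j Z_j$ then yields continuity of $\mu$ on $\{t>\tau\}$.
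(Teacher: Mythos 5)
Your overall architecture (locally uniform convergence of the series reduces the problem to continuity of each $Z_j$; $j=1$ by hand; induction for $j+1$) is the same as the paper's, and your treatment of $j=1$ and of the convergence of the series is fine. The gap is in the inductive step: the single integrable majorant you claim to produce does not exist. The bounds \eqref{estimate Z_1} and \eqref{est Z_j without lambda} control the integrand at each \emph{fixed} $(z;\zeta)$, and the semigroup identity \eqref{BLU (1.6)} lets you evaluate the $y$-integral only for that fixed choice of $x,\xi,t,\tau$; neither gives a function of $\chi=(y,\eta)$ alone dominating the integrand for all $(z;\zeta)$ in a neighborhood $N$ of $(z_0;\zeta_0)$. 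Indeed, for $\eta$ slightly below $t_0$ and $y$ near $x_0$ there are admissible $(x,t)\in N$ with $t-\eta$ arbitrarily small, and
\begin{equation*}
\sup_{(z;\zeta)\in N}\ \frac{\omega(c_1\sqrt{t-\eta})}{t-\eta}\,\gamma_0(y^{-1}\circ x, c(t-\eta))
\ \gtrsim\ \sup_{s>0}\ \frac{\omega(c_1\sqrt{s})}{s}\, s^{-Q/2}\exp\Bigl\{-\tfrac{d(x_0,y)^2}{c\,s}\Bigr\}
\ \gtrsim\ \omega(c_1 d(x_0,y))\, d(x_0,y)^{-Q-2},
\end{equation*}
which is not integrable in $y$ near $x_0$. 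So the supremum-over-the-neighborhood majorant fails precisely at the moving singularities $\eta=t$ and $\eta=\tau$, and "extending by zero plus dominated convergence" cannot be carried out as described.

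The missing idea is to replace domination by a uniform-smallness (equi-integrability) argument at the two endpoints. The paper introduces the truncated integrals
\begin{equation*}
Z_{j+1}^{\sigma}(z;\zeta)=\int_{\mathbb{R}^n\times(\tau+\sigma,\,t-\sigma)}Z_1(z;\chi)\,Z_j(\chi;\zeta)\,d\chi ,
\end{equation*}
for which dominated convergence is legitimate because $t-\eta\ge\sigma$ and $\eta-\tau\ge\sigma$ on the domain, so each $Z_{j+1}^{\sigma}$ is continuous; it then shows $\sup_K|Z_{j+1}-Z_{j+1}^{\sigma}|\to0$ as $\sigma\to0^+$ on every compact $K\subset\subset\{t>\tau\}$, using \eqref{BLU (1.6)}, \eqref{BLU (2.2)}, \eqref{est Z_j without lambda} and the Dini condition \eqref{Dini omega} to bound the contribution of $I_\sigma=(\tau,\tau+\sigma)\cup(t-\sigma,t)$ by $C(j,K)\,\widetilde{\omega}(c_1\sqrt{\sigma})$ (plus bounded factors). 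Continuity of $Z_{j+1}$ then follows as a uniform limit of continuous functions. You should restructure your inductive step along these lines (or, equivalently, invoke a Vitali-type convergence theorem after verifying uniform integrability of the family of integrands over $N$, which amounts to the same endpoint estimate); as written, the dominated-convergence step would fail.
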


\vspace{2mm}

\begin{proof}
  Thanks to the uniform convergence of the series (\ref{definition mu}) (see Proposition \ref{well definiteness mu}) it is sufficient to prove the continuity of $Z_j$ for every $j$. If $j=1$ the claim trivially holds by Theorem \ref{properties freezed gamma} and the assumption (H2). Let us then show that the continuity of $Z_j$ implies that of $Z_{j+1}$, so that the thesis follows by induction; to this aim, we prove that the latter is a uniform limit of the functions
  \begin{equation*}
    Z_{j+1}^{\sigma}(z; \zeta) := -\iint_{\mathbb{R}^n \times (\tau+\sigma, t-\sigma)} Z_1(z; \chi) Z_j(\chi; \zeta) \, d\chi, \quad z, \zeta \in \mathbb{R}^{n+1} : t>\tau+2\sigma
  \end{equation*}
  as $\sigma \to 0^+$, in every compact subset of $\{t>\tau\}$.\\
  Note that $Z_{j+1}^{\sigma}$ is continuous by dominated convergence (based on (\ref{est Z_j without lambda})), hence the validity of the above claim is sufficient to get the continuity of $Z_{j+1}$ in the set $\{t>\tau\}$. Given $K \Subset \{t>\tau\}$, denoting 
  $I_{\sigma} := (\tau, \tau + \sigma) \cup (t - \sigma, t)$ and exploiting (\ref{BLU (1.6)}) and (\ref{est Z_j without lambda}) we obtain
  \begin{align*}
      & \sup_K \left|Z_{j+1} - Z_{j+1}^{\sigma} \right| 
      \leq \sup_{(z;\zeta) \in K}  \iint_{\mathbb{R}^n \times I_{\sigma}} \left|
      Z_1(z; \chi) \, Z_j(\chi; \zeta) \right| \, d\chi 
      \\
      & \quad \leq c(K) \sup_{(z;\zeta) \in K} \int_{I_{\sigma}} \frac{\omega (2 \sqrt{t - \eta}) \, \omega(2 \sqrt{\eta - \tau})}{(t - \eta)(\eta-\tau)} \, d\eta \int_{\mathbb{R}^n} \gamma_0(y^{-1} \circ x, \check{c}_2(t - \eta)) \times
      \\
      & \quad \times \gamma_0(\xi^{-1} \circ y, \check{c}_2(\eta - \tau)) \, dy
      \\
      & \quad \leq c(K)
      \sup_{(z;\zeta) \in K} \gamma_0(\xi^{-1} \circ x, \check{c}_2(t - \tau)) \int_{I_{\sigma}} \frac{\omega(2 \sqrt{t - \eta}) \, \omega(2 \sqrt{\eta - \tau})}
      {(t - \eta)(\eta - \tau)} d\eta.
    \end{align*}
    Setting $\rho = \sqrt{\eta-\tau}$ if $\eta \in (\tau, \tau+\sigma)$ or $\rho = \sqrt{t-\eta}$ if $\eta \in (t-\sigma,t)$ we get 
    \begin{equation*}
        \int_{I_{\sigma}} \frac{\omega(2 \sqrt{t-\eta}) \, \omega(2 \sqrt{\eta - \tau})}{(t-\eta)(\eta - \tau)} \, d\eta \leq 8 \, \frac{\omega(2 \sqrt{t-\tau})}{t-\tau} \int_{0}^{\sqrt{\sigma}} \frac{\omega(2 \rho)}{\rho} \, d\rho.
    \end{equation*}
    Therefore, by (\ref{Dini omega}) and the boundedness of $\gamma_0$ in compact subsets of $\{t>\tau\}$:
    \vspace{1mm}
    \begin{align*}
      & \sup_{(z;\zeta) \in K} \left|Z_{j+1}(z;\zeta) - Z_{j+1}^{\sigma}(z;\zeta) \right| \leq c(K) \, \widetilde{\omega}(2 \sqrt{\sigma}) \times 
      \\
      & \quad \times \sup_{(z;\zeta) \in K} \{\gamma_0(\xi^{-1} \circ x, \check{c}_2(t - \tau)) \omega(2 \sqrt{t-\tau})(t-\tau)^{-1} \} \longrightarrow 0 \quad \text{as} \: \sigma \to 0^+.
    \end{align*}
\end{proof}

\vspace{2mm}

\begin{theorem}[Dini continuity of $\mu$ in $x$]
\label{Dini continuity mu}
    Given an arbitrary $T>0$, the following estimate holds for every $x, \, x', \, \xi \in \mathbb{R}^n$, $0 < t - \tau \leq T$:
    \begin{align}
        & |\mu(x,t; \zeta) - 
        \mu(x',t; \zeta)| 
        \leq c(T) \left[ \gamma_0(\xi^{-1} \circ x, \widehat{c}_2(t - \tau)) + \gamma_0(\xi^{-1} \circ x', \widehat{c}_2(t - \tau)) \right] \times
        \nonumber
        \\
        & \label{estimate delta mu}
        \quad \times (t - \tau)^{-1} \left[\omega(2d(x,x')) + 
        \omega (2\sqrt{t - \tau}) \, \widetilde{\omega} (2d(x,x')) \right]
    \end{align}
    for a constant $\widehat{c}_2$ depending only on $\Lambda, \delta, \mathbb{G}$, and $c$ depending also on $T$.
\end{theorem}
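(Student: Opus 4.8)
The plan is to prove, by induction on $j\ge1$, that each $Z_j$ satisfies an $x$-increment bound of the shape of (\ref{estimate delta mu}) with a constant $C(j,T)$ summable in $j$, and then to pass to the limit via the total convergence of (\ref{definition mu}) from Theorem~\ref{well definiteness mu}; as there, it is convenient to keep the auxiliary factor $e^{-\lambda(t-\eta)}$ on the $Z_1$-side throughout the iteration, so that the relevant time integrals produce the small quantities $\phi(\lambda,\epsilon)$ and the induction constants stay geometric. A preliminary reduction disposes of $d(x,x')\ge\sqrt{t-\tau}$: there $|Z_j(x,t;\zeta)-Z_j(x',t;\zeta)|\le|Z_j(x,t;\zeta)|+|Z_j(x',t;\zeta)|$ is estimated by (\ref{est Z_j without lambda}), and monotonicity of $\omega$ turns $\omega(c_1\sqrt{t-\tau})$ into $\omega(c_1 d(x,x'))$, which fits the first term of the target. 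So I assume $d(x,x')<\sqrt{t-\tau}$ and abbreviate $E:=E(\xi^{-1}\circ x,c(t-\tau))$, $E':=E(\xi^{-1}\circ x',c(t-\tau))$.

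For the base case $j=1$, from (\ref{repr formula Z_1}) I split $Z_1(x,t;\zeta)-Z_1(x',t;\zeta)$ into a \emph{coefficient part}, in which only the difference $a_{ij}(x,t)-a_{ij}(x',t)$ depends on $(x,x')$ and $X_iX_j\gamma_\zeta$ sits at $\xi^{-1}\circ x'$, and a \emph{kernel part}, in which $a_{ij}(x,t)-a_{ij}(\zeta)$ multiplies $X_iX_j\gamma_\zeta(\xi^{-1}\circ x,\cdot)-X_iX_j\gamma_\zeta(\xi^{-1}\circ x',\cdot)$. The coefficient part is bounded immediately by (\ref{Dini continuity a_ij}) and (\ref{BLU (2.3)}) (with $p=2$), giving the $\omega(c_1 d(x,x'))$ term. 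For the kernel part I use a mean-value inequality on $\mathbb{G}$ along horizontal curves joining $\xi^{-1}\circ x$ and $\xi^{-1}\circ x'=(\xi^{-1}\circ x)\circ(x^{-1}\circ x')$ (length $\sim d(x,x')$), combined with (\ref{BLU (2.3)}) for $p=3$ and then (\ref{simil subadditivity}); this produces the factor $d(x,x')/\sqrt{t-\tau}$, and $\omega(c_1\sqrt{t-\tau})\,d(x,x')/\sqrt{t-\tau}\le C\,\omega(c_1 d(x,x'))$ by (\ref{Dini (1.2)}) since $(d(x,x')/\sqrt{t-\tau})^{1-\delta}\le1$. Hence $Z_1$ satisfies (\ref{estimate delta mu}) with only the first term present — the $\widetilde\omega$-term is created by the iteration.

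For the inductive step I write, using (\ref{definition Z_j}), $Z_j(x,t;\zeta)-Z_j(x',t;\zeta)=\int_\tau^t d\eta\int_{\mathbb{R}^n}\big[Z_1(x,t;y,\eta)-Z_1(x',t;y,\eta)\big]Z_{j-1}(y,\eta;\zeta)\,dy$, split the inner increment into the analogous coefficient part $P_a$ (with $\gamma_{(y,\eta)}$ at $y^{-1}\circ x'$) and kernel part $P_b$, and split the $\eta$-integral over $(\tau,\tfrac{t+\tau}{2})$, $(\tfrac{t+\tau}{2},t-d(x,x')^2)$ and $(t-d(x,x')^2,t)$. On $(\tau,\tfrac{t+\tau}{2})$ the weight $(t-\eta)^{-1}$ is inactive: crude bounds on $P_a,P_b$, the bound (\ref{est Z_j without lambda}) on $Z_{j-1}$, the reproduction identity (\ref{BLU (1.6)})--(\ref{BLU (2.2)}) to collapse the $y$-integral, and the Dini-integrability of $(\eta-\tau)^{-1}\omega(c_1\sqrt{\eta-\tau})$ give a contribution $\le C(T)\,\omega(c_1 d(x,x'))\,(t-\tau)^{-1}(E+E')$. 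On $(t-d(x,x')^2,t)$ I use $|\textrm{inner increment}|\le|Z_1(x,t;y,\eta)|+|Z_1(x',t;y,\eta)|\le C\,\omega(c_1\sqrt{t-\eta})(t-\eta)^{-1}(\cdots)$, collapse the $y$-integral, and compute (with $\rho=\sqrt{t-\eta}$) $\int_{t-d(x,x')^2}^{t}(t-\eta)^{-1}\omega(c_1\sqrt{t-\eta})\,d\eta=2\,\widetilde\omega(c_1 d(x,x'))$: this is where the term $\omega(c_2\sqrt{t-\tau})\,\widetilde\omega(c_3 d(x,x'))$ appears. On the middle interval $\eta-\tau\ge\tfrac{t-\tau}{2}$, so $E(\cdot,c(\eta-\tau))\le C\,E(\cdot,c(t-\tau))$; there, $P_b$ is handled via the mean-value inequality and (\ref{BLU (2.3)}) ($p=3$), which furnish the Lipschitz factor $d(x,x')/\sqrt{t-\eta}$ and make the $\eta$-integral $\le C\,d(x,x')\int_{d(x,x')}^{\sqrt{t-\tau}}\omega(c_1\rho)\rho^{-2}\,d\rho\le C\,\omega(c_1 d(x,x'))$ by (\ref{Dini (1.2)}). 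For $P_a$ I first replace the moving freezing point $(y,\eta)$ by the fixed one $(x',\eta)$ — the error being $\le C\,\omega(d(y,x'))\,(t-\eta)^{-1}E(y^{-1}\circ x',c(t-\eta))$ by (\ref{BLU (2.4)}) — and then, on the fixed-freezing-point remainder, use $\int_{\mathbb{R}^n}X_iX_j\gamma_{(x',\eta)}(v,t-\eta)\,dv=0$ (valid because each $X_i$ is left-invariant on the nilpotent group $\mathbb{G}$, so $\int_{\mathbb{R}^n}X_ig\,dx=0$ for fast-decaying $g$) to subtract the $y$-independent quantity $Z_{j-1}(x',\eta;\zeta)$ and invoke the inductive $x$-increment bound on $Z_{j-1}$, which again carries a factor $\omega(d(y,x'))$. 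In both pieces this $\omega(d(y,x'))$, integrated against the Gaussian $E(y^{-1}\circ x',c(t-\eta))$ coming from the bound on $X_iX_j\gamma$, yields an extra $\omega(c_1\sqrt{t-\eta})$, so that $\int_{(t+\tau)/2}^{t-d(x,x')^2}(t-\eta)^{-1}\omega(c_1\sqrt{t-\eta})\,e^{-\lambda(t-\eta)/2}\,d\eta$ converges and is controlled by $\phi(\lambda,\epsilon)$. Collecting the three regions, the $Z_j$-increment inherits a constant $\sim C\,\phi(\lambda,\epsilon)\,C(j-1,T)$; with the choice of $\epsilon,\lambda$ from Theorem~\ref{well definiteness mu} the series converges, and summing over $j$ gives (\ref{estimate delta mu}).

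I expect the $P_a$-term on the middle interval to be the main obstacle: bounded directly it produces the factor $\omega(d(x,x'))\,\log\!\big(\sqrt{t-\tau}/d(x,x')\big)$, which the target modulus cannot absorb — indeed $\widetilde\omega\le C\,\omega$ (Section~\ref{sec: properties omega}), so the right-hand side of (\ref{estimate delta mu}) is $\le C\,\omega(c\,d(x,x'))$ and cannot dominate an $\omega\cdot\log$. The way out is to trade the offending $(t-\eta)^{-1}$ for the Dini-integrable $(t-\eta)^{-1}\omega(c_1\sqrt{t-\eta})$ via the combination of (\ref{BLU (2.4)}) and the vanishing integral $\int X_iX_j\gamma_{(x',\eta)}\,dv=0$ — morally the space-variable counterpart of the $\lambda$-correction of Remark~\ref{rmk on the lambda correction}. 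It is here that the \emph{double} Dini assumption is essential rather than merely convenient: it guarantees that $\widetilde\omega$ is itself a Dini modulus, $\int_0^T\widetilde\omega(r)\,r^{-1}\,dr=\widetilde{\widetilde\omega}(T)<+\infty$, so that $r\mapsto\omega(c_1 r)+\omega(c_2\sqrt{t-\tau})\,\widetilde\omega(c_3 r)$ is a bona fide modulus of continuity — exactly the content of "$\mu$ is \emph{Dini} continuous in $x$". A routine but indispensable ingredient throughout is the mean-value/horizontal-path inequality controlling the $x$-increment of $X_iX_j\gamma_\chi$ by $d(x,x')$ times $\sup|X_kX_iX_j\gamma_\chi|$ over a $\sqrt{t-\tau}$-neighbourhood, together with the comparability of the Gaussian weight along such curves.
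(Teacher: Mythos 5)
Your route is essentially sound but genuinely different from the paper's. The paper does not induct over the Neumann series: it works directly with the Volterra identity of Corollary \ref{eq satisfies by mu}, writing $\Delta_{(x,x')}\mu=\Delta_{(x,x')}Z_1+\int\Delta_{(x,x')}Z_1(\cdot,t;\chi)\,\mu(\chi;\zeta)\,d\chi$, so that the dangerous coefficient-part of the middle time interval (your $P_a$, the paper's $F_2^1$) is \emph{self-referential}: after the cancellation $\int X_iX_j\gamma_{(x',\eta)}\,dy=0$ plus (\ref{BLU (2.4)}), the $V_2$-type term needs the very increment estimate one is proving. The paper breaks this circularity by a bootstrap: it first derives a weaker bound (\ref{first estimate delta mu}) with $\omega^{1/2-\alpha}(d(x,x'))$, using the interpolation $\omega(d)\le\omega^{1/2-\alpha}(d)\,\omega^{1/2+\alpha}(\sqrt{t-\eta})$ together with Lemma \ref{Dini lemma 1.1}, then feeds it back into $V_2$ to get exponent $1-2\alpha$, and iterates once more to reach exponent $1$. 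Your induction avoids the circularity entirely, since $\Delta_{(x,x')}Z_{j-1}$ is already known; all the other ingredients (the $K_1/K_2$ split of $\Delta Z_1$, Lemma \ref{UGE lemma 7.6} with third-order derivative bounds, the conversion $d(x,x')\,\omega(c\sqrt{t-\tau})(t-\tau)^{-3/2}\le C\,\omega(c\,d(x,x'))(t-\tau)^{-1}$ via (\ref{Dini (1.2)}), the three-region time splitting, and the cancellation lemma \ref{derivs of freezed fund sol have vanishing integral} combined with (\ref{BLU (2.4)}) to defuse the logarithm) coincide with the paper's, and your base case matches (\ref{estimate delta Z_1}).

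What each approach buys: the paper's bootstrap needs only the already-proved magnitude bound (\ref{estimate mu}) on $\mu$ and two further passes through $V_2$, at the price of the fractional-power interpolation and Lemma \ref{Dini lemma 1.1}; your induction is conceptually cleaner but shifts the entire burden onto the summability in $j$ of the increment constants, and this is the one place where your write-up asserts rather than proves. Concretely, your recursion has the form $D_j\le A_j+B\,D_{j-1}$, where $B$ comes from the $V_2$-analogue and equals (up to constants) $\sup_{0<t-\tau\le T}\bigl[\widetilde{\omega}(c\sqrt{t-\tau})+\omega(c\sqrt{t-\tau})\,\widetilde{\widetilde{\omega}}(c\sqrt{t-\tau})\bigr]$, which is finite but not automatically $<1$ for $t-\tau$ up to $T$. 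You would need to run the $\epsilon$/$\lambda$ splitting of Theorem \ref{well definiteness mu} \emph{inside the increment estimate}: note that the damping $e^{-\lambda(t-\eta)}$ is inactive precisely near $\eta=t$, where the Dini-integrable singularity of $(t-\eta)^{-1}[\omega+\widetilde{\omega}](c\sqrt{t-\eta})$ lives, so smallness there must come from cutting at $t-\epsilon^2$ and using $\widetilde{\omega}(c\epsilon)+\widetilde{\widetilde{\omega}}(c\epsilon)\to0$, while the $\lambda^{-1}\epsilon^{-2}$ mechanism handles the rest. This is doable (and is morally why you correctly insist on keeping the $\lambda$-correction), but it is a nontrivial verification, not a formality; without it the series $\sum_j\Delta Z_j$ could fail to converge for $t-\tau$ of order $T$, which is exactly the failure mode Remark \ref{rmk on the lambda correction} warns about for the magnitudes.
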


\vspace{2mm}

The proof exploits the following lemma, which is an immediate consequence of the quasi-triangle inequality of $d$:

\vspace{2mm}

\begin{lemma}
\label{E(y,ct) <= E(x,c't)}
    For every $c_1, \, c_2>0$ there exists $c_3, \, c_4>0$ such that 
    \begin{equation*}
        \sup_{d(x,y) \leq c_2 \sqrt{t - \tau}}
        \gamma_0(\xi^{-1} \circ y, c_1 (t - \tau)) \leq 
        c_3 \cdot \gamma_0(\xi^{-1} \circ x, c_4 (t - \tau))
    \end{equation*}
    for every $(x,t), (\xi,\tau) \in \mathbb{R}^{n+1}$ with $t > \tau$.
\end{lemma}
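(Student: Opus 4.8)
The plan is to cancel the common prefactor $(t-\tau)^{-Q/2}$ appearing in the definition (\ref{definition of E(x,t)}) of $E$, thereby reducing the statement to a pointwise comparison of Gaussian exponentials, and then to exploit the pseudo-triangle inequality for the homogeneous norm on $\mathbb{G}$ together with the elementary bound $(a+b)^2 \leq 2a^2 + 2b^2$.

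First I would recall that there is a structural constant $c_{\mathbb{G}} \geq 1$ with $\|a \circ b\| \leq c_{\mathbb{G}}\,(\|a\| + \|b\|)$ for all $a,b \in \mathbb{R}^n$, and that the homogeneous norm satisfies $\|z^{-1}\| = \|z\|$, so that the quasi-distance $d(x,y) = \|x^{-1}\circ y\|$ is symmetric and obeys $d(x,z) \leq c_{\mathbb{G}}\,(d(x,y) + d(y,z))$. Writing $\|\xi^{-1}\circ x\| = d(\xi,x)$ and $\|\xi^{-1}\circ y\| = d(\xi,y)$, the hypothesis $d(x,y) \leq c_1\sqrt{t-\tau}$ gives
\begin{equation*}
    d(\xi,x) \leq c_{\mathbb{G}}\big(d(\xi,y) + d(y,x)\big) \leq c_{\mathbb{G}}\big(\|\xi^{-1}\circ y\| + c_1\sqrt{t-\tau}\big),
\end{equation*}
whence
\begin{equation*}
    \|\xi^{-1}\circ x\|^2 \leq 2\,c_{\mathbb{G}}^2\,\|\xi^{-1}\circ y\|^2 + 2\,c_{\mathbb{G}}^2\,c_1^2\,(t-\tau).
\end{equation*}

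Rearranging, $\|\xi^{-1}\circ y\|^2 \geq (2c_{\mathbb{G}}^2)^{-1}\|\xi^{-1}\circ x\|^2 - c_1^2(t-\tau)$; dividing by $c(t-\tau)$ and exponentiating yields
\begin{equation*}
    \exp\Big\{-\frac{\|\xi^{-1}\circ y\|^2}{c(t-\tau)}\Big\} \leq e^{c_1^2/c}\,\exp\Big\{-\frac{\|\xi^{-1}\circ x\|^2}{2c_{\mathbb{G}}^2\,c\,(t-\tau)}\Big\}.
\end{equation*}
Multiplying both sides by $(t-\tau)^{-Q/2}$ and taking the supremum over all $y$ with $d(x,y) \leq c_1\sqrt{t-\tau}$ then gives the claim with $c' := 2c_{\mathbb{G}}^2\,c$ and $C := e^{c_1^2/c}$, both independent of the points and of $t,\tau$.

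There is essentially no serious obstacle here: the only subtlety is that the homogeneous norm satisfies merely a quasi-triangle inequality rather than the triangle inequality, which forces the constant $c'$ in the conclusion to be inflated by the factor $2c_{\mathbb{G}}^2$ relative to $c$ — the same mechanism responsible for the loss of sharp constants in all the Gaussian estimates of the paper. One should also observe that the bound is uniform in $\xi$ precisely because $\xi$ enters both sides only through the group operation $x \mapsto \xi^{-1}\circ x$, to which the above manipulations are tailored.
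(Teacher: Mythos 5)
Your proposal is correct and follows essentially the same route as the paper: the quasi-triangle inequality for $d$ combined with the elementary quadratic inequality (the paper writes it as $(a-b)^2 \geq a^2/2 - b^2$ applied to $d(y,\xi) \geq d(x,\xi)/k - c_1\sqrt{t-\tau}$, which is algebraically the same manipulation as yours). The only slip is in the final bookkeeping: since $E(\cdot, s) = s^{-Q/2}\exp\{-\|\cdot\|^2/s\}$, replacing the second argument $c(t-\tau)$ by $c'(t-\tau) = 2c_{\mathbb{G}}^2 c(t-\tau)$ shrinks the prefactor by $(2c_{\mathbb{G}}^2)^{-Q/2}$, so the constant must be $C = (2c_{\mathbb{G}}^2)^{Q/2} e^{c_1^2/c}$ rather than $e^{c_1^2/c}$ — immaterial for the statement, which only asserts existence of some $C$.
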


\vspace{2mm}

\begin{proof}
    In view of the \say{equivalence} (\ref{BLU (2.2)}), we can prove the statement for $E(x,t)$ in place of $\gamma_0(x,t)$. For every $y \in \mathbb{R}^n$ with $d(x,y) \leq c_2 \sqrt{t-\tau}$ there holds
    \begin{equation*}
        d(y,\xi) \geq \frac{d(x,\xi)}{\kappa} - d(x,y) \geq \frac{d(x,\xi)}{\kappa} - 
        c_2 \sqrt{t -\tau}
    \end{equation*}
    where $\kappa \geq 1$ is the constant of the quasi-triangle inequality of $d(\cdot,\cdot)$. Therefore, recalling that $(a-b)^2 \geq \frac{a^2}{2} - b^2$ for every $a,b \in \mathbb{R}$, we deduce that
    \begin{equation*}
        \sup_{d(x,y) \leq c_2 \sqrt{t - \tau}} \exp \left\{ - \frac{d(y,\xi)^2}{c_1(t - \tau)} \right\} \leq 
        \exp \left\{ - \frac{d(x,\xi)^2}{2 c_1 \kappa^2 (t - \tau)} \right\} \exp \left\{ \frac{  c_2^2}{c_1} \right\}
    \end{equation*}
    from which the thesis trivially follows, recalling the definition (\ref{definition of E(x,t)}).
\end{proof}

\vspace{3mm}

\begin{proof}[Proof of Theorem \ref{Dini continuity mu}]
    If $d(x,x')^2 \geq (t - \tau)/2$ then the thesis immediately follows from (\ref{estimate mu}), being $\omega$ non-decreasing; hence we assume $d(x,x')^2 < (t - \tau)/2$.
    For the sake of simplicity, we introduce the following notation for the $x$-variation of a function $f$:
    \begin{equation*} 
        f(x',t; \zeta) - f(x,t; \zeta) =: \Delta_{(x,x')} f(\cdot,t; \zeta).
    \end{equation*}
    From Corollary \ref{eq satisfies by mu} it is straightforward that: \\
    \begin{equation}
    \label{repr formula delta mu}
        \Delta_{(x,x')} \mu(\cdot,t; \zeta) = \Delta_{(x,x')} Z_1(\cdot,t; \zeta) + \iint_{\mathbb{R}^n \times (\tau,t)}
        \Delta_{(x,x')} Z_1(\cdot,t; \chi) \,\mu(\chi; \zeta) \, d\chi.
    \end{equation}
   In view of (\ref{repr formula Z_1}) we can decompose $\Delta_{(x,x')}Z_1(\cdot,t; \zeta)$ as follows:
    \begin{align}
        & \Delta_{(x,x')} Z_1(\cdot,t; \zeta) = 
        \Delta_{(x,x')} a^{ij}(\cdot,t) \, (X_i^x X_j^x \Gamma_{\zeta})(x',t; \zeta) + [a^{ij}(x,t) - a^{ij}(\zeta)] \times
        \nonumber
        \\
        & \label{decomposition of delta Z_1} \quad \times \Delta_{(x,x')} X_i X_j \Gamma_{\zeta}(\cdot,t; \zeta) =  K_1(x,x',t; \zeta) + K_2(x,x',t; \zeta).
    \end{align}
    As for $K_1$, by (\ref{Dini continuity a_ij}), (\ref{BLU (2.3) with gamma}) and Lemma \ref{E(y,ct) <= E(x,c't)} we immediately get
    \begin{align}
        & \label{estimate K1} |K_1(x,x',t; \zeta)| \leq 
        \check{c}_2 \, \omega(d(x,x')) \, (t - \tau)^{-1} \, \gamma_0(\xi^{-1} \circ x', \check{c}_2(t - \tau)).
    \end{align}
   We turn to $K_2$: in view of Lemma \ref{UGE lemma 7.6} we have
    \begin{align*}
        & \left|\Delta_{(x,x')} X_i X_j \Gamma_{\zeta}(\cdot,t; \zeta) \right| \leq c_0 d(x,x') \sup_{d(x,y) \leq c_0 d(x,x')} \sum_{k=1}^m \left|(X_k^x X_i^x X_j^x \Gamma_{\zeta})(y,t; \zeta) \right|
        \\
        & \quad \leq c_1 d(x,x') \, (t-\tau)^{-\frac{3}{2}} \, \sup_{d(x,y) \leq c_0 \sqrt{t-\tau}} \gamma_0(\xi^{-1} \circ y, \check{c}_3(t-\tau))
        \\
        & \quad \leq c_2 d(x,x') \, (t-\tau)^{-\frac{3}{2}} \, \gamma_0(\xi^{-1} \circ x, c_3(t-\tau)).
    \end{align*}
    for any $1 \leq i,j \leq m$, for constants $c_2, c_3>0$.  Therefore, by (\ref{Dini continuity a_ij}) and (\ref{simil subadditivity omega with gamma})
    \begin{align}
        & \label{first estimate K2} |K_2(x,x',t; \zeta)| \leq c \omega(2 \sqrt{t-\tau}) \, d(x,x') \, (t-\tau)^{-\frac{3}{2}} \,\gamma_0(\xi^{-1} \circ x, \widetilde{c}(t-\tau)).
    \end{align}
    Finally, from (\ref{Dini (1.2)}) there holds
    \begin{align*}
        & d(x,x') \, \omega(2\sqrt{t-\tau}) = d(x,x')^{1-\delta} \, \left[ d(x,x')^{\delta} \, \omega(2\sqrt{t-\tau}) \right]
        \\
        & \quad \leq (t-\tau)^{\frac{1-\delta}{2}} \, \left[ c (t-\tau)^{\frac{\delta}{2}} \, \omega(2 d(x,x')) \right] = c (t-\tau)^{\frac{1}{2}} \, \omega(2d(x,x')),
    \end{align*}
    thus for positive constant $\widetilde{c}_0$ and $\widetilde{c}_1$ we have
    \begin{equation*}
        |K_2(x,x',t; \zeta)| \leq \widetilde{c}_0 \, \omega(2d(x,x')) \, (t-\tau)^{-1} \, \gamma_0(\xi^{-1} \circ x, \widetilde{c}_1(t-\tau)).
    \end{equation*}
    The combination of the latter with (\ref{estimate K1}) gives
\begin{equation}
    \label{estimate delta Z_1}
        |\Delta_{(x,x')} Z_1(\cdot,t; \zeta)| \leq \widetilde{c}_2 \omega(2d(x,x')) \, (t - \tau)^{-1} \, \gamma_0(\xi^{-1} \circ x, \widetilde{c}_3(t - \tau)).
    \end{equation}
    Let us now focus on the integral term in (\ref{repr formula delta mu}), which we decompose as:
    \begin{align*}
        & \int_{\tau}^t \left\{\int_{\mathbb{R}^n}
        \Delta_{(x,x')} Z_1(\cdot,t; y,\eta) \,\mu(y,\eta; \zeta) dy \right\} d\eta 
        \\
        & \quad = \left( \int_{\tau}^{(t + \tau)/2} + 
        \int_{(t + \tau)/2}^{t - d(x,x')^2} + 
        \int_{t - d(x,x')^2}^{t} \right) \left\{...\right\} \, d\eta = F_1 + F_2 + F_3
    \end{align*}
    and estimate $F_1, F_2$, $F_3$ separately.
    By (\ref{Dini omega}), (\ref{BLU (1.6) stronger}), (\ref{estimate mu}) and (\ref{estimate delta Z_1})
    \begin{align*}
        & |F_1| \leq 
        c(T) \, \omega(2d(x,x')) 
        \int_{\tau}^{(\tau + t)/2}         
        \frac{\omega(2 \sqrt{\eta - \tau})}
        {(t - \eta)(\eta - \tau)} \, d\eta
        \\
        & \quad \int_{\mathbb{R}^n} \gamma_0(y^{-1} \circ x, \widetilde{c}_3(t - \eta)) \, \gamma_0(\xi^{-1} \circ y, \check{c}_2(\eta - \tau)) \, dy  \\
        & \quad \leq 
        c(T) \, \omega (2d(x,x')) \, (t - \tau)^{-1} \, 
        \gamma_0(\xi^{-1} \circ x, c_1(t - \tau)) \int_{\tau}^{(\tau + t)/2} 
        \frac{\omega(2 \sqrt{\eta - \tau})}
        {\eta - \tau} \, d\eta
        \nonumber\\
        & \quad \leq c(T) \, \omega (2d(x,x')) \frac{\widetilde{\omega}(2 \sqrt{t-\tau})}{t - \tau} \gamma_0(\xi^{-1} \circ x, c_1(t - \tau)).
    \end{align*}    
    As for $F_3$, by (\ref{est Z_j without lambda}) and Lemma \ref{E(y,ct) <= E(x,c't)} we have
    \begin{align*}
        & |\Delta_{(x,x')} Z_1(\cdot,t; \zeta)| \leq |Z_1(x,t; \zeta)| + |Z_1(x',t; \zeta)| 
        \\
        & \quad \leq c(T) \, \omega(2\sqrt{t-\tau}) (t-\tau)^{-1} \, \left[ \gamma_0(\xi^{-1} \circ x, \widetilde{c}_3(t-\tau)) + \gamma_0(\xi^{-1} \circ x', \widetilde{c}_3(t-\tau)) \right]
        \\
        & \quad \leq c(T) \, \omega(2\sqrt{t-\tau}) \, (t-\tau)^{-1} \, \gamma_0(\xi^{-1} \circ x, c_1(t-\tau)),
    \end{align*}
    hence in view of (H2), (\ref{BLU (1.6) stronger}) and (\ref{estimate mu})
    \begin{align*}
        & |F_3| \leq c(T) \int_{t - d(x,x')^2}^{t} \frac{\omega (2 \sqrt{t - \eta}) \, \omega (2 \sqrt{\eta - \tau})}
        {(t - \eta)(\eta - \tau)} \, d\eta \int_{\mathbb{R}^n} \gamma_0(\xi^{-1} \circ y, \check{c}_2(\eta - \tau)) \times
        \\ 
        & \quad \times \gamma_0 (y^{-1} \circ x, c_1(t - \eta)) \, dy
        \\
        & \quad \leq c(T) \, \omega(2 \sqrt{t-\tau}) \, (t-\tau)^{-1} \, \gamma_0(\xi^{-1} \circ x, c_2(t - \tau))
        \int_{t - d(x,x')^2}^{t} \frac{\omega (2\sqrt{t - \eta})}
        {t - \eta} \, d\eta
        \\
        & \quad
        \leq c(T) \, \omega (2 \sqrt{t - \tau}) \, \widetilde{\omega}(2d(x,x')) \, (t - \tau)^{-1} \, 
        \gamma_0(\xi^{-1} \circ x, c_2(t - \tau)).
    \end{align*} 
   Finally, to estimate $F_2$ we resort once again to the decomposition (\ref{decomposition of delta Z_1}):
    \begin{align*}
        & F_2 = \int_{(\tau + t)/2}^{t - d(x,x')^2} \int_{\mathbb{R}^n} [K_1(x,x',t; \chi) + K_2(x,x',t;\chi)]\,  \mu(\chi; \zeta) \, d\chi = F_2^1 + F_2^2.
    \end{align*}
    As for $F_2^2$, by (\ref{BLU (1.6) stronger}), (\ref{estimate mu}) and (\ref{first estimate K2}) we obtain
    \begin{align*}
        & |F_2^2| \leq c(T) \, d(x,x') \int_{(\tau + t)/2}^{t - d(x,x')^2}
        \frac{\omega (2\sqrt{t - \eta}) \, \omega (2 \sqrt{\eta - \tau})} {(t - \eta)^{3/2}(\eta - \tau)} \, d\eta \, \times
        \\
        & \quad \times \int_{\mathbb{R}^n} \gamma_0 (y^{-1} \circ x, \widetilde{c}(t - \eta)) \, \gamma_0 (\xi^{-1} \circ y, \check{c}_2(\eta - \tau)) \, dy
        \\
        & \quad \leq c(T) \, d(x,x') \,
        \frac{\omega (2 \sqrt{t - \tau})}{t - \tau} \,
        \gamma_0(\xi^{-1} \circ x, c_1(t - \tau))
        \int_{(\tau + t)/2}^{t - d(x,x')^2}
        \frac{\omega (2\sqrt{t - \eta})} {(t - \eta)^{3/2}}
        \, d\eta.
    \end{align*}
    For $\eta \in [(\tau + t)/2, t - d(x,x')^2]$ we have $t - \eta \geq d(x,x')^2$, hence by (\ref{Dini (1.2)}):
    \begin{equation*}
        (t - \eta)^{-\delta/2} \omega (2\sqrt{t - \eta}) \leq 
        c \, d(x,x')^{-\delta} \, \omega (2d(x,x')). 
    \end{equation*}
    Recalling that $(t-\tau)/2 \geq d(x,x')^2$, it follows that
    \begin{align*}
        & \int_{(\tau + t)/2}^{t-d(x,x')^2}
        \frac{\omega (2\sqrt{t - \eta})} {(t - \eta)^{3/2}}
        \, d\eta \leq c \, \frac{\omega (2 d(x,x'))} {d(x,x')^{\delta}}
        \int_{(\tau + t)/2}^{t-d(x,x')^2}
        (t - \eta)^{-(3 - \delta)/2} \, d\eta 
        \\
        & \quad \leq c \, \frac{\omega (2d(x,x'))} {d(x,x')^{\delta}} \left[\left( \frac{t - \tau}{2} \right)^{-(1 - \delta)/2} + d(x,x')^{-(1-\delta)} \right] \leq c \, \frac{\omega (2d(x,x'))} {d(x,x')}.
    \end{align*}
    Plugging this inequality in the above estimate of $F_2^2$ we find
    \begin{equation*}
        |F_2^2| \leq c(T) \, \omega(2d(x,x')) \, \omega (2 \sqrt{t - \tau}) \, 
        (t - \tau)^{-1} \,
        \gamma(\xi^{-1} \circ x, c_1(t - \tau)).
    \end{equation*}
    Let us turn to $F_2^1$. By (\ref{BLU (1.6) stronger}), (\ref{estimate mu}) and (\ref{estimate K1}) we deduce
    \begin{align*}
        & |F_2^1| \leq c(T) \, \omega (d(x,x'))
        \int_{(\tau + t)/2}^{t-d(x,x')^2} \frac{\omega (2 \sqrt{\eta - \tau})} 
        {(t - \eta)(\eta - \tau)} \, d\eta \int_{\mathbb{R}^n} \gamma_0 (y^{-1} \circ x, \check{c}(t - \eta)) \times
        \\
        & \quad \times
        \gamma_0 (\xi^{-1} \circ y, \check{c}_2(\eta - \tau)) \, dy 
        \\
        & \quad \leq c(T) \, \omega (d(x,x')) \, 
        \frac{\omega (2 \sqrt{t - \tau})} {t - \tau} \gamma_0(\xi^{-1} \circ x, c_1(t - \tau))
        \int_{(\tau + t)/2}^{t-d(x,x')^2} \frac{1} 
        {t - \eta} \, d\eta.
    \end{align*}
   Since $d(x,x')^2 \leq t-\eta$ and $\omega$ is non decreasing, then
    \begin{align*}
        & \omega (d(x,x')) \leq \omega^{\frac{1}{2} - \alpha} (d(x,x')) \, \omega^{\frac{1}{2} + \alpha} (\sqrt{t - \eta}) \quad \forall \alpha \in \left(0,\frac{1}{4}\right).
    \end{align*}
    Therefore, in view of Lemma \ref{Dini lemma 1.1}, for any $\alpha \in \left( 0, \frac{1}{4} \right)$ it holds
    \begin{align*}
        & |F_2^1| \leq c(T) \, \omega^{\frac{1}{2} - \alpha}(d(x,x')) \, \frac{\omega (2 \sqrt{t - \tau})}{t - \tau} \, \gamma_0(\xi^{-1} \circ x, c_1(t - \tau)) \int_{\tau}^{t} \frac{\omega^{\frac{1}{2}+\alpha} (\sqrt{t - \eta})} 
        {t - \eta} \, d\eta
    \\
    & \quad\leq c(T) \, \omega^{\frac{1}{2} - \alpha}(d(x,x')) \frac{\omega(2 \sqrt{t-\tau})}{t - \tau} \, 
        \gamma_0(\xi^{-1} \circ x, c_1(t - \tau)).
    \end{align*}
    Among all terms of which the $x$-variation of $\mu$ consists, $F_2^1$ is the only one not bounded from the right-hand side of (\ref{Dini continuity mu}); nonetheless, the above estimates yield
    \begin{align}
        & |\Delta_{(x,x')} \mu(\cdot,t; \xi,\tau)| \leq c(T) \, (t-\tau)^{-1} \, \gamma_0(\xi^{-1} \circ x, c_1(t-\tau)) \, \times
        \nonumber
        \\
        &
    \label{first estimate delta mu}
        \quad \times  \left[\omega^{\frac{1}{2} - \alpha}(2d(x,x')) + \omega(2\sqrt{t-\tau}) \, \widetilde{\omega}(2 d(x,x'))\right],
    \end{align}
    which is a sort of first step towards the thesis; now the strategy is to re-estimate $F_2^1$ in view of this.
    We further decompose the latter as follows:
    \begin{align*}
        & F_2^1 = \int_{(\tau + t)/2}^{t-d(x,x')^2} \int_{\mathbb{R}^n} 
        \Delta_{(x,x')} a^{ij}(\cdot,t) X_i X_j \gamma_{\chi}(y^{-1} \circ x', t - \eta) \,  \mu(x',\eta; \zeta) \, d\chi +
        \\ 
        & \quad \int_{(\tau + t)/2}^{t-d(x,x')^2}  \int_{\mathbb{R}^n} 
        \Delta_{(x,x')} a_{ij}(\cdot,t) X_i X_j \gamma_{\chi}(y^{-1} \circ x', t - \eta) \, \Delta_{(x',y)} \mu(\cdot,\eta; \zeta) \, d\chi \\
        & \quad = V_1 + V_2
    \end{align*}
    and use (\ref{first estimate delta mu}) to bound $V_2$; with the aid of (\ref{Dini continuity a_ij}), (\ref{BLU (1.6)}), (\ref{Dini (1.18) with gamma}), (\ref{BLU (2.3) with gamma}) this gives
    \begin{align*}
        & |V_2| \leq c(T) \, \omega (d(x,x')) \, \int_{(\tau + t)/2}^{t-d(x,x')^2} \frac{d\eta}{(t - \eta)(\eta - \tau)}  \int_{\mathbb{R}^n} \gamma_0(y^{-1} \circ x', c_1(t - \eta)) \, \times
        \\
        & \quad \times \gamma_0(\xi^{-1} \circ y, c_1(\eta - \tau)) \, \left[\omega^{\frac{1}{2} - \alpha} (2d(x',y)) + \omega(2\sqrt{\eta - \tau}) \, \widetilde{\omega}(2d(x',y) \right] dy
        \\
        & \quad \leq c(T) \, \omega(d(x,x')) \, (t-\tau)^{-1} \int_{(t+\tau)/2}^{t-d(x,x')^2} [\omega^{\frac{1}{2} - \alpha} (2\sqrt{t-\eta}) + \omega(2\sqrt{\eta - \tau}) \times 
        \\
        & \quad \times \widetilde{\omega}(2\sqrt{t-\eta})] \, (t-\eta)^{-1} \, d\eta \int_{\mathbb{R}^n} \gamma_0(y^{-1} \circ x', c_1(t-\eta)) \, \gamma_0(\xi^{-1} \circ y, c_1(\eta - \tau)) \, dy
        \\
        & \quad \leq c(T) \, \omega^{1-2\alpha}(d(x,x')) \, (t-\tau)^{-1} \, \gamma_0(\xi^{-1} \circ x', c_1(t-\tau)) \times 
        \\
        & \quad \times \int_{0}^{t} \left[\omega^{\frac{1}{2} + \alpha} (2\sqrt{t-\eta}) \, +\widetilde{\omega}(2 \sqrt{t-\eta}) \right] (t-\eta)^{-1} \, d\eta.
    \end{align*}
    The last integral converges by (\ref{double Dini omega}) and Lemma \ref{Dini lemma 1.1}, thus
    \begin{equation*}
        |V_2| \leq c(T) \, \omega^{1-2\alpha}(d(x,x')) \, (t-\tau)^{-1} \, \gamma_0(\xi^{-1} \circ x', c_1(t-\tau)).
    \end{equation*}
    As for $V_1$, first of all we exploit Corollary \ref{parametrix is continuous and its 2-derivs has null integral} to state that
    \begin{align*}
        & V_1 = \int_{(\tau + t)/2}^{t-d(x,x')^2} \int_{\mathbb{R}^n} 
        \Delta_{(x,x')} a^{ij}(\cdot,t) \, \mu(x',\eta; \zeta) \, \times
        \\
        & \quad \times \left[X_i X_j \gamma_{(y,\eta)}(y^{-1} \circ x', t - \eta) - X_i X_j \gamma_{(x',\eta)}(y^{-1} \circ x',t - \eta) \right] d\chi.
    \end{align*}
    Note that, for any $\eta \in \left( \frac{t+\tau}{2}, t \right)$, in view of (\ref{definition of E(x,t)})-(\ref{BLU (2.2)}) the following holds:
    \begin{align}
         & \nonumber E(\xi^{-1} \circ x, c(\eta - \tau)) \leq 2^{Q/2} \, E(\xi^{-1} \circ x, c(t-\tau)). \\
         & \label{E(x,eta-tau) <= C E(x,t-tau)} \implies \gamma_0(\xi^{-1} \circ x, c(\eta-\tau)) \leq c' \gamma_0(\xi^{-1} \circ x, c(t-\tau)),
    \end{align}
    Together with (H2), (\ref{BLU (1.5)}), (\ref{Dini (1.18) with gamma}), (\ref{BLU (2.4) with gamma}), (\ref{estimate mu}), the above gives
    \begin{align*}
        & |V_1| \leq c(T) \, \omega (d(x,x')) 
        \int_{(\tau + t)/2}^{t-d(x,x')^2}
        \frac{\omega (2 \sqrt{\eta - \tau})}{(t - \eta)(\eta - \tau)} \,        
        \gamma_0(\xi^{-1} \circ x', c_1(\eta - \tau)) \, d\eta \, \times
        \\
        & \quad \times  \int_{\mathbb{R}^n}
        \omega (d(y,x')) \,
        \gamma_0(y^{-1} \circ x', \check{c}_2(t - \eta)) \, dy
        \\
        & \quad \leq c(T) \, \omega (d(x,x')) \, \omega (2 \sqrt{t - \tau}) \, (t - \tau)^{-1} \, \gamma_0(\xi^{-1} \circ x', c_1(t - \tau)) \, \times 
        \\
        & \quad \times \int_{(\tau+t)/2}^{t-d(x,x')^2} \frac{\omega(\sqrt{t-\eta})}{t-\eta} \, d\eta
        \int_{\mathbb{R}^n} \gamma_0(y^{-1} \circ x', \check{c}_2(t - \eta)) \, dy
        \nonumber
        \\
        &
        \quad
        \leq c(T) \, \omega (d(x,x')) \, \omega (2 \sqrt{t - \tau}) \, (t - \tau)^{-1} \, \gamma_0(\xi^{-1} \circ x', c_1(t - \tau)).
    \end{align*}
    Combining the estimates for $V_1$, $V_2$ and using Lemma \ref{E(y,ct) <= E(x,c't)} we obtain
    \begin{equation*}
        |F_2^1| \leq 
        c(T) \, (t - \tau)^{-1} \, \omega^{1 - 2\alpha} (d(x,x')) \, \gamma_0(\xi^{-1} \circ x, c_1(t - \tau)), 
    \end{equation*}
    from which it follows that
    \vspace{1mm}
    \begin{align}
        & |\Delta_{(x,x')} \mu(\cdot,t; \xi,\tau)| \leq 
        c(T) \, (t - \tau)^{-1} \, \gamma_0(\xi^{-1} \circ x, c_1(t-\tau)) \, \times
        \nonumber
        \\
        &
    \label{second estimate delta mu}
        \quad \times \left[\omega^{1 - 2\alpha} (2d(x,x')) + \omega (2\sqrt{t - \tau}) \, \widetilde{\omega} (2 d(x,x')) \right].
    \end{align}
    Iterating the above argument, to conclude we must update only the estimate of $V_2$. Following the procedure above but using (\ref{second estimate delta mu}) in place of (\ref{first estimate delta mu}), we find 
    \begin{align*}
        & |V_2| \leq c(T) \, \omega(d(x,x')) \, (t-\tau)^{-1} \, \gamma_0(\xi^{-1} \circ x', c_1(t-\tau)) \times
        \\
        & \quad \times \int_{0}^{t} \frac{\omega^{1 - 2\alpha} (2\sqrt{t-\eta}) + \widetilde{\omega}(2\sqrt{t-\eta})}{t-\eta} \, d\eta.
    \end{align*}
    Since $\alpha < \frac{1}{4}$ we can directly apply Lemma \ref{Dini lemma 1.1}, which together with (H2) gives
    \begin{equation*}
        |V_2| \leq c(T) \, \omega(d(x,x')) \, (t-\tau)^{-1} \, \gamma_0(\xi^{-1} \circ x', c_1(t-\tau)),
    \end{equation*}
    thus ending the proof, according to the remark made above.
\end{proof}

\vspace{2mm}

\hspace{-2mm} To shorten the notation, sometimes it will be convenient to express (\ref{estimate delta mu}) as
\begin{align}
    & \nonumber|\Delta_{(x,x')}\mu(\cdot,t; \zeta)| \leq c(T) (t-\tau)^{-1} \widetilde{\omega}(2d(x,x')) \times \\
    & \label{short estimate delta mu} \quad \times [\gamma_0(\xi^{-1} \circ x,\widehat{c}_2(t-\tau))+\gamma_0(\xi^{-1} \circ x',\widehat{c}_2(t-\tau))] 
\end{align}
for all $x, \, \xi \in \mathbb{R}^n$ and $0<t-\tau \leq T$ (it follows from (\ref{estimate delta mu}) by (\ref{Dini 1.17})).

\vspace{2mm}

\begin{remark}
\label{double Dini condition enters}
    The above proof could not work if the coefficients $a_{ij}(x,t)$ of $H$ were only Dini continuous: indeed we have exploited multiple times Lemma \ref{Dini lemma 1.1}, which in turn requires the double Dini condition to hold for $\omega$.
\end{remark}

\vspace{2mm}

\subsection{Differentiability of the integral term}
\label{subset 3.3}

Let us exploit estimates and regularity of the kernel $\mu$ to study well-definiteness and differentiability (along $H$) of the integral term
\begin{equation*}
    J(z; \zeta) := \iint_{\mathbb{R}^n \times (\tau,t)} \Gamma_{\chi}(z; \chi) \, \mu(\chi; \zeta) \, d\chi, \quad x \in \mathbb{R}^n, \, \xi \in \mathbb{R}^n, \, t>\tau.
\end{equation*}
Note that (\ref{Dini omega}), (\ref{BLU (2.3) with gamma}), (\ref{BLU (1.6) stronger}) and (\ref{estimate mu}) give
\begin{align*}
    & \iint_{\mathbb{R}^n \times (\tau,t)} \left|\Gamma_{\chi}(z; \chi) \, \mu(\chi; \zeta) \right| \, d\chi \leq c(T) \int_{\tau}^{t} \frac{\omega(2 \sqrt{\eta - \tau})}{\eta - \tau} \, d\eta \, \times
    \\
    & \quad \times \int_{\mathbb{R}^n}
    \gamma_0(y^{-1} \circ x, \check{c}_0(t - \eta)) \,  \gamma_0(\xi^{-1} \circ y, \check{c}_2(\eta - \tau)) \, dy
    \\
    &
    \quad \leq c(T) \, \widetilde{\omega} (2 \sqrt{t - \tau}) \, 
    \gamma_0(\xi^{-1} \circ x, c_1(t - \tau)), 
    \end{align*}
namely $J$ is well defined and the following estimate holds:
\vspace{1mm}
\begin{equation}
\label{estimate J}
    |J(z; \zeta)| \leq c(T) \, \widetilde{\omega} (2 \sqrt{t - \tau}) \, 
    \gamma_0(\xi^{-1} \circ x, \widehat{c}_3(t - \tau))
\end{equation}
for any $x \in \mathbb{R}^n, \, \xi \in \mathbb{R}^n, \, 0 < t - \tau \leq T$, for a positive constant $\widehat{c}_3$ depending only on $\Lambda, \delta$ and $\mathbb{G}$. Moreover: 

\vspace{2mm}

\begin{proposition}
\label{continuity of J}
    {$J(x,t; \xi,\tau)$} is a continuous function over $\{t>\tau\}$.
\end{proposition}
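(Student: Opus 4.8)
The plan is to reproduce, at the level of $J$, the approximation scheme already used for $\mu$ in the proof of Proposition \ref{continuity of mu}. For $\sigma>0$ introduce the truncated integrals
\begin{equation*}
    J^{\sigma}(z;\zeta) := \int_{\mathbb{R}^n \times (\tau+\sigma,\, t-\sigma)} \Gamma_{\chi}(z;\chi)\, \mu(\chi;\zeta)\, d\chi, \qquad z,\zeta \in \mathbb{R}^{n+1}\ \text{with}\ t > \tau + 2\sigma,
\end{equation*}
and prove (a) that each $J^{\sigma}$ is continuous on $\{t > \tau + 2\sigma\}$, and (b) that $J^{\sigma} \to J$ uniformly on every compact subset $K$ of $\{t > \tau\}$ as $\sigma \to 0^{+}$. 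Since a locally uniform limit of continuous functions is continuous, (a) and (b) together give the statement.

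For (a): on the truncated domain one has $t - \eta \geq \sigma$ and $\eta - \tau \geq \sigma$, so $\chi \neq z$ and $\chi \neq \zeta$; hence, for fixed $(y,\eta)$, the integrand $\Gamma_{\chi}(z;\chi)\,\mu(\chi;\zeta)$ is a continuous function of $(z;\zeta)$ --- using the joint continuity of $(z,\chi)\mapsto\Gamma_{\chi}(z;\chi)$ off the diagonal (Theorem \ref{properties freezed gamma} together with (\ref{BLU (2.4)}), cf. Remark \ref{continuity of freezed Gamma}) and the continuity of $\mu$ (Proposition \ref{continuity of mu}) --- while the indicator $\mathbf{1}_{(\tau+\sigma,\, t-\sigma)}(\eta)$ depends continuously on $(t,\tau)$ for all $\eta$ outside a null set. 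On any compact $K \subset \{t > \tau + 2\sigma\}$ the integrand is dominated, uniformly in $(z;\zeta)\in K$, by $C(K,\sigma)\, E(y^{-1}\circ x, c(t-\eta))\, E(\xi^{-1}\circ y, c(\eta-\tau))$ (by (\ref{BLU (2.2)}) and (\ref{estimate mu})), which is integrable in $(y,\eta)$ over $\mathbb{R}^n \times (\tau,t)$ exactly as shown in (\ref{integrand of J is in L^1}). Dominated convergence then yields the continuity of $J^{\sigma}$.

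For (b): fix $K \subset\subset \{t > \tau\}$ and take $\sigma$ small enough that $K \subset \{t > \tau + 2\sigma\}$. With $I_{\sigma} := (\tau, \tau+\sigma) \cup (t-\sigma, t)$ we have
\begin{equation*}
    \sup_{K} |J - J^{\sigma}| \leq \sup_{(z;\zeta) \in K} \int_{\mathbb{R}^n \times I_{\sigma}} \bigl|\Gamma_{\chi}(z;\chi)\, \mu(\chi;\zeta)\bigr|\, d\chi.
\end{equation*}
On $(\tau, \tau+\sigma)$ I would bound $|\Gamma_{\chi}(z;\chi)|$ and $|\mu(\chi;\zeta)|$ through (\ref{BLU (2.2)}) and (\ref{estimate mu}), carry out the $y$-integration by the reproduction identity (\ref{BLU (1.6)}), and then use (\ref{Dini omega}); since on $K$ the factor $t-\eta$ stays bounded below and $t-\tau$ stays bounded, this produces a bound of the form $C(K)\,\widetilde{\omega}(c_1\sqrt{\sigma})\,\sup_{K} E(\xi^{-1}\circ x, c'(t-\tau))$, which tends to $0$ because $\omega$ satisfies the Dini condition (\ref{Dini omega}). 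On $(t-\sigma, t)$ the kernel $\mu(\chi;\zeta)$ is no longer singular --- since $\eta - \tau \geq t-\tau-\sigma$ is bounded below on $K$ --- so by (\ref{estimate mu}) and (\ref{BLU (2.2)}) the integrand is dominated there by $C(K)\, E(y^{-1}\circ x, c(t-\eta))\, E(\xi^{-1}\circ y, c(\eta-\tau))$; the $y$-integration via (\ref{BLU (1.6)}) followed by integration over the interval of length $\sigma$ gives a bound $C(K)\,\sigma\,\sup_{K} E(\xi^{-1}\circ x, c'(t-\tau)) \to 0$. This establishes (b), hence the proposition.

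The only genuinely delicate point is the piece of the integral near $\eta = \tau$, where the singularities of $\Gamma_{\chi}$ and of $\mu$ pile up and where --- contrary to the H\"older situation of \cite{BLU_Levi_ultraparabolic} --- one has no spare power of $t-\tau$ to absorb; it is precisely the finiteness of $\widetilde{\omega}$ (through $\widetilde{\omega}(c_1\sqrt{\sigma}) \to 0$) that makes the argument work, just as in the integrability estimate (\ref{integrand of J is in L^1}). Everything else amounts to repeating manipulations already performed in Theorem \ref{well definiteness mu} and Proposition \ref{continuity of mu}.
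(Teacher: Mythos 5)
Your proof is correct and takes essentially the same route as the paper: the same truncation $J^{\sigma}$ over $\mathbb{R}^n \times (\tau+\sigma, t-\sigma)$, continuity of $J^{\sigma}$ by dominated convergence, and uniform convergence on compacts via (\ref{BLU (1.6)}), (\ref{BLU (2.2)}), (\ref{estimate mu}), ending with a bound of the form $C(K)\,[\widetilde{\omega}(c_1\sqrt{\sigma}) + \sigma]$. (A cosmetic point only: near $\eta=\tau$ it is $\mu(\chi;\zeta)$ that is singular, while $\Gamma_{\chi}(z;\chi)$ is singular near $\eta=t$; your estimates treat both endpoints exactly as the paper does.)
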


\vspace{2mm}

\begin{proof}
    As for the proof of Proposition \ref{continuity of mu}, it suffices to show that $J$ is the uniform limit (in every $K \Subset\{t>\tau\}$) of the sequence of continuous functions
    \begin{equation*}
        J^{\sigma}(z; \zeta) := \iint_{\mathbb{R}^n \times (\tau+\sigma,t-\sigma)} \Gamma_{\chi}(z; \chi) \, \mu(\chi; \zeta) \, d\chi.
    \end{equation*}
    as $\sigma \to 0^+$. Given $K \Subset \{t>\tau\}$, by (\ref{BLU (2.3) with gamma}), (\ref{BLU (1.6) stronger}) and (\ref{estimate mu}) we have
    \begin{align*}
        & \sup_{(z;\zeta) \in K} \left|J^{\sigma}(z;\zeta)- J(z;\zeta)\right| \leq \sup_{(z;\zeta) \in K} \left( \int_{\tau}^{\tau+\sigma} + \int_{t-\sigma}^t \right) \int_{\mathbb{R}^n} \left| \Gamma_{\chi}(z;\chi) \, \mu(\chi,\zeta) \right| \, d\chi 
        \\
        & \quad \leq c(K) \sup_{(z;\zeta) \in K} \left( \int_{\tau}^{\tau + \sigma} + \int_{t - \sigma}^{t} \right) \frac{\omega (2 \sqrt{\eta - \tau})}{\eta - \tau} d\eta \int_{\mathbb{R}^n} \gamma_0(y^{-1} \circ x, t - \eta) \times
        \\
        & \quad \times \gamma_0(\xi^{-1} \circ y, \check{c}_2(\eta - \tau))\, dy \leq 
        \\
        & \quad \leq c(K) \sup_{(z;\zeta) \in K} \, \left[\gamma_0(\xi^{-1} \circ x, c_1(t - \tau))
        \left( \int_{\tau}^{\tau + \sigma} + \int_{t - \sigma}^{t} \right) \frac{\omega (2 \sqrt{\eta - \tau})}{\eta - \tau} d\eta \right].
    \end{align*}
    In $K \Subset \{t>\tau\}$ we have $t-\tau$ far from zero, hence $\gamma_0$ is bounded; for $\eta \in (t-\sigma,t)$ we use $\frac{t-\tau}{2} \leq \eta-\tau \leq t-\tau$, whereas for $\eta \in (\tau,\tau+\sigma)$ we use (H2) to get
    \begin{align*}
        & \sup_{(z;\zeta) \in K} \left|J^{\sigma}(z;\zeta)- J(z;\zeta)\right| \leq c(K) \sup_{(z;\zeta) \in K} \left\{ \int_{\tau}^{\tau + \sigma} \frac{\omega (2 \, \sqrt{\eta - \tau})}{\eta - \tau} \, d\eta + \right.
        \\
        & \quad \left.\int_{t-\sigma}^t \frac{\omega (2 \sqrt{\eta - \tau})}{\eta - \tau} \, d\eta \right\} \leq c(K) \sup_{(z;\zeta) \in K} \left\{ 2\widetilde{\omega}(2 \sqrt{\sigma}) +\frac{\omega(2 \sqrt{t-\tau})}{(t-\tau)/2} \cdot \sigma \right\} \longrightarrow 0
    \end{align*}
    as $\sigma \to 0^+$, which concludes the proof.
\end{proof}
    
\vspace{2mm}

Then, according to Definition (\ref{def of C2}), we have: 

\vspace{2mm}

\begin{theorem}[Derivatives of $J$]
\label{Lie derivatives J}
    For every fixed $\zeta \in \mathbb{R}^{n+1}$, the function $J(\cdot; \zeta)$ belongs to $\mathfrak{C}^2 (\{z,\zeta \in \mathbb{R}^{n+1}: t > \tau \})$ and we have:
    \begin{align}
    & \label{X_j J}
        X_j^x J(z;\zeta) = \iint_{\mathbb{R}^n \times (\tau,t)} X_j^x \Gamma_{\chi}(z; \chi) \, \mu(\chi; \zeta) \, d\chi;\\
    & \label{X_i X_j J}
        X_i^x X_j^x J(z; \zeta) = \lim_{\epsilon \to 0} \iint_{\mathbb{R}^n \times (\tau, t-\epsilon)} X_i^x X_j^x \Gamma_{\chi}(z; \chi) \, \mu(\chi; \zeta) \, d\chi;\\
    & \label{dJ/dt}
        \partial _t J(z; \zeta) = \mu(z; \zeta) + \lim_{\epsilon \to 0} \iint_{\mathbb{R}^n \times (\tau, t-\epsilon)} \partial _t \Gamma_{\chi}(z; \chi) \, \mu(\chi; \zeta) \, d\chi.
    \end{align}
    Moreover, the following estimates hold for $x \in \mathbb{R}^n, \, \xi \in \mathbb{R}^n, \, 0 < t - \tau \leq T$:
    \begin{align}
    & \label{estimate X_j J}
        |X_j^x J(z; \zeta)| \leq c(T) \, \widetilde{\omega} (2 \sqrt{t - \tau}) \, (t - \tau)^{-\frac{1}{2}} \, E(\xi^{-1} \circ x, \widehat{c}_4(t - \tau)); \\
    & \label{estimate X_i X_j J}
        |X_i^x X_j^x J(z; \zeta)| \leq c(T) \, \widetilde{\omega}(2\sqrt{t-\tau}) \, (t-\tau)^{-1} \, E(\xi^{-1} \circ x, \widehat{c}_4(t-\tau));\\
    & \label{estimate dJ/dt}
        |\partial _t J(z; \zeta)| \leq c(T) \, \widetilde{\omega}(2 \sqrt{t-\tau}) \, (t-\tau)^{-1} \, E(\xi^{-1} \circ x, \widehat{c}_4(t-\tau))
    \end{align}
    for a constants $\widehat{c}_4$ depending only on $\Lambda, \delta, \mathbb{G}$, and $c$ depending also on $T$.
\end{theorem}

\vspace{2mm}

The proof exploits the following technical lemma:

\vspace{2mm}

\begin{lemma}[cf \cite{BLU_Levi_ultraparabolic}, Lemma 2.8]
\label{BLU lemma 2.8} 
    Let $X \in \mathfrak {g}$ and let $\{ u_j \}_j$ be a sequence of continuous functions, defined on an open subset $A \subseteq \mathbb{R}^n$, with continuous Lie derivative along X. Suppose that $u_j$ converges pointwise in A to some function $u$ and that $X u_j$ converges to some function $w$ uniformly on the compact subsets of $A$. 
    Then there exists the Lie derivative of $u$ along $X$, $Xu(x)=w(x)$, for every $x \in A$.
\end{lemma}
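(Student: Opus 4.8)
The plan is to reduce the statement to the classical one–variable theorem on term–by–term differentiation, by restricting every function involved to a single integral curve of $X$. Fix $x_0\in A$. Since $\mathbb{G}$ is a homogeneous (Carnot) group, the flow $\psi_X$ of $X$ from the origin is defined for all $h\in\mathbb{R}$, and by left–invariance of $X$ the curve $\gamma_{x_0}(h):=x_0\circ\psi_X(h)$ is exactly the integral curve of $X$ issued from $x_0$; consequently, for every $v\in C^1$ one has $\frac{d}{dh}\,v(\gamma_{x_0}(h))=(Xv)(\gamma_{x_0}(h))$ for all $h$, not merely at $h=0$. Because $A$ is open and $\gamma_{x_0}$ is continuous with $\gamma_{x_0}(0)=x_0$, there is $\varepsilon>0$ with $\gamma_{x_0}([-\varepsilon,\varepsilon])\subset A$, and this image is a compact subset of $A$.

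First I would set $g_j(h):=u_j(\gamma_{x_0}(h))$ and $g(h):=u(\gamma_{x_0}(h))$ for $h\in[-\varepsilon,\varepsilon]$. By hypothesis each $g_j$ is of class $C^1$ with $g_j'(h)=(Xu_j)(\gamma_{x_0}(h))$, and $g_j\to g$ pointwise on $[-\varepsilon,\varepsilon]$; moreover $Xu_j\to w$ uniformly on the compact set $\gamma_{x_0}([-\varepsilon,\varepsilon])$, so $g_j'\to\widetilde w$ uniformly on $[-\varepsilon,\varepsilon]$, where $\widetilde w(h):=w(\gamma_{x_0}(h))$ is continuous. I would then pass to the limit in the identity $g_j(h)-g_j(0)=\int_0^h g_j'(s)\,ds$, using pointwise convergence of $g_j$ on the left and uniform convergence of $g_j'$ on the right, to get $g(h)-g(0)=\int_0^h\widetilde w(s)\,ds$ for every $h\in[-\varepsilon,\varepsilon]$. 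Since $\widetilde w$ is continuous, $g$ is differentiable with $g'(h)=\widetilde w(h)$; evaluating at $h=0$ yields $g'(0)=w(x_0)$. By definition of the Lie derivative, $g'(0)=\frac{d}{dh}\big|_{h=0}u(x_0\circ\psi_X(h))=Xu(x_0)$, hence $Xu(x_0)=w(x_0)$. As $x_0\in A$ was arbitrary, this is the thesis.

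The only genuinely structural point — and the spot where a little care is needed — is the identification of the integral curve of the left–invariant field $X$ through an arbitrary $x_0$ as the left translate $x_0\circ\psi_X(\cdot)$ of the integral curve through the origin, together with the resulting chain rule $\frac{d}{dh}\,v(x_0\circ\psi_X(h))=(Xv)(x_0\circ\psi_X(h))$ for $v\in C^1$; this is precisely where left–invariance of $X$ (and completeness of its flow on a homogeneous group) enters, and it mirrors the corresponding step in \cite{BLU_Levi_ultraparabolic}, Lemma 2.8. Everything else is the classical real–analysis fact that uniform convergence of the derivatives plus pointwise convergence of the functions forces differentiability of the limit with the expected derivative. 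A minor bookkeeping remark: the argument uses only the behaviour of the $u_j$ along the single curve $\gamma_{x_0}$, so the hypothesis that each $u_j$ is continuous with continuous Lie derivative along $X$ (rather than fully $C^1$) is exactly what is needed, and no more.
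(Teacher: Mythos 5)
Your proof is correct, and it is essentially the standard argument (the paper itself gives no proof of this lemma, deferring entirely to \cite{BLU_Levi_ultraparabolic}, Lemma 2.8, whose proof proceeds in the same way: restrict to integral curves and invoke the classical one--variable theorem on differentiating a uniform limit of derivatives). The reduction to $g_j(h)=u_j(x_0\circ\psi_X(h))$, the passage to the limit in $g_j(h)-g_j(0)=\int_0^h g_j'(s)\,ds$, and the evaluation at $h=0$ are all sound.

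One step deserves to be made explicit rather than folded into the phrase ``for every $v\in C^1$'': you need $g_j'(h)=(Xu_j)(x_0\circ\psi_X(h))$ for \emph{all} $h\in[-\varepsilon,\varepsilon]$, while the hypothesis only provides, at each point $y\in A$, the derivative of $s\mapsto u_j(y\circ\psi_X(s))$ at $s=0$. Since $u_j$ is not assumed $C^1$, the chain rule is not available; what closes the gap is the one--parameter--subgroup property $\psi_X(h+s)=\psi_X(h)\circ\psi_X(s)$ of the flow of a left--invariant field, which gives
\begin{equation*}
g_j'(h)=\frac{d}{ds}\Big|_{s=0}u_j\bigl((x_0\circ\psi_X(h))\circ\psi_X(s)\bigr)=(Xu_j)(x_0\circ\psi_X(h)),
\end{equation*}
exactly the Lie derivative at the point $x_0\circ\psi_X(h)\in A$. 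Your closing remark shows you are aware that only the Lie--derivative hypothesis is used, so this is a matter of writing the justification down, not of a missing idea; with that sentence added the proof is complete.
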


\vspace{2mm}

\begin{proof}[Proof of Theorem \ref{Lie derivatives J}]
    As for (\ref{X_j J}), (\ref{X_i X_j J}) and (\ref{dJ/dt}), it is enough to prove them for $x \in \mathbb{R}^n, \, \xi \in \mathbb{R}^n, \, 0 < t-\tau \leq T$, for arbitrary $T>0$. For any $\epsilon > 0$ (small enough) we introduce the function
    \begin{equation*}
        J_{\epsilon}(z; \zeta) := 
        \iint_{\mathbb{R}^n \times (\tau, t-\epsilon)} \Gamma_{\chi}(z; \chi) \, \mu(\chi; \zeta) \, d\chi.
    \end{equation*}
    It is immediate that $J_{\epsilon}(\cdot,t; \xi,\tau) \longrightarrow J(\cdot,t; \xi,\tau)$ pointwise in $\mathbb{R}^n$ as $\epsilon \to 0^+$. \\
    By Theorem \ref{properties freezed gamma} and dominated convergence (based on (\ref{BLU (2.3)}), (\ref{estimate mu})), we have
    \begin{align*}
        & X_j^x J_{\epsilon} (z; \zeta) = 
        \iint_{\mathbb{R}^n \times (\tau, t-\epsilon)} X_j^x \Gamma_{\chi}(z; \chi) \, \mu(\chi; \zeta) \, d\chi;\\
        & X_i^x X_j^x J_{\epsilon} (z; \zeta) = \iint_{\mathbb{R}^n \times (\tau, t-\epsilon)} X_i^x X_j^x \Gamma_{\chi}(z; \chi) \, \mu(\chi; \zeta) \, d\chi
    \end{align*}
    for any $x \in \mathbb{R}^n, \, \xi \in \mathbb{R}^n, \, t>\tau$. Now we introduce the function
    \begin{equation*}
        w(z; \zeta) := \iint_{\mathbb{R}^n \times (\tau,t)} X_j^x \Gamma_{\chi}(z; \chi) \, \mu(\chi; \zeta) \, d\chi \quad z, \, \zeta \in \mathbb{R}^{n+1}; \,t>\tau,
    \end{equation*}
    which, by (\ref{BLU (2.3) with gamma}) and (\ref{estimate mu}), is well defined and satisfy
    \begin{align*}
        & \sup_{x \in \mathbb{R}^n} |X_j^x J_{\epsilon}(x,t; \xi,\tau) - w(x,t; \xi,\tau)| \leq c(T) \, \sup_{x \in \mathbb{R}^n}
        \int_{t - \epsilon}^{t} \frac{\omega (2 \sqrt{\eta - \tau})} {\sqrt{t - \eta} \, (\eta - \tau)} d\eta \, \times
        \\ 
        & \quad \times \int_{\mathbb{R}^n} 
        \gamma_0(y^{-1} \circ x, \check{c}_1(t - \eta)) \, \gamma_0 (\xi^{-1} \circ y, \check{c}_2(\eta - \tau)) \, dy.
    \end{align*}
    Let us assume that $\epsilon < (t-\tau)/2$, so that $t-\epsilon > (t+\tau)/2$. By (\ref{BLU (2.2)}), (\ref{BLU (1.6) stronger}), for some $c_1>0$ and $c_2=\overline{c}_0 c_1$ we have
    \begin{align*}
        & \sup_{x \in \mathbb{R}^n} \left|X_j^x J_{\epsilon}(x,t; \zeta) - w(x,t; \zeta) \right| \leq c(T) \frac{\omega (2 \sqrt{t - \tau})} {t - \tau} \, \int_{t - \epsilon}^{t} \frac{1} {\sqrt{t - \eta}} \, d\eta \times \\
        & \quad \times \sup_{x \in \mathbb{R}^n} \gamma_0(\xi^{-1} \circ x, c_1(t - \tau)) \leq c(T) \, \frac{\omega (2 \sqrt{t - \tau})} {t - \tau} \, \sqrt{\epsilon} \times 
        \\ 
        & \quad \times \sup_{x \in \mathbb{R}^n} \, E(\xi^{-1} \circ x, c_2(t - \tau)) = c(T) (t - \tau)^{-\frac{Q}{2} - 1} \, \omega (2 \sqrt{t - \tau}) \, \sqrt{\epsilon} \longrightarrow 0
    \end{align*}
    as $\epsilon \to 0^+$, thus (\ref{X_j J}) holds in view of Lemma \ref{BLU lemma 2.8}.
    Furthermore, proceeding as for the last chain of inequalities, it is immediate to get
    \begin{align*}
        & |X_j^x J(z; \zeta)| \leq c(T) E(\xi^{-1} \circ x, c_2(t-\tau)) \int_{\tau}^t \frac{\omega(2 \sqrt{\eta - \tau})}{(t-\eta)^{1/2} (\eta - \tau)} \, d\eta.
    \end{align*}
    Decomposing $(\tau,t) = (\tau, (t+\tau)/2] \cup ((t+\tau)/2, t)$ and using (\ref{Dini omega}) we obtain:
    \begin{align*}
        & \int_{\tau}^{\frac{t+\tau}{2}} \frac{\omega(2 \sqrt{\eta - \tau})}{(t-\eta)^{\frac{1}{2}} (\eta - \tau)} \, d\eta \leq \frac{\sqrt{2}} {(t-\tau)^{\frac{1}{2}}} \int_{\tau}^{\frac{t+\tau}{2}} \frac{\omega(2  \sqrt{\eta - \tau})}{\eta - \tau} d\eta \leq \sqrt{2} \, \frac{\widetilde{\omega}(2  \sqrt{t-\tau})}{(t-\tau)^{\frac{1}{2}}}; \\
        & \int_{\frac{t+\tau}{2}}^t \frac{\omega(2  \sqrt{\eta - \tau})}{(t-\eta)^{\frac{1}{2}} (\eta - \tau)} \, d\eta \leq 2 \, \frac{\omega(2  \sqrt{t-\tau})}{t-\tau} \int_{\frac{t+\tau}{2}}^t (t-\eta)^{-1/2} d\eta \leq C \, \frac{\widetilde{\omega}(2  \sqrt{t-\tau})}{(t-\tau)^{\frac{1}{2}}},
    \end{align*}
   which plugged in the inequality above give (\ref{estimate X_j J}).
    \\
    As for (\ref{X_i X_j J}), in view of Lemma \ref{BLU lemma 2.8} it suffices to prove that
    \begin{equation}
    \label{unif conv X_i X_j Jeps}
        \sup_{x \in \mathbb{R}^n} \left|\int_{t - \epsilon}^{t} d\eta \int_{\mathbb{R}^n} X_i^x X_j^x \Gamma_{(y,\eta)}(z; y,\eta) \, \mu(y,\eta; \zeta) \, dy \right| \longrightarrow 0 \quad \text{as} \, \epsilon \to 0^+.
    \end{equation}
    To this purpose, we consider the integral
    \begin{equation}
    \label{definition I hat}
        \widehat{I} = \int_{\mathbb{R}^n} X_i^x X_j^x \Gamma_{(y,\eta)}(z; y,\eta) \, \mu(y,\eta; \zeta) \, dy;
    \end{equation}
    fixed $y_0 \in \mathbb{R}^n$, we split $\widehat{I}$ as $\widehat{I} = \widehat{I}_1 + \widehat{I}_2 + \widehat{I}_3$, with:
    \begin{align*}
        & \widehat{I}_1 = \int_{\mathbb{R}^n} X_i^x X_j^x \Gamma_{(y,\eta)}(z; y,\eta) \, [\mu(y,\eta; \zeta) - 
        \mu(y_0,\eta; \zeta)] \, dy;
        \\
        & \widehat{I}_2 = \mu(y_0,\eta; \zeta) 
        \int_{\mathbb{R}^n} X_i^x X_j^x [\Gamma_{(y,\eta)}(z; y,\eta) - \Gamma_{(y_0,\eta)}(z; y,\eta)] \, dy;
        \\
        & \widehat{I}_3 = \mu(y_0,\eta; \zeta) 
        \int_{\mathbb{R}^n} X_i^x X_j^x \Gamma_{(y_0,\eta)}(z; y,\eta) \, dy.
    \end{align*}
    Corollary \ref{parametrix is continuous and its 2-derivs has null integral} yields $\widehat{I}_3 = 0$. To estimate $\widehat{I}_1$ and $\widehat{I}_2$ we fix $y_0 = x$; from (\ref{BLU (1.5)}), (\ref{Dini (1.18) with gamma}), (\ref{BLU (2.3) with gamma}), (\ref{BLU (1.6) stronger}), (\ref{estimate delta mu}) and (\ref{E(x,eta-tau) <= C E(x,t-tau)}) we get
    \begin{align*}
        & |\widehat{I}_1| \leq c(T) (t-\eta)^{-1} (\eta - \tau)^{-1} \int_{\mathbb{R}^n} [ \omega(2 d(x,y)) + \omega(2 \sqrt{\eta-\tau})\widetilde{\omega}(2 d(x,y)) ] \times 
        \\
        & \quad \times \gamma_0(y^{-1} \circ x, \check{c}_2(t-\eta)) \, \left[\gamma_0(\xi^{-1} \circ y, \widehat{c}_2(\eta - \tau)) + \gamma_0(\xi^{-1} \circ x, \widehat{c}_2(\eta - \tau)) \right] \, dy\\
        & \leq c(T) \, \frac{ \omega(2 \sqrt{t-\eta}) + \omega(2 \sqrt{\eta-\tau}) \widetilde{\omega}(2 \, \sqrt{t-\eta})}{(t-\eta)(\eta-\tau)} \left[ \int_{\mathbb{R}^n} \gamma_0(y^{-1} \circ x, c_1(t-\eta)) \times \right.
        \\
        & \times \left. \gamma_0(\xi^{-1} \circ y, c_1(\eta - \tau)) \, dy + \gamma_0(\xi^{-1} \circ x, c_1(\eta - \tau)) \int_{\mathbb{R}^n} \gamma_0(y^{-1} \circ x, c_1(t-\eta)) \, dy \right]
        \\
        &
        \leq c(T) \, \frac{\omega (2 \sqrt{t - \eta}) + \omega(2 \sqrt{\eta-\tau}) \, \widetilde{\omega} (2 \sqrt{t - \eta})} 
        {(t - \eta)(\eta - \tau)} \, \gamma_0(\xi^{-1} \circ x, c_1(t - \tau)).
        \end{align*}
    Hence by using (\ref{definition of E(x,t)})-(\ref{BLU (2.2)}) and the fact that $\eta-\tau \geq (t-\tau)/2$ we find
    \begin{align*}
        & \sup_{x \in \mathbb{R}^n} \int_{t - \epsilon}^{t} |\widehat{I}_1| \, d\eta  \leq c(T) \, (t - \tau)^{-1 - \frac{Q}{2}} \left[\int_{t - \epsilon}^{t} \frac{\omega (2 \sqrt{t - \eta}) + \widetilde{\omega} (2 \sqrt{t - \eta})}{t - \eta} \, d\eta \right]
        \\
        & \quad = c(T) \, (t - \tau)^{-1 - \frac{Q}{2}} \left[ \widetilde{\omega}(2 \sqrt{\epsilon}) + \widetilde{\widetilde{\omega}}(2 \sqrt{\epsilon}) \right] \longrightarrow 0 \quad \text{as} \: \epsilon \to 0^+
    \end{align*}
    thanks to (H2), i.e. the double Dini continuity of the coefficients $a_{ij}$.\\
    As for $\widehat{I}_2$, from (\ref{BLU (1.5)}), (\ref{Dini (1.18) with gamma}), (\ref{BLU (2.4) with gamma}), (\ref{estimate mu}) and (\ref{E(x,eta-tau) <= C E(x,t-tau)}) we get
    \begin{align*}
        & |\widehat{I}_2| \leq c(T) \, (t-\eta)^{-1} \, (\eta - \tau)^{-1} \, \omega(2 \sqrt{\eta - \tau}) \, \gamma_0(\xi^{-1} \circ x, \check{c}_2(\eta - \tau)) \, \times
        \\
        & \quad \times \int_{\mathbb{R}^n} \omega(d(x,y)) \, \gamma_0(y^{-1} \circ x, \check{c}_2(t-\eta)) \, dy \leq c(T) \frac{\omega(\sqrt{t-\eta}) \, }{t-\eta}  \times
        \\
        & \quad \times \frac{\omega(2 \sqrt{t-\eta})}{\eta-\tau} \, \gamma_0(\xi^{-1} \circ x, \check{c}_2(t - \tau)) \, \int_{\mathbb{R}^n} \gamma_0(y^{-1} \circ x, \check{c}_2(t-\eta)) \, dy 
        \\
        &
        \quad = c(T) \, (t-\eta)^{-1} \, (\eta - \tau)^{-1} \, \omega(2 \sqrt{\eta - \tau}) \, \omega(\sqrt{t-\eta}) \, \gamma_0(\xi^{-1} \circ x, \check{c}_2(t - \tau))
    \end{align*}
    \\
    and so by (H2), (\ref{definition of E(x,t)}), (\ref{BLU (2.2)}) and the fact that $t-\eta \geq (t-\tau)/2$ we deduce
    \begin{align*}
        & \sup_{x \in \mathbb{R}^n}  \int_{t-\epsilon}^{t} |\widehat{I}_2 | \, d\eta \leq c(T) \, (t-\tau)^{-\frac{Q}{2}-1} \, \omega(2 \sqrt{t-\tau}) \int_{t-\epsilon}^{t} \frac{\omega(\sqrt{t-\eta})}{t-\eta} \, d\eta
        \\
        & \quad = c(T) \, (t-\tau)^{-\frac{Q}{2}-1} \, \widetilde{\omega}(\sqrt{\epsilon}) \longrightarrow 0 \quad \text{as} \: \epsilon \to 0^+,
    \end{align*}
    namely (\ref{unif conv X_i X_j Jeps}) holds, thus yielding (\ref{X_i X_j J}). For what concerns (\ref{estimate X_i X_j J}), we have
    \begin{align*}
        |X_i^x X_j^x J(z;\zeta)| \leq \int_{\tau}^{\frac{t+\tau}{2}} |\widehat{I}| \, d\eta + \lim_{\epsilon \to 0} \int_{\frac{t+\tau}{2}}^{t-\epsilon} (|\widehat{I}_1| + |\widehat{I}_2|) \, d\eta = \widehat{J}_1 + \widehat{J}_2.
    \end{align*}
    From the definition (\ref{definition I hat}), it is immediate that $\widehat{I}$ can be bounded as follows:
    \begin{equation*}
        |\widehat{I}| \leq c(T) \, \frac{\omega(2 \sqrt{\eta - \tau})}{(t-\eta)(\eta - \tau)} \, \gamma_0(\xi^{-1} \circ x, \check{c}_2(t-\tau))
    \end{equation*}
    and consequently
    \begin{align*}
            & \widehat{J}_1 \leq c(T) \, (t-\tau)^{-1} \, \gamma_0(\xi^{-1} \circ x, \check{c}_2(t-\tau)) \int_{\tau}^{\frac{t+\tau}{2}} \frac{\omega(2 \sqrt{\eta - \tau})}{\eta - \tau} \, d\eta =
        \\ & \quad = c(T) \, \widetilde{\omega}(2 \sqrt{t-\tau}) \, (t-\tau)^{-1} \, \gamma_0(\xi^{-1} \circ x, \check{c}_2(t-\tau)).
    \end{align*}
    Consider now the above estimates for $\widehat{I}_1$ and $\widehat{I}_2$. From the monotonicity of $\omega$ it is immediate that for any $\eta \in (\tau, \frac{t+\tau}{2})$ it holds 
    \begin{equation*}
        |\widehat{I}_1| + |\widehat{I}_2| \leq c(T) \, \frac{\omega(2 \sqrt{t-\eta}) + \omega(2 \sqrt{\eta - \tau}) \, \widetilde{\omega}(2 \sqrt{t-\eta})}{(t-\eta)(\eta - \tau)} \, \gamma_0(\xi^{-1} \circ x, c_1(t-\tau))
    \end{equation*}
    and therefore, exploiting also the property (\ref{Dini 1.17}), it follows that
    \begin{align*}
        & \widehat{J}_2 \leq c(T) \left[ \int_{\frac{t+\tau}{2}}^{t} \frac{\omega(2 \sqrt{t-\eta})}{t-\eta} \, d\eta \, + \omega(2 \sqrt{t-\tau}) \int_{\frac{t+\tau}{2}}^t \frac{\widetilde{\omega}(2 \sqrt{t-\eta})}{t-\eta} \, d\eta \right] \times
        \\
        & \quad \times (t-\tau)^{-1} \, \gamma_0(\xi^{-1} \circ x, c_1(t-\tau))   
        \\
        & \quad \leq c(T) \, \frac{\widetilde{\omega}(2 \sqrt{t-\tau}) + \omega(2 \sqrt{t-\tau}) \, \widetilde{\widetilde{\omega}}(2 \sqrt{t-\tau})}{t-\tau} \, \gamma_0(\xi^{-1} \circ x, c_1(t-\tau)) 
        \\
        & \quad \leq c(T) \, (t-\tau)^{-1} \, \widetilde{\omega}(c_2 \sqrt{t-\tau}) \, \gamma_0(\xi^{-1} \circ x, c_1(t-\tau)).
    \end{align*}
    Combining the estimates for $\widehat{J}_1$ and $\widehat{J}_2$ we get (\ref{estimate X_i X_j J}) (up to using (\ref{BLU (2.2)})).

    \vspace{3mm}
    
    Finally, let us pass to the proof of (\ref{dJ/dt}), which consists of three major steps. \\
    We start by claiming that $t \mapsto J_{\epsilon}(x, t; \xi,\tau)$ has continuous derivative given by:
    \begin{equation}
    \label{dJeps/dt}
        \partial _t J_{\epsilon} (z; \zeta) = \int_{\mathbb{R}^n} \gamma_{(y, \, t - \epsilon)} (y^{-1} \circ x, \epsilon) \mu(y,t - \epsilon; \zeta) \, dy + \int_{\tau}^{t - \epsilon} \int_{\mathbb{R}^n} \partial _t \Gamma_{\chi} (z; \chi) \mu(\chi; \zeta) \, d\chi.
    \end{equation}
    Let us prove (\ref{dJeps/dt}). Given $h \in \mathbb{R}$ (with $|h|$ small enough) we have:
    \vspace{1mm}
    \begin{align}
        & \frac{1}{h} \, \left[J_{\epsilon}(x,t+h; \zeta) - J_{\epsilon}(x,t; \zeta) \right] = \int_{t-\epsilon}^{t+h-\epsilon} \int_{\mathbb{R}^n} \frac{1}{h} \, \Gamma_{\chi} (x, t+h; \chi) \, \mu(\chi; \zeta) \, d\chi
        \nonumber
        \\
        & \quad \label{decomposition in A1, A2}
        + \int_{\tau}^{t - \epsilon} \int_{\mathbb{R}^n} \frac{1}{h} \, \left[\Gamma_{\chi}(x, t+h; \chi) - \Gamma_{\chi} (x,t; \chi) \right] \, \mu(\chi; \zeta) \, d\chi = A_1(h) + A_2(h).
    \end{align}
    Using the Lagrange mean value theorem we obtain
    \begin{equation*}
        A_2(h) = \int_{\tau}^{t - \epsilon} \int_{\mathbb{R}^n} \partial _t \Gamma_{\chi}(x, t^*; \chi) \, \mu(\chi; \zeta) \, d\chi
    \end{equation*}
    for some $t^* \in (t, t+h)$ if $h>0$, for some $t^* \in (t+h,t)$ if $h<0$.
    Then, since $\partial _t \gamma_{(y,\eta)} (y^{-1} \circ x, \cdot)$ is continuous from Theorem \ref{properties freezed gamma}, by dominated convergence (based on (\ref{BLU (2.3)}) and (\ref{estimate mu})) we deduce that
    \begin{equation}
    \label{limit of A2}
        \lim_{h \to 0} A_2(h) = \int_{\tau}^{t - \epsilon} \int_{\mathbb{R}^n} \partial _t \Gamma_{\chi}(x, t; \chi) \, \mu(\chi; \zeta) \, d\chi.
    \end{equation}
    \\
    Concerning $A_1$, through the change of variable $\eta \mapsto r = \frac{\eta - (t-\epsilon)}{h}$ we get
    \begin{equation*}
        A_1(h) = \int_{0}^{1} dr \int_{\mathbb{R}^n} \Gamma_{(y, \, t - \epsilon + rh)} (y^{-1} \circ x, \epsilon + h - rh) \, \mu(y, t - \epsilon + rh; \xi,\tau) \, dy.
    \end{equation*}
    Since $\gamma_{(\xi,\tau)}(\xi^{-1} \circ x, t-\tau)$ and $\mu(x,t; \xi,\tau)$ are continuous w.r.t. $x, \, \xi \in \mathbb{R}^n, \, t>\tau$ (by Corollary \ref{parametrix is continuous and its 2-derivs has null integral} and Proposition \ref{continuity of mu}), dominated convergence gives
    \begin{equation}
    \label{limit of A1}
        \lim_{h \to 0} A_1(h) = \int_{\mathbb{R}^n} \gamma_{(y, \, t - \epsilon)} (y^{-1} \circ x, \epsilon) \, \mu(y, t - \epsilon; \xi,\tau) \, dy,
    \end{equation}
    thus (\ref{dJeps/dt}) holds true as a consequence of (\ref{decomposition in A1, A2}), (\ref{limit of A2}), (\ref{limit of A1}). As a second step we claim that, given an arbitrary $K \Subset (\tau,+\infty)$, there holds
    \begin{equation}
    \label{unif conv time 1}
        \lim_{\epsilon \to 0^+} \sup_{t \in K} \left|\mu(x,t; \xi,\tau) - \int_{\mathbb{R}^n} \gamma_{(y, \, t - \epsilon)} (y^{-1} \circ x, \epsilon) \, \mu(y, t - \epsilon; \xi,\tau) \, dy \right| = 0.
    \end{equation}
    Let us prove the claim; note that for any $0<\epsilon<\inf_K ((t-\tau)/2)$ we have
    \begin{align*}
        & \sup_{t \in K} \left|\mu(x,t; \xi,\tau) - \int_{\mathbb{R}^n} \gamma_{(y, \, t - \epsilon)} (y^{-1} \circ x, \epsilon) \, \mu(y, t - \epsilon; \xi,\tau) \, dy \right| \leq
        \\
        & \leq \sup_{t \in K} \int_{\mathbb{R}^n} \left| \left[\gamma_{(y, \, t - \epsilon)} (y^{-1} \circ x, \epsilon) - \gamma_{(x, \, t - \epsilon)} (y^{-1} \circ x, \epsilon) \right] \, \mu(y, t - \epsilon; \xi,\tau) \right| \, dy 
        \\
        & + \sup_{t \in K} \int_{\mathbb{R}^n} \gamma_{(x, \, t - \epsilon)} (y^{-1} \circ x, \epsilon) \, \left|\mu(y, t-\epsilon; \xi,\tau) - \mu(x, t-\epsilon; \xi,\tau) \right| \, dy 
        \\
        & + \sup_{t \in K} \left|\mu(x,t-\epsilon; \xi,\tau)  \int_{\mathbb{R}^n} \gamma_{(x, \, t-\epsilon)}(y^{-1} \circ x, \epsilon) \, dy \, - \mu(x,t; \xi,\tau) \right| = \overline{S}_1 + \overline{S}_2 + \overline{S}_3.
    \end{align*}
    By (\ref{BLU (1.5)}) and the uniform continuity of $\mu(x,\cdot; \xi,\tau)$ over $K$ (for any fixed $x \in \mathbb{R}^n, \, \xi \in \mathbb{R}^n)$, it is straightforward that
    \begin{align*}
        \overline{S}_3 = \sup_{t \in K} \, |\mu(x, t-\epsilon; \xi,\tau) - \mu(x,t; \xi,\tau)| \longrightarrow 0 \quad
        \text{as} \: \epsilon \to 0^+.
    \end{align*}

    \vspace{1mm}
    
    As for $\overline{S}_1$, by (\ref{Dini (1.18) with gamma}), (\ref{BLU (2.4) with gamma}) and (\ref{BLU (1.6) stronger}) we get
    \begin{align*}
        & \overline{S}_1 \leq c(K) \, \sup_{t \in K} \Bigg[\frac{\omega (2 \sqrt{t - \tau})}{t - \epsilon - \tau} \int_{\mathbb{R}^n} \omega (d(x,y)) \, \gamma_0(\xi^{-1} \circ y, \check{c}_2 (t - \epsilon - \tau)) \times
        \\
        & \quad \times \gamma_0(y^{-1} \circ x, \check{c}_0 \epsilon) \, dy \Bigg]\leq c(K) \, \omega (\sqrt{\epsilon}) \, \sup_{t \in K} \, (t-\tau)^{-1} \int_{\mathbb{R}^n} \gamma_0(y^{-1} \circ x, \check{c}_0 \epsilon) \times \\
        & \quad \times \gamma_0(\xi^{-1} \circ y, \check{c}_2(t - \epsilon - \tau)) \, dy \leq c(K) \, \omega (\sqrt{\epsilon}) \, \sup_{t \in K} \Bigg[\frac{\gamma_0(\xi^{-1} \circ x, c_1(t - \tau))}{t - \tau} \,  \Bigg].
    \end{align*}
    and similarly, with the aid of (\ref{BLU (1.5)}), (\ref{Dini (1.18) with gamma}), (\ref{BLU (2.3) with gamma}), (\ref{BLU (1.6) stronger}) and (\ref{short estimate delta mu})
    \begin{align*}
        & \overline{S}_2 \leq c(K) \, \sup_{t \in K} \, \Bigg[(t - \epsilon - \tau)^{-1} \int_{\mathbb{R}^n} \gamma_0(y^{-1} \circ x, \epsilon) \, \widetilde{\omega} (2d(x,y)) \times
        \\
        & \quad \times [\gamma_0(\xi^{-1} \circ y, \check{c}_1 (t - \epsilon - \tau))+ \gamma_0(\xi^{-1} \circ x, \widehat{c}_2 (t - \epsilon - \tau))] \,  \, dy \Bigg]
        \\
        & \quad \leq c(K) \, \sup_{t \in K} \, \widetilde{\omega} (2 \sqrt{\epsilon}) \Bigg\{ \, (t-\tau)^{-1} \Bigg[ \int_{\mathbb{R}^n} \gamma_0(\xi^{-1} \circ y, \widehat{c}_2(t-\epsilon-\tau)) \times
        \\
        & \quad \times \gamma_0(y^{-1} \circ x, \epsilon) \, dy + \gamma_0(\xi^{-1} \circ x, \widehat{c}_2(t-\epsilon-\tau)) \int_{\mathbb{R}^n} \gamma_0(y^{-1} \circ x, \epsilon) \, dy \Bigg] \Bigg\}
        \\
        & \quad \leq c(K) \, \widetilde{\omega} (2 \sqrt{\epsilon}) \sup_{t \in K} \left[\frac{\gamma_0(\xi^{-1} \circ x, c_1(t-\tau)) + \gamma_0(\xi^{-1} \circ x, \widehat{c}_2(t-\epsilon-\tau))}{t-\tau} \right].
    \end{align*}
    Then from (\ref{E(x,eta-tau) <= C E(x,t-tau)}) and the usual compactness argument we conclude that
    \begin{align*}
        & \overline{S}_1+\overline{S}_2 \leq c(K) [ \omega(\sqrt{\epsilon})+\widetilde{\omega} (2 \sqrt{\epsilon})]\longrightarrow 0 \quad \text{as} \: \epsilon \to 0^+,
    \end{align*}
    thus (\ref{unif conv time 1}) holds true. The final step consists in proving the following:
    \begin{equation}
    \label{unif conv time 2}
        \lim_{\epsilon \to 0^+} \sup_{t \in K} \, \left|\int_{t - \epsilon}^{t} d\eta \int_{\mathbb{R}^n} \partial _t \gamma_{(y,\eta)}(y^{-1} \circ x, t - \eta) \, \mu(y,\eta; \xi,\tau) \right| \, dy= 0 
    \end{equation}
    since the proof follows exactly the lines of that of (\ref{unif conv X_i X_j Jeps}), we omit the details.
    \\
    Combining (\ref{dJeps/dt}), (\ref{unif conv time 1}), (\ref{unif conv time 2}) we finally get (\ref{dJ/dt}). As for (\ref{estimate dJ/dt}), thanks to the fact that $\partial_t \Gamma_{\zeta}(z;\zeta)$ and $X_i^x X_j^x \Gamma_{\zeta}(z;\zeta)$ enjoy the same estimate up to constants (see Theorem \ref{estimate freezed gamma}), reasoning as for the proof of (\ref{estimate X_i X_j J}) one easily gets
    \begin{align*}
        & \left|\lim_{\epsilon \to 0} \int_{\tau}^{t - \epsilon} d\eta \int_{\mathbb{R}^n} \partial _t \gamma_{(y,\eta)}(y^{-1} \circ x, t - \eta) \, \mu(y,\eta; \xi,\tau) \, dy \right| \leq
        \\
        & \quad \leq c(T) \, \widetilde{\omega}(2 \sqrt{t-\tau}) \, (t-\tau)^{-1} \, \gamma_0(\xi^{-1} \circ x, c_1(t-\tau));
    \end{align*}
    furthermore, combining the estimate (\ref{estimate mu}) with (\ref{Dini 1.17}) one easily deduces that also $\mu(z;\zeta)$ enjoys the same estimate, thus yielding (\ref{estimate dJ/dt}).
\end{proof}

\vspace{2mm}

Note that, according to definitions (\ref{def Gamma Levi method})-(\ref{definition mu}), in principle $\Gamma(x,t; \xi,\tau)$ is defined only for $t>\tau$. In what follows, we agree to extend $\Gamma$ to the whole $\mathbb{R}^{n+1}$ by setting $\Gamma(x,t; \xi,\tau) = 0 \:\,$ for $t \leq \tau$. Then, by Theorem 
\ref{Lie derivatives J} one gets:

\vspace{2mm}

\begin{theorem}[Upper Gaussian estimates for $\Gamma$]
\label{estimates for Gamma}
    $\Gamma$ is a continuous function outside the diagonal of $\mathbb{R}^{n+1} \times \mathbb{R}^{n+1}$ and for any $\zeta \in \mathbb{R}^{n+1}$ we have that
    \begin{align}
    & \label{Gamma in C2}
        \Gamma(\cdot; \zeta) \in \mathfrak{C}^2 (\mathbb{R}^{n+1} \setminus \{\zeta\});\\
    & \label{H Gamma = 0}
        H (\, \Gamma(\cdot; \zeta) \,) = 0 \quad \text{in} \:\, \mathbb{R}^{n+1} \setminus \{ \zeta \}.
    \end{align}
    Fixed any $T>0$, the following estimates hold for $x \in \mathbb{R}^n, \, \xi \in \mathbb{R}^n, \, 0 < t - \tau \leq T$:
    \begin{align}
    & \label{estimate Gamma}
        |\Gamma(z; \zeta)| \leq c(T) \, E(\xi^{-1} \circ x, \widehat{c}_5(t - \tau));\\
    & \label{estimate X_j Gamma}
        |X_j^x \Gamma(z; \zeta)| \leq c(T) \, (t - \tau)^{-\frac{1}{2}} \, E(\xi^{-1} \circ x, \widehat{c}_5(t - \tau));
    \\
    & \label{estimate X_i X_j Gamma}
        |X_i^x X_j^x \Gamma(z; \zeta)|, \, |\partial _t \Gamma(z; \zeta)| \leq c(T) \, (t - \tau)^{-1} \, E(\xi^{-1} \circ x, \widehat{c}_5(t - \tau))
    \end{align}
    with $\widehat{c}_5$ depending only on $\Lambda, \delta$, $\mathbb{G}$, and $c$ depending also on $T$.
\end{theorem}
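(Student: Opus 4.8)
The plan is to write $\Gamma(z;\zeta) = \Gamma_\zeta(z;\zeta) + J(z;\zeta)$, as in (\ref{def Gamma Levi method})--(\ref{definition J}), and to read off each assertion from the properties of the two summands proved above. I would begin with the Gaussian estimates (\ref{estimate Gamma})--(\ref{estimate X_i X_j Gamma}), which are the easy part: by the triangle inequality it suffices to bound $\Gamma_\zeta(z;\zeta) = \gamma_\zeta(\xi^{-1}\circ x, t-\tau)$ and its Lie derivatives up to second order in $x$ and first order in $t$ through (\ref{BLU (2.2)}) and (\ref{BLU (2.3)}) of Theorem \ref{estimate freezed gamma}, and $J(z;\zeta)$ and its derivatives through (\ref{estimate J}), (\ref{estimate X_j J}), (\ref{estimate X_i X_j J}), (\ref{estimate dJ/dt}) of Theorem \ref{Lie derivatives J}. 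Since $\widetilde\omega$ is continuous with $\widetilde\omega(0)=0$, the factors $\widetilde\omega(c_1\sqrt{t-\tau})$ carried by the $J$-terms are bounded on $0<t-\tau\le T$ and may be absorbed into $C(T)$; and since $E(\cdot, c_1(t-\tau))\le C\,E(\cdot,c_2(t-\tau))$ whenever $c_1\le c_2$, the Gaussian kernels appearing with different constants may be unified into one after enlarging $c$. This yields all three estimates.

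Next I would treat the regularity (\ref{Gamma in C2}) and the continuity off the diagonal. On the open half-space $\{t>\tau\}$ the frozen term $\Gamma_\zeta(\cdot;\zeta)$ is smooth by Theorem \ref{properties freezed gamma} and $J(\cdot;\zeta)\in\mathfrak{C}^2(\{t>\tau\})$ by Theorem \ref{Lie derivatives J}, so $\Gamma(\cdot;\zeta)\in\mathfrak{C}^2(\{t>\tau\})$; on $\{t<\tau\}$ we have $\Gamma(\cdot;\zeta)\equiv 0$, which settles everything there. At a boundary point $(x_0,\tau)$ with $x_0\ne\xi$ the slice $\Gamma(\cdot,\tau;\zeta)$ is identically zero, hence its Lie derivatives up to second order vanish; the difference quotient of $t\mapsto\Gamma(x_0,t;\zeta)$ at $t=\tau$ is zero from the left, while from the right it is bounded by $C(T)\,h^{-1}E(\xi^{-1}\circ x_0, ch)$, which tends to $0$ as $h\to 0^+$ because $\|\xi^{-1}\circ x_0\|>0$; the same exponential decay shows these derivatives extend continuously across $t=\tau$. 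Joint continuity of $\Gamma$ off the diagonal then follows from Remark \ref{continuity of freezed Gamma} and Proposition \ref{continuity of J} for $t>\tau$, is trivial for $t<\tau$, and across $t=\tau$ with $x\ne\xi$ is guaranteed by $|\Gamma(z;\zeta)|\le C(T)E(\xi^{-1}\circ x, c(t-\tau))\to 0$, which matches the extension by zero.

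The one genuinely computational point is the identity (\ref{H Gamma = 0}). It is trivial on $\{t<\tau\}$ and, on $\{t=\tau,\ x\ne\xi\}$, follows from the vanishing of the derivatives just discussed. On $\{t>\tau\}$ I would compute $H\Gamma = H\Gamma_\zeta + HJ$, where $H\Gamma_\zeta(z;\zeta) = Z_1(z;\zeta)$ by (\ref{definition Z_1}). For the second term, plugging the representation formulas (\ref{X_i X_j J}) and (\ref{dJ/dt}) into $HJ = \sum_{i,j}a_{ij}(z)X_iX_j J - \partial_t J$ and pulling the constants $a_{ij}(z)$ inside the integrals yields
\begin{equation*}
    HJ(z;\zeta) = -\mu(z;\zeta) + \lim_{\epsilon\to 0}\int_{\mathbb{R}^n\times(\tau,t-\epsilon)}\Big(\sum_{i,j=1}^m a_{ij}(z)\,X_iX_j\Gamma_\chi(z;\chi) - \partial_t\Gamma_\chi(z;\chi)\Big)\,\mu(\chi;\zeta)\,d\chi.
\end{equation*}
Since $\Gamma_\chi(\cdot;\chi)$ solves $H_\chi\Gamma_\chi(\cdot;\chi)=0$ away from $\chi$ (which applies here as $\eta<t$), the bracket collapses to $\sum_{i,j}[a_{ij}(z)-a_{ij}(\chi)]X_iX_j\Gamma_\chi(z;\chi) = Z_1(z;\chi)$, exactly as in (\ref{repr formula Z_1}); by (\ref{estimate Z_1}), (\ref{estimate mu}), (\ref{BLU (1.6)}) and the Dini condition (\ref{Dini omega}), the product $Z_1(z;\chi)\,\mu(\chi;\zeta)$ is integrable over $\mathbb{R}^n\times(\tau,t)$, so the limit may be dropped. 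Hence $H\Gamma(z;\zeta) = Z_1(z;\zeta) - \mu(z;\zeta) + \int_{\mathbb{R}^n\times(\tau,t)}Z_1(z;\chi)\mu(\chi;\zeta)\,d\chi = 0$ by Corollary \ref{eq satisfies by mu}.

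The main obstacle is precisely this last step: the limit-defined integrals in (\ref{X_i X_j J}) and (\ref{dJ/dt}) are not individually absolutely convergent near $\eta=t$ (their integrands decay only like $(t-\eta)^{-1}$), and it is only their combination through the frozen equation $H_\chi\Gamma_\chi=0$ that produces the extra factor $\omega(c_1\sqrt{t-\eta})$ — via the Dini continuity of the $a_{ij}$ and (\ref{simil subadditivity}) — which makes the remaining integral convergent and lets the limit pass. Once this cancellation is exploited, the identity is just a restatement of Corollary \ref{eq satisfies by mu}.
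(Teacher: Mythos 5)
Your proposal is correct and follows essentially the same route as the paper: decompose $\Gamma = \Gamma_\zeta + J$, read the Gaussian bounds off Theorems \ref{estimate freezed gamma} and \ref{Lie derivatives J} (absorbing the bounded factor $\widetilde{\omega}(c_1\sqrt{t-\tau})$ into $C(T)$), and obtain $H\Gamma=0$ by combining the representation formulas for $X_iX_jJ$ and $\partial_tJ$ with $H_\chi\Gamma_\chi(\cdot;\chi)=0$ so that the truncated integrand collapses to $Z_1(z;\chi)\,\mu(\chi;\zeta)$, whose absolute integrability lets the limit be dropped and Corollary \ref{eq satisfies by mu} conclude. Your explicit treatment of the regularity across $\{t=\tau,\ x\neq\xi\}$ and your identification of the cancellation that restores integrability near $\eta=t$ are correct details that the paper leaves implicit.
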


\vspace{2mm}

\begin{proof}
Everything except (\ref{H Gamma = 0}) is straightforward from the properties of the parametrix $\Gamma_{\chi}$ and the integral term $J$ (see Theorems \ref{properties freezed gamma}-\ref{estimate freezed gamma}-\ref{Lie derivatives J}, Corollary \ref{parametrix is continuous and its 2-derivs has null integral} and Proposition \ref{continuity of J}). As for (\ref{H Gamma = 0}), for $t \leq \tau$ the thesis is immediate; for $t>\tau$, combining (\ref{X_j J}), (\ref{X_i X_j J}), (\ref{dJ/dt}) with (\ref{definition Z_1}) and the fact that
    \begin{equation*}
        \int_{t - \epsilon}^{t} d\eta \int_{\mathbb{R}^n} \left|X_j \gamma_{(y,\eta)} (y^{-1} \circ x, t - \eta) \, \mu(y,\eta; \xi,\tau) \right| \, dy \to 0 \quad \text{as} \, \epsilon \to 0^+
    \end{equation*}
    (which has been shown in the proof of Theorem \ref{Lie derivatives J}) we immediately get
    \begin{align*}
        & H \Gamma(z; \zeta) = H \Gamma_{\zeta} (z; \zeta) + \mu(z; \zeta) \, +  \lim_{\epsilon \to 0^+} \int_{\tau}^{t-\epsilon} \int_{\mathbb{R}^n} H \Gamma_{\chi} (z; \chi) \, \mu(\chi; \zeta) \, d\chi
        \\
        & \quad = Z_1(z; \zeta) + \mu(z; \zeta) \, + \lim_{\epsilon \to 0^+} \int_{\tau}^{t-\epsilon} \int_{\mathbb{R}^n} Z_1 (z; \chi) \, \mu(\chi; \zeta) \, d\chi.
    \end{align*}
    Note that, given any $z, \, \zeta \in \mathbb{R}^{n+1}$ with $t>\tau$, the map $Z_1(z;\cdot) \, \mu(\cdot; \zeta)$ belongs to $L^1(\mathbb{R}^n \times (\tau,t))$ in view of (\ref{estimate mu}), (\ref{est Z_j without lambda}), thus by Corollary \ref{eq satisfies by mu}
    \begin{align*}
        & H \Gamma(z; \zeta) = Z_1(z; \zeta) + \mu(z; \zeta) + \iint_{\mathbb{R}^n \times (\tau,t)} Z_1 (z; \chi) \, \mu(\chi; \zeta) \, d\chi = 0 .
    \end{align*}
\end{proof}

\vspace{2mm}

\section{Cauchy problem for H and properties of $\Gamma$}
\label{sec 4}

In this section we use the fundamental solution $\Gamma$ previously constructed to build a solution for the Cauchy problem

\vspace{1mm}

\begin{equation}
\label{Cauchy pb H}
    \begin{cases}
        Hu = f \hspace{1.3cm} \text{in} \, \mathbb{R}^n \times (0,T) \\
        u(\cdot,0) = g \hspace{9.6mm} \text{in} \,\mathbb{R}^n 
    \end{cases}
\end{equation}
under suitable assumptions about the regularity and growth of both $f$ and $g$ and with $T>0$ small enough.
We also exploit the result concerning the homogeneous version of (\ref{Cauchy pb H}) (see Theorem \ref{thm homogeneous Cauchy pb} below) to prove lower Gaussian estimates for $\Gamma$, thus concluding the proof of Theorem \ref{existence and gaussian estimates fund sol H}.

\vspace{2mm}

\begin{definition}
\label{def concept of solution}
    By \emph{solution} of the problem (\ref{Cauchy pb H}) we mean a function $u : \mathbb{R}^n \times (0,T) \to \mathbb{R}$ with the following properties:
    \begin{enumerate}
        \item [(i)] $u \in \mathfrak{C}^2(\mathbb{R}^n \times (0,T))$ (see Definition \ref{def of C2});
        \item [(ii)] $u$ satisfies the equation $Hu=f$ pointwise, that is \\
        $Hu(x,t) = f(x,t) \quad \forall (x,t) \in \mathbb{R}^n \times (0,T)$;
        \item[(iii)] the initial datum $g$ is assumed as a pointwise limit, namely for every $x_0 \in \mathbb{R}^n$ it holds that \\
        $u(x,t) \longrightarrow g(x_0) \quad \text{as} \:\, {d}_p((x,t),(x_0,0)) = d(x,x_0) + \sqrt{|t|} \to 0$.
    \end{enumerate}
\end{definition}

\subsection{Homogeneous Cauchy problem}
\label{subsec 4.1}
Let us start with the solution of the homogeneous version of (\ref{Cauchy pb H}):

\vspace{2mm}

\begin{theorem}[Solution of the homogeneous Cauchy problem]
\label{thm homogeneous Cauchy pb}
    Let $\nu \geq 0$ and let $g : \mathbb{R}^n \to \mathbb{R}$ be a continuous function subject to the growth condition
    \begin{equation}
    \label{growth condition g}
        |g(x)| \leq M e^{\nu  ||x||^2} \quad
        \forall \, x \in \mathbb{R}^n
    \end{equation}
    for some constant $M>0$. Then there exists $\delta > 0$ such that, for any $T>0$ satisfying $T \nu<\delta$, the function $u:\mathbb{R}^n \times (0,T] \to \mathbb{R}$ given by
    \begin{equation*}
        u(x,t) := \int_{\mathbb{R}^n} \Gamma(x,t; y,0) \, g(y) \, dy
    \end{equation*}
    belongs to $\mathfrak{C}^2 (\mathbb{R}^n \times (0,T)) \cap C(\mathbb{R}^n \times [0,T])$ and solves the Cauchy problem
    \begin{equation*}
        \begin{cases}
            Hu = 0 \hspace{1.3cm} \, \text{in} \: \mathbb{R}^n \times (0,T)\\
            u (\cdot,0) = g \hspace{1cm} \text{in} \: \mathbb{R}^n.
        \end{cases}
    \end{equation*}
    In particular, if $g$ is bounded, all the above holds for any $T>0$.
\end{theorem}

\vspace{2mm}

\begin{proof}
    By (\ref{estimate X_j Gamma}), (\ref{estimate X_i X_j Gamma}) and (\ref{growth condition g})), dominated convergence gives
    \begin{align*}
    & X_j u(x,t) = \int_{\mathbb{R}^n} (X_j^x \Gamma)(x,t; y,0) \, g(y) \, dy \quad (\forall 1 \leq j \leq m);\\
    & X_i^x X_j^x u(x,t) = \int_{\mathbb{R}^n} (X_i^x X_j^x \Gamma)(x,t; y,0) \, g(y) \, dy\quad (\forall 1 \leq i,j \leq m);\\
    & \partial _t u(x,t) = \int_{\mathbb{R}^n} (\partial _t \Gamma)(x,t; y,0) \, g(y) \, dy,
    \end{align*}
    for any $x \in \mathbb{R}^n, \, t>0$, hence $u \in \mathfrak{C}^2 (\mathbb{R}^n \times (0,T)) \cap C(\mathbb{R}^n \times (0,T])$ by (\ref{Gamma in C2}); moreover, the above and (\ref{H Gamma = 0}) yield
    \begin{equation*}
        H u(x,t) = \int_{\mathbb{R}^n} H \Gamma(x,t; y,0) \, g(y) \, dy = 0 \quad \text{in} \: \mathbb{R}^n \times (0,T).
    \end{equation*}
    We are left to discuss the initial condition. Denoting again
    \begin{align*}
        J(z; \zeta) := \iint_{\mathbb{R}^n \times (\tau,t)} \Gamma_{\chi}(z; \chi) \, \mu(\chi; \zeta) \, d\chi
    \end{align*}
    for any $z=(x,t), \, \zeta=(\xi,\tau) \in \mathbb{R}^{n+1}$ with $t>\tau$, we have
    \begin{align*}
        & |u(x,t) - g(x_0)| \leq \int_{\mathbb{R}^n} |J(x,t; y,0) \, g(y)| \, dy
        + \Bigg| \int_{\mathbb{R}^n} \gamma_{(x_0,0)} (y^{-1} \circ x, t) \, g(y) \, dy -
        \\
        &  g(x_0) \Bigg| + \int_{\mathbb{R}^n} \left| \left[\gamma_{(y,0)} (\xi^{-1} \circ x, t) - \gamma_{(x_0,0)} (y^{-1} \circ x, t)) \right] \, g(y) \right| \, dy = W_1 + W_2 + W_3.
    \end{align*}
    From (\ref{BLU (2.2)}), (\ref{estimate J}) and (\ref{growth condition g}) we have
    \begin{align*}
        & W_1 \leq c(M, T) \, \widetilde{\omega} (2 \sqrt{t}) \int_{\mathbb{R}^n} \gamma_0(y^{-1} \circ x, c_1 t) \, e^{\nu ||y||^2} \, dy 
        \nonumber \\
        & \quad \leq c(M,T) \, \widetilde{\omega}(2 \sqrt{t}) \,t^{-\frac{Q}{2}} \int_{\mathbb{R}^n} \exp\left\{ -\frac{d(x,y)^2}{c_2 t} + \nu ||y||^2 \right\} \, dy. 
    \end{align*}
    By quasi-triangle inequality $||y|| \leq \kappa \, ( ||y^{-1} \circ x|| + ||x|| )$, hence 
    \begin{align*}
        ||y||^2 \leq 2\kappa^2 \, (||y^{-1} \circ x||^2 + ||x||^2) = 2\kappa^2(d(x,y)^2+||x||^2);
    \end{align*}
    assuming $T \nu < \delta := (4 c \kappa^2)^{-1}$, one easily gets
    \begin{align*}
        -\frac{d(x,y)^2}{c_2 t} + \nu ||y||^2 \leq -\frac{d(x,y)^2}{c_2 t} \left[ 1-2c_2 t\nu \kappa^2 \right] + 2\nu ||x||^2 \leq -\frac{d(x,y)^2}{2c_2 t} + 2 \nu \kappa^2 ||x||^2
    \end{align*}
    for all $x,y \in \mathbb{R}^n$ and $0<t \leq T$, and so by (\ref{BLU (1.5)}):
    \begin{align*}
        & W_1 \leq c(M,T) \, \widetilde{\omega} (2 \sqrt{t}) \, e^{2 \nu \kappa^2  ||x||^2} \int_{\mathbb{R}^n} t^{-\frac{Q}{2}} \exp\left\{ -\frac{d(x,y)^2}{2c_2 t} \right\} \, dy \\
        & \quad \leq c(M,T) \, \widetilde{\omega}(2 \sqrt{t}) \, e^{2 \nu \kappa^2 ||x||^2}
        \longrightarrow 0 \quad \text{as} \: (x,t) \to (x_0,0).
    \end{align*}
    As for $W_2$, from \ref{hom Cauchy problem freezed operator} there holds
    \begin{equation*}
        \int_{\mathbb{R}^n} \gamma_{(x_0,0)} (y^{-1} \circ x, t) \, g(y) \, dy \longrightarrow g(x) \quad \text{as} \: t \to 0^+, \, \forall x \in \mathbb{R}^n,
    \end{equation*}
    hence it is immediate to deduce that as $(x,t) \to (x_0,0)$ we have
    \begin{equation*}
        W_2 \leq \left| \int_{\mathbb{R}^n} 
        \gamma_{(x_0,0)} (y^{-1} \circ x, t) \, g(y) \, dy - g(x) \right| + \left|g(x)-g(x_0) \right| \longrightarrow 0.
    \end{equation*}
    As for $W_3$, let $\sigma \in (0,1)$ to be chosen later; then we split
    \begin{align*}
        & W_3 = \left(\int_{d(x,y) \leq \sigma} + \int_{d(x,y) > \sigma} \right) \left| \left[\gamma_{(y,0)} (y^{-1} \circ x, t) - \gamma_{(x_0,0)} (y^{-1} \circ x, t)) \right] g(y) \right| \, dy\\
        & \quad = W_3^1 + W_3^2.
    \end{align*}
    The integration domain of $W_3^1$ is contained in the closed ball $\overline{B_r(x_0)}$ with radius $r={\kappa[d(x,x_0)+1]}$ by quasi-triangle inequality, hence $g$ is bounded (uniformly w.r.t. $\sigma$) therein. Then by (\ref{BLU (1.5)}), (\ref{BLU (2.4) with gamma}):
    \begin{align*}
        & W_3^1 \leq \check{c}_0 \int_{d(x,y)<\sigma} \omega (d(x_0,y)) \,\gamma_0 (y^{-1} \circ x, \check{c}_0 t)) \, dy \leq \check{c}_0 \omega (\kappa [d(x,x_0) + \sigma]).
    \end{align*}
   for all $\sigma \in (0,1)$. Instead, using (\ref{BLU (2.3) with gamma}) and then proceeding as for $W_1$
    \begin{align*}
        & W_3^2 \leq 2M \check{c}_0 \int_{d(x,y)>\sigma} 
        \gamma_0(y^{-1} \circ x, \check{c}_0 t) \, e^{\nu ||y||^2} \, dy
        \\
        & \quad \leq c(M,\nu,x_0) \, e^{2 \nu \kappa^2} \int_{d(x,y)>\sigma} t^{-\frac{Q}{2}} \, \exp \left\{ - \frac{d(x,y)^2}{2c_1 t} \right\} \, dy.
    \end{align*}
    Setting $y \mapsto \xi = \delta_{1/\sqrt{t}} (y^{-1} \circ x)$, which yields $d\xi = t^{-Q/2} \, dy$, we get
    \begin{align*}
        W_3^2 \leq c(M, \nu, x_0) \int_{||\xi||> \frac{\sigma}{\sqrt{t}}} \exp \left\{ - \frac{||\xi||^2}{2c_1} \right\} \, d\xi,
    \end{align*}
    which gives us the following:
    \begin{equation*}
        W_3 \leq C(M,\nu,x_0) \left[ \omega (\kappa [d(x,x_0) + \sigma]) + \int_{||\xi||> \frac{\sigma}{\sqrt{t}}} \exp \left\{ - \frac{||\xi||^2}{2c_1} \right\} \, d\xi \right].
    \end{equation*}
    Given $\epsilon > 0$, by continuity of $\omega_g$ we can choose $\delta_x>0$ such that $\omega_g (\kappa {\delta}_x) < \epsilon/4$ and then $0<\sigma<1$ such that $\omega_g (\kappa \left[{\delta}_x +{\sigma} \right]) < \epsilon/2$.
    Moreover, by Lebesgue's theorem there exists $\delta_t > 0$ such that
    \begin{equation*}
        \int_{||\xi|| > \sigma /\sqrt{\delta_t}} \, \exp \left\{ - \frac{||\xi||^2}{2c_1} \right\} \, d\xi < \frac{\epsilon}{2},
    \end{equation*}
    thus for any $(x,t) \in B(x_0, \delta_x) \times (0, \delta_t)$ it holds
    $W_3 \leq C(M,\nu,x_0) \, {\epsilon}$, which by arbitrariness of ${\epsilon}>0$ gives $W_3 \longrightarrow 0$ as $(x,t) \to (x_0,0)$.
\end{proof}

\vspace{2mm}

\subsection{Maximum principles and lower Gaussian estimates}

Let us introduce some maximum principles for the operator $H$, which will be useful both to prove the nonnegativity of the fundamental solution $\Gamma$ and to get uniqueness of solutions for the associated Cauchy problem (in suitable spaces); the latter, in turn, will reveal crucial to obtain the reproduction property (\ref{reproduction formula fund sol}) of $\Gamma$ and, consequently, also the lower Gaussian estimate in (\ref{two sided gauss est Gamma}).

\vspace{2mm}

\begin{theorem}[Weak maximum principle in bounded sets]
    \label{max principle bounded sets}
    Let $\Omega$ be an open bounded subset of $\mathbb{R}^{n+1}$ and let $t_0 \in \mathbb{R}$. Then:
    \begin{equation*}
        \begin{cases}
            u \in \mathfrak{C}^2(\Omega) \\
            Hu \leq 0 \hspace{1.45cm} \text{in} \: \Omega \cap \{t<t_0\} \\
            \limsup u \leq 0 \hspace{5mm} \text{in} \: \partial \Omega \cap \{t \leq t_0\}
        \end{cases}
        \implies \quad u \leq 0 \:\, \text{in}  \:\, \Omega \cap \{t<t_0\}.
    \end{equation*}
    (by \say{$\limsup u \leq 0$ in $\partial\Omega \cap \{t \leq t_0\}$} we mean that, for any $(\overline{x},\overline{t}) \in \partial \Omega \cap \{t \leq t_0\}$ and any sequence $\{(x_n,t_n)\}_{n \geq 1} \subset \Omega \cap \{t \leq t_0\}$ converging to $(\overline{x}, \overline{t})$, there holds $\limsup_{n \to \infty} u(x_n,t_n) \leq 0$).
\end{theorem}

\vspace{2mm}

For the proof see \cite[Theorem 4.3]{BLU_Levi_ultraparabolic} (keeping in mind that, in the latter, the opposite sign convention for $H$ is used). Denoting by $C_*(\Omega)$ the space of continuous functions in $\Omega$ which vanish at infinity, an immediate corollary is the following:

\vspace{2mm}

\begin{corollary}[Weak maximum principle in infinite strips]
\label{max principle infinite strips}
    Given $T_1<T_2$, there holds:
    \begin{equation*}
        \begin{cases}
            u \in \mathfrak{C}^2(\mathbb{R}^n \times (T_1,T_2)) \cap C_*(\mathbb{R}^n \times [T_1,T_2]) \\
            Hu=0 \hspace{1cm} \text{in} \: \mathbb{R}^n \times (T_1,T_2)\\
            u=0 \hspace{1.3cm} \text{on} \: \mathbb{R}^n \times \{T_1\}
        \end{cases} \implies
        u=0 \:\, \text{in} \:\, \mathbb{R}^n \times (T_1,T_2).
    \end{equation*}
\end{corollary}

\vspace{2mm}

\begin{proof}
    Since $u$ vanishes at infinity, for any $\epsilon>0$ there exists $M_{\epsilon}>0$ such that
    \begin{equation*}
        \sup_{||x||>M_{\epsilon}, \, t \in (T_1,T_2)} u(x,t)<\epsilon.
    \end{equation*}
    Let $M>M_{\epsilon}$, $\Omega:=\{||x||<M\} \times (T_1,T_2)$ and $t_0 \in (T_1,T_2)$. Consider the function $w_{\epsilon}(z):=u(z)-\epsilon$, $z \in \mathbb{R}^n \times (T_1,T_2)$:
    \begin{equation*}
    \begin{cases}
        w_{\epsilon} \in \mathfrak{C}^2(\mathbb{R}^n \times (T_1,T_2)); \\
        Hw_{\epsilon}=0 \quad \text{in} \: \mathbb{R}^n \times (T_1,T_2)\\
        w_{\epsilon} \leq 0 \hspace{7mm} \text{in} \: \partial \Omega \cap \{t \leq t_0\}=\{||x||=M\} \times (T_1,t_0),
    \end{cases}
    \end{equation*}
    thus $w_{\epsilon} \leq 0$ in $\{||x||<M\} \times (T_1,t_0)$  by Theorem (\ref{max principle bounded sets}). By arbitrariness of $M>M_{\epsilon}$ and $t_0 \in (T_1,T_2)$, it immediately follows that $w_{\epsilon} \leq 0$ in $\mathbb{R}^n \times (T_1,T_2)$. Then, since this holds for any $\epsilon>0$ and the sequence $\{w_{\epsilon}\}_{\epsilon>0}$ converges uniformly to $u$ in $\mathbb{R}^n \times (T_1,T_2)$ as $\epsilon \to 0^+$, we get that $u \leq 0$ in $\mathbb{R}^n \times (T_1,T_2)$. \\Note now that also $-u$ vanishes at infinity, hence all the above procedure can be analogously applied to $-u$, yielding $-u \leq 0$ in $\mathbb{R}^n \times (T_1,T_2)$, namely $u \geq 0$ in $\mathbb{R}^n \times (T_1,T_2)$. Combining the two results the thesis is achieved.
\end{proof}

\vspace{2mm}

It is immediate to deduce the following uniqueness result for solutions of (\ref{Cauchy pb H}) vanishing at infinity (here below $C_*(\Omega)$ stands for the space of continuous functions in $\Omega$ which vanish at infinity):

\vspace{2mm}

\begin{corollary}
\label{uniqueness of vanishing sol of PC}
    Let $f:\mathbb{R}^{n+1} \to \mathbb{R}$ and $g : \mathbb{R}^n \to \mathbb{R}$ be continuous functions. Then, for any $T_1<T_2$, the solution of the Cauchy problem
    \begin{equation*}
        \begin{cases}
            Hu=f \hspace{0.9cm} \text{in} \: \mathbb{R}^n \times (T_1,T_2)\\
             u(\cdot,T_1)=g \quad \text{in} \: \mathbb{R}^n,
        \end{cases}
    \end{equation*}
    if it exists, is unique in the space $\mathfrak{C}^2(\mathbb{R}^n \times (T_1,T_2)) \cap C_*(\mathbb{R}^n \times [T_1,T_2])$.
\end{corollary}

\vspace{2mm}

\begin{proof}
    Let and $v, \, w \in \mathfrak{C}^2(\mathbb{R}^n \times (T_1,T_2)) \cap C_*(\mathbb{R}^n \times [T_1,T_2])$ be both solutions of (\ref{Cauchy pb H}). Then the function $\psi := v-w$ clearly belongs to the same regularity class and satisfy $H \psi=0$ in $\mathbb{R}^n \times (T_1,T_2)$ and $\psi(\cdot,T_1) \equiv 0$; hence by Corollary \ref{max principle infinite strips} there holds $\psi \equiv 0$ in $\mathbb{R}^n \times (T_1,T_2)$, namely $v=w$ in $\mathbb{R}^n \times (T_1,T_2)$.
\end{proof}

\vspace{2mm}

Coming back to the fundamental solution $\Gamma$, we are now ready to show that:

\vspace{2mm}

\begin{theorem}
\label{nonnegativity and reproduction}
    The fundamental solution $\Gamma$ is nonnegative in $(\mathbb{R}^{n+1} \times \mathbb{R}^{n+1}) \setminus D$ and the reproduction property (\ref{reproduction formula fund sol}) holds true.
\end{theorem}

\vspace{2mm}

\begin{proof}
   Assume by contradiction that $\Gamma(x_0,t_0;\xi_0,\tau_0)<0$ for some $(x_0,t_0;\xi_0,\tau_0) \notin D$; then $\Gamma(x_0,t_0; \cdot, \tau_0)<0$ in $B_{\epsilon}(\xi_0)$ for some $\epsilon>0$, by continuity of $\Gamma$. Let $T>0$ be small enough and $g \neq 0$ be a nonnegative smooth function in $\mathbb{R}^n$ with support in $B_{\epsilon}(\xi_0)$; then, by Theorem \ref{thm homogeneous Cauchy pb}, the function
    \begin{equation*}
        u(x,t)=\int_{\mathbb{R}^n} \Gamma\left(x,t; \xi,\tau_0-\frac{T}{2}\right) \, g(\xi) \, d\xi, \quad (x,t) \in \mathbb{R}^{n+1} 
    \end{equation*}
    is well defined and solves 
    \begin{equation*}
        \begin{cases}
            Hu=0 \hspace{8mm} \text{in} \: \mathbb{R}^n \times (\tau_0-\frac{T}{2},\tau_0+\frac{T}{2})\\
            u(\cdot,\tau_0-\frac{T}{2})=g \quad \text{in} \: \mathbb{R}^n.
        \end{cases}
    \end{equation*}
    Moreover, by dominated convergence and Theorem \ref{properties freezed gamma}
    \begin{align*}
        \lim_{||x|| \to +\infty} u(x,t)=\int_{\mathbb{R}^n} \lim_{||x|| \to +\infty} \Gamma\left(x,t; \xi,\tau_0-\frac{T}{2} \right) \, g(\xi) \, d\xi =0.
    \end{align*}
    Hence $Hu=0$ in $\mathbb{R}^n \times (\tau_0-T/2,\tau_0+T/2)$ and $\liminf u \geq 0$ on $\mathbb{R}^n \times \{\tau_0-T/2\}$ and at infinity, which (by Corollary \ref{max principle infinite strips}) yields $u \geq 0$ in $\mathbb{R}^n \times (\tau_0-T/2,\tau_0+T/2)$.\\
    However, since $supp(g) \subset B_{\epsilon}(\xi_0)$ but $g$ does not vanish everywhere, we have
    \begin{equation*}
        u(x_0,t_0)=\int_{B_{\epsilon}(\xi_0)} \Gamma\left(x_0,t_0; \xi,\tau_0 - \frac{T}{2} \right) g(\xi) \, d\xi <0,
    \end{equation*}
    which is a contradiction. Thus $\Gamma$ is nonnegative outside $D$.
    As for the reproduction property (\ref{reproduction formula fund sol}), let us fix $s \in \mathbb{R}, \, \xi \in \mathbb{R}^n, \, \tau<s$; consider the Cauchy problem
    \begin{equation*}
        \begin{cases}
            Hu=0 \hspace{1cm} \text{in} \: \mathbb{R}^n \times (s,s+T),\\
            u(\cdot,s)=g \hspace{6mm} \text{in} \: \mathbb{R}^n,
        \end{cases}
    \end{equation*}
    with $g=\Gamma(\cdot,s; \xi,\tau)$. Note that, since $s>\tau$, then $g$ is well defined and continuous on $\mathbb{R}^n$; moreover, in view of (\ref{BLU (2.2)}) and the fact that $s$ and $\tau$ are fixed and distinct, $g$ is also bounded. By Theorem \ref{thm homogeneous Cauchy pb} it follows that the function
    \begin{equation*}
        \psi(x,t; s,\xi,\tau):=\int_{\mathbb{R}^n} \Gamma(x,t; y,s) \, g(y) \, dy =\int_{\mathbb{R}^n} \Gamma(x,t; y,s) \, \Gamma(y,s; \xi,\tau) \, dy
    \end{equation*}
    is well defined for all $x, \, \xi \in \mathbb{R}^n, \, t>\tau$, belongs to $\mathfrak{C}^2(\mathbb{R}^n \times (s,s+T))$ and solves the Cauchy problem above. Furthermore, by (\ref{estimate Gamma}) and dominated convergence
    \begin{align*}
        \lim_{||x|| \to +\infty} \psi(x,t; s, \xi,\tau) =  \int_{\mathbb{R}^n} \lim_{||x|| \to +\infty} \Gamma(x,t;y,s) \, \Gamma(y,s; \xi,\tau) \, dy =0.
    \end{align*}
    However also $\Gamma(x,t; \xi,\tau) \in \mathfrak{C}^2(\mathbb{R}^n \times (s,s+T))$, solves the Cauchy problem and vanishes at infinity (being the fundamental solution of $H$); so by Theorem \ref{uniqueness of vanishing sol of PC}
    \begin{equation*}
        \Gamma(x,t;\xi,\tau)=\psi(x,t; s,\xi,\tau)=\int_{\mathbb{R}^n} \Gamma(x,t; y,s) \, \Gamma(y,s; \xi,\tau) \, dy
    \end{equation*}
    for all $x, \, \xi \in \mathbb{R}^n$, $\tau<s<t<s+T$. By arbitrariness of $T>0$, we get (\ref{reproduction formula fund sol}).
\end{proof}

\vspace{2mm}

Let us now focus on proving a Gaussian estimate from below for the fundamental solution $\Gamma$: first we prove it in local form, namely for points $(z; \zeta)$ \say{sufficiently close} to the diagonal $D$; then we extend the result to the whole space by (\ref{reproduction formula fund sol}), following a geometric idea used in \cite[Theorem 2.2]{Harnack_inequality}.

\vspace{2mm}

\begin{theorem}[Lower Gaussian estimate for the fundamental solution]
\label{lower gaussian estimate gamma}
    For any $T>0$ there exists a constant $\overline{\overline{c}}=\overline{\overline{c}}(T)>0$ such that, for every $(x,t), (\xi,\tau) \in \mathbb{R}^{n+1}$ with $0<t-\tau \leq T$, the following uniform Gaussian estimate holds true:
    \begin{equation*}
        \Gamma(x,t; \xi,\tau) \geq \overline{\overline{c}}^{-1} (t-\tau)^{-Q/2} \exp\left\{ -\overline{\overline{c}} \, \frac{d(x,\xi)^2}{t-\tau} \right\}.
    \end{equation*}
\end{theorem}

\vspace{2mm}

\begin{proof}
    Recall that $\Gamma(x,t; \xi,\tau) = \Gamma_{(\xi,\tau)}(x,t; \xi,\tau) + J(x,t; \xi,\tau)$, with the former term at right hand side bounded below by (\ref{BLU (2.2)}) and the latter bounded above by (\ref{estimate J}); by equivalence between $d$ and the CC distance $d_X$, namely 
    \begin{equation*}
        \beta^{-1} d_X(x,\xi)^2 \leq d(x,\xi)^2 \leq \beta d_X(x,\xi)^2 \quad \forall x,\xi \in \mathbb{R}^n
    \end{equation*}
    for a constant $\beta \geq 1$, the mentioned estimates can be written as
    \begin{align*}
        & \Gamma_{(\xi,\tau)}(x,t;\xi,\tau) \geq \overline{c}_0^{-1} \, (t-\tau)^{-Q/2} \, \exp\left\{ -\overline{c}_0 \beta \frac{d_X(x,\xi)^2}{t-\tau}\right\}; \\
        & |J(x,t; \xi,\tau)| \leq c(t-\tau)^{-Q/2} \,  \widetilde{\omega}(2 \sqrt{t-\tau}) \, \exp\left\{ -\frac{d_X(x,\xi)^2}{\widehat{c}_3 \beta (t-\tau)} \right\}
    \end{align*}
    (with the advantage that $d_X$ is a true distance). These yield
    \begin{align*}
        & \Gamma(x,t; \xi,\tau) \geq \overline{c}_0^{-1} (t-\tau)^{-Q/2} \, \exp\left\{  -\overline{c}_0 \frac{d_X(x,\xi)^2}{t-\tau} \right\} \left[1 - \overline{c}_0 c \, \widetilde{\omega}(2 \sqrt{t-\tau}) \times \right.\\
        & \quad \times \left. \exp\left\{ \left(\overline{c}_0-\frac{1}{\widehat{c}_3}\right) \frac{d_X(x,\xi)^2}{t-\tau} \right\} \right] =: \overline{c}_0^{-1} (t-\tau)^{-Q/2} \, \exp\left\{  -\overline{c}_0\frac{d_X(x,\xi)^2}{t-\tau} \right\} \times \\
        & \quad \times \left[1 - c_2 \widetilde{\omega}(2 \sqrt{t-\tau}) \exp\left\{ c_3 \frac{d_X(x,\xi)^2}{t-\tau}\right\} \right],
    \end{align*}
    We choose $0<\delta<\frac{1}{4}$ such that $c_3 \delta<1$ and $\widetilde{\omega}(2 \sqrt{\delta}) \leq \frac{1}{4c_2}$,
   which is clearly possible in view of the properties of a modulus of continuity. Then for any $x,t,\xi,\tau$ with $0<t-\tau \leq \delta$ and $d_X(x,\xi)^2 \leq \delta (t-\tau)$ there holds
    \begin{align*}
        1-c_2 \widetilde{\omega}(2 \sqrt{t-\tau}) \exp\left\{ c_3 \frac{d_X(x,\xi)^2}{t-\tau} \right\} \geq 1-c_2 \, \widetilde{\omega}(2 \sqrt{\delta}) \, e^{c_3 \delta}>\frac{1}{4},
    \end{align*}
    thus
    \begin{align*}
        & \Gamma(x,t; \xi,\tau) \geq (4\overline{c}_0)^{-1} (t-\tau)^{-Q/2} \exp\left\{ -4\overline{c}_0 \frac{d_X(x,\xi)^2}{t-\tau} \right\}.
    \end{align*}
    Now, fixed any $T>0$, we show an analogous bound for all points $(x,t), (\xi,\tau) \in \mathbb{R}^{n+1}$ with $0<t-\tau \leq T$; it is not restrictive to assume $T \geq 1$. Let 
    \begin{equation*}
        k := \left\lfloor \max\left\{ \frac{T}{\delta}, \, \frac{4d_X(x,\xi)^2}{\delta(t-\tau)}\right\} \right\rfloor + 1, \quad \sigma:=\frac{1}{4} \sqrt{\delta \frac{t-\tau}{k+1}}.
    \end{equation*}
    Given $\epsilon>0$, take an absolutely continuous curve $\gamma : [0,1] \to \mathbb{R}^n$ joining $x$ to $\xi$ with length smaller than $d_X(x,\xi)+\epsilon$, a chain of points $x \equiv x_0, \, x_1, \, ..., \, x_{k+1} \equiv \xi$ on $\gamma$ and a chain of times $t \equiv t_0>t_1>...>t_{k+1} \equiv \tau$ with
    \begin{align*}
        & d_X(x_j,x_{j+1})=\frac{d(x,\xi)+\epsilon}{k+1}, \quad t_j-t_{j+1} = \frac{t-\tau}{k+1} \quad (\forall 0 \leq j \leq k).
    \end{align*}
    Then, for any $0 \leq j \leq k$ and any $y \in B_{d_X}(x_j,\sigma), \, y' \in B_{d_X}(x_{j+1},\sigma)$, there holds
    \begin{align*}
        % & t_j-t_{j+1}<\frac{T}{k+1}<\delta; \\
        & d_X(y,y') \leq d_X(y,x_j)+d_X(x_j,x_{j+1})+d_X(x_{j+1},y') \leq \frac{d_X(x,\xi)+\epsilon}{k+1}+2\sigma\\
        & \quad < \frac{\frac{1}{2} \sqrt{k \delta (t-\tau)}}{k+1} + \frac{1}{2}\sqrt{\delta\frac{t-\tau}{k+1}}<\sqrt{\delta \frac{t-\tau}{k+1}}= \sqrt{\delta(t_j-t_{j+1})}
    \end{align*}
    (provided $\epsilon$ is small enough); since we also have $t_j-t_{j+1} \leq T/(k+1) < \delta$, then the lower Gaussian estimate holds for points $(y,t_j), (y',t_{j+1})$, i.e.
    \begin{align*}
        \Gamma(y,t_j;y',t_{j+1}) \geq c^{-1} (t_j-t_{j+1})^{-Q/2} \exp\left\{ -c \frac{d_X(y,y')^2}{t_j-t_{j+1}} \right\}
    \end{align*}
    for any constant $c \geq 4\overline{c}_0$. Setting $y_0=x$ and $y_{k+1}=\xi$ for homogeneity of notation), we iterate (\ref{reproduction formula fund sol}) and exploit the nonnegativity of $\Gamma$: 
    \begin{align*}
        & \Gamma(x,t; \xi,\tau)=\int_{(\mathbb{R}^n)^k} \Gamma(y_0,t_0; y_1,t_1) \, \Gamma(y_1,t_1; y_2,t_2) ... \Gamma(y_k,t_k; y_{k+1},t_{k+1}) \, dy_1 ... dy_k\\
        & \quad \geq \int_{\Omega_{\sigma}:=B_{d_X}(x_1,\sigma) \times ... \times B_{d_X}(x_k,\sigma)} \Gamma(x_0,t_0; y_1,t_1) ... \Gamma(y_k,t_k; x_{k+1},t_{k+1}) \, dy_1 ... dy_k\\
        & \quad \geq c^{-k-1}\left( \frac{t-\tau}{k+1} \right)^{-\frac{Q}{2}(k+1)} \int_{\Omega_{\sigma}} \exp\left\{ -c\sum_{j=0}^k \frac{d_X(y_j,y_{j+1})^2}{t_j-t_{j+1}} \right\} \, dy_1...dy_k\\
        & \quad \geq c^{-k-1}\left( \frac{t-\tau}{k+1} \right)^{-\frac{Q}{2}(k+1)} \exp\left\{ -c \delta (k+1) \right\} \, \left[|B_{d_X}(0,1) \right| \, \sigma^Q]^k \\
        & \quad = c^{-k-1} (t-\tau)^{-Q/2} (k+1)^{Q/2} |B_{d_X}(0,1)|^k 4^{-Qk} \,\exp\left\{ -c\delta (k+1) \right\} \\
        & \quad \geq c^{-1} (t-\tau)^{-
        Q/2} \exp\left\{-ck\right\} 
        \exp\left\{-c\delta(k+1)+ck-
        k\log\left(\frac{c \cdot 4^Q}
        {|B_{d_X}(0,1)|}\right)\right\}.
    \end{align*}
    We fix $c$ to a value $\overline{c}$ large enough for the argument of the last exponential to be positive whenever $k \geq 1$, so that $\Gamma(x,t; \xi,\tau) \geq \overline{c}^{-1}(t-\tau)^{-Q/2} \exp\{-\overline{c}k\}$; finally, we split in two cases depending on which one among $\frac{T}{\delta}$ and $\frac{4d_X(x,\xi)^2}{\delta(t-\tau)}$ is bigger: 
    \begin{itemize}
        \item [(i)] if $\frac{4d_X(x,\xi)^2}{t-\tau} \leq T$, then $k=\left\lfloor \frac{T}{\delta} \right\rfloor+1< 2 \frac{T}{\delta}$, so
        \begin{align*}
            & \Gamma(x,t; \xi,\tau) \geq \overline{c}^{-1} (t-\tau)^{-Q/2} \exp\left\{ -2\overline{c}T \delta^{-1} \right\} = [\overline{c} e^{-2\overline{c}T/\delta}]^{-1} (t-\tau)^{-Q/2} \\
            & \quad \geq [\overline{c} e^{-2\overline{c}T/\delta}]^{-1} (t-\tau)^{-Q/2} \exp\left\{ -[\overline{c} e^{-2\overline{c}T/\delta}] \frac{d(x,\xi)^2}{t-\tau} \right\};
        \end{align*}
        \item [(ii)] otherwise $\frac{4d_X(x,\xi)^2}{t-\tau} >
        T \geq 1$ and $k=\left\lfloor \frac{4d_X(x,\xi)^2}{\delta(t-\tau)} \right\rfloor +1 < \frac{8d_X(x,\xi)^2}{\delta(t-\tau)}$, thus (exploiting again equivalence between $d$ and $d_X$):
        \begin{align*}
            & \Gamma(x,t; \xi,\tau) \geq \overline{c}^{-1} (t-\tau)^{-Q/2} \exp\left\{ -\overline{c}\frac{8\beta d(x,\xi)^2}{\delta(t-\tau)} \right\} \\
            & \quad \geq [8\overline{c}\beta \delta^{-1}]^{-1} (t-\tau)^{-Q/2} \exp\left\{ -[8\overline{c}\beta  \delta^{-1}] \frac{d(x,\xi)^2}{t-\tau} \right\},
        \end{align*}
    \end{itemize}
    hence the thesis holds with $\overline{\overline{c}}=\overline{\overline{c}}(T)=\max\{\overline{c} e^{-2\overline{c}T/\delta}, \, 8\overline{c} \beta \delta^{-1}\}$.
\end{proof}

\vspace{2mm}

\subsection{Nonhomogeneous Cauchy problem}
\label{subsec 4.2}
\begin{lemma}
\label{regularity of V_f}
Let us assume that $\omega$ satisfies also the condition (H3). Let $\nu \geq 0$, let $T>0$ be small enough. Let $f \in C(\mathbb{R}^n \times [0,T])$ be locally Dini continuous in $x$ uniformly w.r.t. $t$, namely for any $K \Subset \mathbb{R}^N$ the modulus of continuity
\begin{equation*}
    \omega_{f,K}(r):=\sup_{x,y \in K: \, d(x,y) < r} |f(x)-f(y)|
\end{equation*}
satisfies
\begin{align*}
    \sup_{t \in [0,T]} \int_{0}^r \frac{\omega_{f,K}(x)}{x} \, dx<+\infty.
\end{align*}
Assume moreover that $f$ is subject to the growth condition
    \begin{equation}
    \label{growth condition f}
        |f(x,t)| \leq M e^{\nu ||x||^2} \quad
        \forall \, x \in \mathbb{R}^n, \, t \in [0,T]
    \end{equation}
    for some constant $M>0$. Then the function $V_f:\mathbb{R}^n \times [0,T] \to \mathbb{R}$ given by
    \begin{equation*}
        V_f(x,t) = \iint_{\mathbb{R}^n \times (0,t)} \Gamma_{\zeta}(x,t;\zeta) f(\zeta) \, d\zeta \quad \text{if} \: t>0, \quad V_f(x,0)=0.
    \end{equation*}
    is well defined, belongs to $\mathfrak{C}^2(\mathbb{R}^n \times (0,T)) \cap C(\mathbb{R}^n \times [0,T])$ and
    \begin{equation}
    \label{H V_f}
        HV_f(x,t) = f(x,t) + \iint_{\mathbb{R}^n \times (0,t)} Z_1(x,t;\zeta) f(\zeta) \, d\zeta \quad \forall (x,t) \in \mathbb{R}^n \times (0,T).
    \end{equation}
\end{lemma}

\vspace{2mm}

\begin{proof}
Note that, except for the continuity of $V_f$ at $t=0$, the statement is the same as that of Theorem \ref{Lie derivatives J}, with $\tau=0$ and the integral map $\mu(\cdot;\zeta)$ replaced by $f$; thus, up to substituting the inequality (\ref{estimate mu}) with (\ref{growth condition f}) and the Dini-type estimate (\ref{estimate delta mu}) with the local Dini continuity of $f(\cdot,t)$, the same arguments can be repeated, yielding $V_f \in \mathfrak{C}^2(\mathbb{R}^n \times (0,T)) \cap C(\mathbb{R}^n \times (0,T])$ with
\begin{align*}
    HV_f(x,t)=f(x,t)+\iint_{\mathbb{R}^n \times (0,t)} [H\Gamma(\cdot;\zeta)](x,t) \, f(\zeta) \, d\zeta = f(x,t).
\end{align*}
    We are left to prove that $V_f(\cdot,t) \to 0$ as $t \to 0^+$; by quasi-triangle inequality
    \begin{align*}
        |f(\xi,\tau)| \leq Me^{\nu ||\xi||^2} \leq M e^{2 \nu \kappa^2 (||x||^2 + d(x,\xi)^2)} 
        % \leq M e^{2\nu \kappa^2} \exp\left\{ \frac{d(x,\xi)^2}{2 \check{c}_0 (t-\tau)} \right\}
    \end{align*}
    for all $x,\xi \in \mathbb{R}^n$ and $0<t-\tau \leq T$; so, in view also of (\ref{BLU (1.5)}), (\ref{BLU (2.3) with gamma}) we have
    \begin{align*}
        & 0 \leq |V_f(x,t)| \leq \check{c}_0 M e^{2 \nu \kappa^2 ||x||^2} \int_0^t ds \int_{\mathbb{R}^n}  \frac{\exp\left\{-\frac{d(x,\xi)^2}{\check{c}_0(t-\tau)}\left[ 1-2\nu \kappa^2 \check{c}_0 T \right] \right\}}{(t-\tau)^{Q/2}} d\xi.
    \end{align*}
    Thus, assuming $4 \nu \kappa^2 \check{c}_0 T<1$ (i.e., $T$ small enough), we get
    \begin{align*}
        & 0 \leq |V_f(x,t)| \leq \check{c}_0 M e^{2 \nu \kappa^2 ||x||^2} \int_0^t  \int_{\mathbb{R}^n} (t-\tau)^{-\frac{Q}{2}} \exp\left\{ -\frac{d(x,\xi)^2}{2\check{c}_0(t-\tau)} \right\} d\xi
        \\
        & \quad \leq \check{c}_0 M e^{2 \nu \kappa^2 ||x||^2} \int_0^t ds \int_{\mathbb{R}^n} \gamma_0(\xi^{-1} \circ x, 2\check{c}_0 (t-\tau)) \, d\xi = c(M) e^{2 \nu \kappa^2 ||x||^2} t \to 0
    \end{align*}
    as $t \to 0^+$ (for every $x \in \mathbb{R}^n$), which concludes the proof.
\end{proof}

\vspace{2mm}

\begin{lemma}
\label{regularity f tilde}
    Let $f : \mathbb{R}^n \times [0,T] \to \mathbb{R}$ be a continuous function subject to the growth condition (\ref{growth condition f}) for some constants $\nu>0, \, M>0$. Then the function
    \begin{equation*}
        \widetilde{f}(z) := \iint_{\mathbb{R}^n \times (0,t)} \mu(z; \zeta) f(\zeta) \, d\zeta, \quad z \in \mathbb{R}^n \times [0,T]
    \end{equation*}
    is well-defined, continuous w.r.t. $(x,t)$ and locally Dini continuous in $x$, uniformly w.r.t. $t \in [0,T]$. Furthermore, $\widetilde{f}$ is subject to the growth condition
    \begin{equation*}
        |\widetilde{f}(x,t)| \leq c \, \widetilde{\omega}(2\sqrt{T}) e^{2 \nu \kappa^2 ||x||^2} \quad \forall (x,t) \in \mathbb{R}^n \times [0,T]
    \end{equation*}
    for a constant $c=c(M,T)>0$.
\end{lemma}

\vspace{2mm}

\begin{proof}
    Given $T>0$, $K \Subset \mathbb{R}^n $ (compact) and fixed $x,x' \in K$ and $t,t+h \in (0,T)$, we study the oscillation of $\widetilde{f}$ in space and time separately. As for the former:
    \begin{align*}
        |\widetilde{f}(x,t)-\widetilde{f}(x',t)| \leq \iint_{\mathbb{R}^n \times (0,t)} |\mu(x,t;\zeta)-\mu(x',t;\zeta)| \cdot |f(\zeta)| \, d\zeta.
    \end{align*}
    The use of quasi-triangle inequality as in Theorem \ref{regularity of V_f} for both $x,x'$ gives
    \begin{align*}
        |f(\xi,\tau)| \leq M e^{\nu ||\xi||^2} \leq M e^{\nu \kappa^2 \cdot \min\{||x||^2+d(x,\xi)^2, \, ||x'||^2 + d(x',\xi)^2\}};
    \end{align*}
    moreover, starting from either (\ref{estimate delta mu}) if $t-\tau \geq d(x,x')^2$ or (\ref{estimate mu}) if $t-\tau < d(x,x')^2$, by monotonicity of $\omega$ and (\ref{BLU (2.2)}) one gets
    \begin{align*}
    & |\mu(x',t;\xi,\tau) - \mu(x,t;\xi,\tau)| \leq c(T) \, (t-\tau)^{-1-\frac{Q}{2}} \, \left[\omega^{1/2}(2d(x,x')) \, \omega^{1/2}(2\sqrt{t-\tau}) \right.
    \\
    & \quad \left.+ \widetilde{\omega}(2\sqrt{t-\tau}) \, \widetilde{\omega}(2d(x,x')) \right] \cdot \left[\exp\left\{ -\frac{d(x,\xi)^2}{\widetilde{c}(t-\tau)} \right\} + \exp\left\{ -\frac{d(x',\xi)^2}{\widetilde{c}(t-\tau)} \right\} \right]
\end{align*}
provided $0<t-\tau \leq T$, for a constant $\widetilde{c}>0$. In compact notation:
\begin{align*}
    & |\mu(x,t;\zeta)-\mu(x',t;\zeta)| \leq c(T) (t-\tau)^{-1-\frac{Q}{2}} \omega_1(d(x,x')) \, \omega_1(\sqrt{t-\tau})[\Phi(x)+\Phi(x')], \\
    & \quad \text{where} \: \omega_1(r):=\omega^{1/2}(2r)+\widetilde{\omega}(2r), \quad \Phi:=\exp\left\{ -\frac{d(\cdot,\xi)^2}{\widetilde{c}(t-\tau)} \right\}
\end{align*}
Note that by (H2)-(H3) $\omega_1$ satisfies the Dini condition, and provided $T$ is small enough (i.e. $T<(4\widetilde{c} \nu \kappa^2)^{-1}$, as for the previous proof) we have
\begin{align*}
    \Phi(\cdot) \, |f(\xi,\tau)| \leq M \exp\left\{ -\frac{d(\cdot,\xi)^2}{\widetilde{c}(t-\tau)} (1-\widetilde{c} \nu \kappa^2 T) \right\} \leq M \exp\left\{ -\frac{d(\cdot,\xi)^2}{2\widetilde{c}(t-\tau)} \right\}
\end{align*}
in both $x,x'$; all the above combined with (\ref{BLU (1.5)}), (\ref{BLU (2.2)}) yield
    \begin{align}
        & \nonumber |\widetilde{f}(x',t) - \widetilde{f}(x,t)| \leq c_K  \omega_1(d(x,x')) \int_{0}^{t} \frac{\omega_1(\sqrt{t-\tau})}{t-\tau} \int_{\mathbb{R}^n} (t-\tau)^{-\frac{Q}{2}} \times \\
        & \nonumber \quad \times \sum_{y \in \{x,x'\}} \exp\left\{ -\frac{d(y,\xi)^2}{2\widetilde{c}(t-\tau)}\right\} d\xi \leq c_K \omega_1(d(x,x')) \times \int_{0}^{t} \frac{\omega_1(\sqrt{t-\tau})}{t-\tau} \, d\tau \\
        & \quad \label{space variation f tilde} \int_{\mathbb{R}^n} \sum_{y \in \{x,x'\}} \gamma_0(\xi^{-1} \circ y, \overline{c}_0 \widetilde{c}(t-\tau)) \, d\xi \leq c_K \omega_1(d(x,x')) \, 
    \end{align}
    for a constant $c_K=c(T,M,\nu,K)$,
    that is the $t$-uniform local Dini continuity of $\widetilde{f}(\cdot,t)$. We deduce the global continuity studying also the time oscillation:
    \begin{align*}
        & \widetilde{f}(x',t+h)-\widetilde{f}(x',t) = \iint_{\mathbb{R}^n \times (t,t+h)} \mu(x',t+h;\zeta) \, f(\zeta) \, d\zeta + 
        \\
        & \quad + \iint_{\mathbb{R}^n \times (0,t)} (\mu(x',t+h;\zeta) - \mu(x',t; \zeta)) \, f(\zeta) \, d\zeta = B_1(h) + B_2(h).
    \end{align*}
    Since $\mu(x',\cdot; \zeta)$ is continuous, as $h \to 0$ $B_2(h)$ vanishes by dominated convergence; moreover, by (\ref{BLU (1.5)}), (\ref{estimate mu}), (\ref{growth condition f}) and treating again the exponential $e^{\nu ||\xi||^2}$ via quasi-triangle inequality, one gets
 \begin{align*}
     & |B_1(h)| \leq c(T,M) e^{2\nu \kappa^2 ||x'||^2} \int_{t}^{t+h} \frac{\omega(2\sqrt{t+h-\tau})}{t+h-\tau} \, d\tau 
     \\
     & \quad \int_{\mathbb{R}^n} \gamma_0(\xi^{-1} \circ x', \overline{c}_0 \widetilde{c}(t+h-\tau)) \, d\xi = c(T,M) e^{2 \nu \kappa^2 ||x'||^2} \widetilde{\omega}(2\sqrt{h}) \longrightarrow 0 \quad \text{as} \: h \to 0,
 \end{align*}
 which combined with (\ref{space variation f tilde}) gives the continuity of $\widetilde{f}$ over $\mathbb{R}^n \times [0,T]$. Finally, repeating the last integral computation but with $\tau$ ranging in $(0,t)$, we get
    \begin{align*}
        & |\widetilde{f}(x,t)| \leq c(T,M) \, e^{2\nu \kappa^2 ||x||^2} \int_0^t \frac{\omega(2\sqrt{t-\tau})}{t-\tau} \, d\tau \leq c(T,M) \, \widetilde{\omega}(2 \sqrt{T}) \, e^{2\nu \kappa^2 ||x||^2}
    \end{align*}
    for all $x \in \mathbb{R}^n$ and $t>0$.
\end{proof}

\vspace{2mm}

\begin{theorem}
\label{solution non hom Cauchy pb, case g=0}
    Under the assumptions of Lemma \ref{regularity of V_f}, the function
    \begin{equation*}
        u(z) := \iint_{\mathbb{R}^n \times (0,t)} \Gamma(z; \zeta) \, f(\zeta) \, d\zeta \quad \forall z \in \mathbb{R}^n \times [0,T]
    \end{equation*}
    belongs to $\mathfrak{C}^2(\mathbb{R}^n \times (0,T)) \cap C(\mathbb{R}^n \times [0,T])$ and solves the Cauchy problem
    \vspace{1mm}
    \begin{equation*}
        \begin{cases}
            Hu=f \hspace{1.13cm} \text{in} \: \mathbb{R}^n \times (0,T) \\
            u(\cdot,0) = 0 \hspace{8mm} \text{in} \: \mathbb{R}^n.
        \end{cases}
    \end{equation*}
\end{theorem}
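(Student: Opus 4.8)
The plan is to reduce the statement to the two preceding lemmas by splitting off the parametrix. Writing $\Gamma(z;\zeta)=\Gamma_\zeta(z;\zeta)+J(z;\zeta)$ as in (\ref{def Gamma Levi method})--(\ref{definition J}) and using the definition of $V_f$ from Lemma \ref{regularity of V_f}, one has
\[
u(z) \;=\; -V_f(z)\;-\;\int_{\mathbb{R}^n\times(0,t)}J(z;\zeta)\,f(\zeta)\,d\zeta .
\]
Inserting the definition of $J$ into the last integral and exchanging the order of integration --- swapping the two time variables $\tau$ and $\eta$ over the region $\{0<\tau<\eta<t\}$ --- turns it into $\int_{\mathbb{R}^n\times(0,t)}\Gamma_\chi(z;\chi)\,\widetilde f(\chi)\,d\chi=V_{\widetilde f}(z)$, where $\widetilde f$ is the function of Lemma \ref{regularity f tilde}. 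Hence $u=-V_f-V_{\widetilde f}$, and everything reduces to applying Lemma \ref{regularity of V_f} twice. The interchange is legitimate (and gives at the same time absolute convergence of the integral defining $u$, via (\ref{estimate Gamma})) because, combining (\ref{BLU (1.6)}), (\ref{BLU (2.2)}), (\ref{estimate mu}), the growth condition (\ref{growth condition f}) and inequality (\ref{triangle inequality exp}), the map $(\zeta,\chi)\mapsto\Gamma_\chi(z;\chi)\,\mu(\chi;\zeta)\,f(\zeta)$ is absolutely integrable on $\{0<\tau<\eta<t\}$: integrating successively in $\xi$, in $y$, and then in the time variables one is left with $\int_\tau^t \frac{\omega(c_1\sqrt{t-\eta})\,\omega(c_1\sqrt{\eta-\tau})}{(t-\eta)(\eta-\tau)}\,d\eta$, finite by the Dini condition after splitting at the midpoint, multiplied by an exponential in $\|x\|^2$ that is controlled as soon as $T$ is small.

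With $u=-V_f-V_{\widetilde f}$ in hand, the regularity and initial-value assertions follow at once. Lemma \ref{regularity f tilde} certifies that $\widetilde f$ is continuous in $(x,t)$, locally Dini continuous in $x$ uniformly with respect to $t$, and subject to a Tychonoff bound, so Lemma \ref{regularity of V_f} applies to $\widetilde f$ as well as to $f$; consequently $V_f,V_{\widetilde f}\in\mathfrak{C}^2(\mathbb{R}^n\times(0,T))\cap C(\mathbb{R}^n\times[0,T])$, and so does $u$. Since $V_f$ and $V_{\widetilde f}$ both vanish at $t=0$ and are continuous up to $t=0$, we get $u(x,t)\to u(x_0,0)=0$ as $(x,t)\to(x_0,0)$, which is exactly condition (iii) of Definition \ref{def concept of solution} with $g\equiv 0$.

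It remains to check the equation. By linearity of $H$ and formula (\ref{H V_f}) applied to both $V_f$ and $V_{\widetilde f}$,
\[
Hu(z) \;=\; -HV_f(z)-HV_{\widetilde f}(z) \;=\; f(z)+\widetilde f(z)-\int_{\mathbb{R}^n\times(0,t)}Z_1(z;\zeta)\,[f(\zeta)+\widetilde f(\zeta)]\,d\zeta ,
\]
so that $Hu=f$ is equivalent to the identity $\widetilde f(z)=\int_{\mathbb{R}^n\times(0,t)}Z_1(z;\zeta)\,[f(\zeta)+\widetilde f(\zeta)]\,d\zeta$. Here Corollary \ref{eq satisfies by mu} is the decisive ingredient: substituting $\mu(z;\zeta)=Z_1(z;\zeta)+\int_{\mathbb{R}^n\times(\tau,t)}Z_1(z;\chi)\mu(\chi;\zeta)\,d\chi$ into the definition of $\widetilde f(z)$, integrating against $f(\zeta)$, and again swapping $\tau$ and $\eta$ (justified via (\ref{estimate Z_1}), (\ref{estimate mu}) and the growth of $f$ exactly as above) rewrites the resulting double integral as $\int_{\mathbb{R}^n\times(0,t)}Z_1(z;\chi)\,\widetilde f(\chi)\,d\chi$, yielding precisely the required identity. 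Thus $Hu=f$ in $\mathbb{R}^n\times(0,T)$, and the proof is complete.

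I expect the only genuine difficulty to be the bookkeeping in these two Fubini steps --- verifying absolute integrability of the relevant double integrals over the region $\{0<\tau<\eta<t\}$ and tracking the exponential growth factors through the $\xi$- and $y$-integrations (this is where the smallness of $T$ is used); the rest is a direct assembly of Lemma \ref{regularity of V_f}, Lemma \ref{regularity f tilde}, Corollary \ref{eq satisfies by mu} and the Gaussian estimates already established.
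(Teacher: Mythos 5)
Your proposal is correct and follows essentially the same route as the paper: the decomposition $u=-V_f-V_{\widetilde f}$ via a Fubini swap over $\{0<\tau<\eta<t\}$, the application of Lemma \ref{regularity f tilde} to legitimize using Lemma \ref{regularity of V_f} on $\widetilde f$, and the final use of Corollary \ref{eq satisfies by mu} to verify the equation. The only (cosmetic) difference is that you reduce $Hu=f$ to the identity $\widetilde f=\int Z_1(f+\widetilde f)$ rather than simplifying $HV_{\widetilde f}$ directly to $-\int Z_1 f$ as the paper does, and you are more explicit than the paper about the absolute-integrability checks behind the two interchanges.
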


\vspace{2mm}

\begin{proof}
    First of all, exploiting the decomposition (\ref{def Gamma Levi method}) we get
    \begin{align*}
        & u(z) = \iint_{\mathbb{R}^n \times (0,t)} \Gamma(z; \zeta) \, f(\zeta) \, d\zeta = \iint_{\mathbb{R}^n \times (0,t)} (\Gamma_{\zeta}(z; \zeta) + J(z;\zeta)) \, f(\zeta) \, d\zeta
        \\
        & = V_f(z) + \iint_{\mathbb{R}^n \times (0,t)} \left[ \iint_{\mathbb{R}^n \times (\tau,t)} \Gamma_{\chi}(z; \chi) \, \mu(\chi; \zeta) \, d\chi \right] \, f(\zeta) \, d\zeta
        \\
        & = V_f(z) + \iint_{\mathbb{R}^n \times (0,t)} \Gamma_{\chi}(z; \chi) \, \left[ \iint_{\mathbb{R}^n \times (0,\eta)} \mu(\chi; \zeta) \, f(\zeta) \, d\zeta \right] \, d\chi
        \\
        & = V_f(z) + \iint_{\mathbb{R}^n \times (0,t)} \Gamma_{\chi}(z; \chi) \, \widetilde{f}(\chi) \, d\chi = V_f(z) + V_{\widetilde{f}}(z).
    \end{align*}
    Applying Lemma \ref{regularity of V_f} on $\widetilde{f}$ (in view of Lemma \ref{regularity f tilde}) and exploiting Corollary \ref{eq satisfies by mu}
    \begin{align*}
        & H V_{\widetilde{f}}(z) =\widetilde{f}(z) + \iint_{\mathbb{R}^n \times (0,t)} Z_1(z;\zeta) \, \widetilde{f}(\zeta) \, d\zeta = \iint_{\mathbb{R}^n \times (0,t)} f(\chi) \mu(z;\chi) \, d\chi
        \\
        & \quad + \iint_{\mathbb{R}^n \times (0,t)} Z_1(z;\zeta) \left( \iint_{\mathbb{R}^n \times (0,\tau)} f(\chi) \mu(\zeta; \chi) \, d\chi \right) d\zeta\\
        & \quad = \iint_{\mathbb{R}^n \times (0,t)} f(\chi)  \left[\mu(z;\chi) + \iint_{\mathbb{R}^n \times (\eta,t)} Z_1(z;\zeta) \mu(\zeta; \chi) \, d\zeta \right] d\chi 
        \\
        & \quad = -\iint_{\mathbb{R}^n \times (0,t)} f(\chi) Z_1(z;\chi) \,  d\chi.
    \end{align*}
    Combining this last result with (\ref{H V_f}) we get $Hu = f \:\, \text{in} \, \mathbb{R}^n \times (0,T)$; the attainment of the initial condition follows directly from definition of $V_f$ and $V_{\widetilde{f}}$.
\end{proof}

\vspace{2mm}

\begin{theorem}
\label{solution general Cauchy pb}
    Under the assumptions of Theorem \ref{thm homogeneous Cauchy pb} and Lemma \ref{regularity of V_f}, the function 
    \begin{equation*}
        u(z) := \iint_{\mathbb{R}^n \times (0,t)} \Gamma(z; \zeta) f(\zeta) \, d\zeta + \int_{\mathbb{R}^n} \Gamma(z; \xi,0) g(\xi) \, d\xi, \quad z \in \mathbb{R}^n \times [0,T]
    \end{equation*}
    belongs to $\mathfrak{C}^2(\mathbb{R}^n \times (0,T)) \cap C(\mathbb{R}^n \times [0,T])$ and solves the Cauchy problem (\ref{Cauchy pb H}).
\end{theorem}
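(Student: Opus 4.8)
The plan is to obtain $u$ by \emph{superposition}, reducing everything to the two results already proved. Since $H$ is linear and both the class $\mathfrak{C}^2$ (Definition~\ref{def of C2}) and the notion of solution of Definition~\ref{def concept of solution} are stable under addition, it is enough to split $u$ into its homogeneous and nonhomogeneous parts. I would write $u = u_1 + u_2$ with
\[
u_1(z) := -\int_{\mathbb{R}^n \times (0,t)} \Gamma(z;\zeta)\, f(\zeta)\, d\zeta, \qquad u_2(z) := \int_{\mathbb{R}^n} \Gamma(z;\xi,0)\, g(\xi)\, d\xi,
\]
so that, up to the sign convention adopted there, $u_1$ is the function treated in Theorem~\ref{solution non hom Cauchy pb, case g=0} and $u_2$ is the one treated in Theorem~\ref{thm homogeneous Cauchy pb}.

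First I would verify that the hypotheses of both ingredient results are in force. The datum $g$ is continuous and satisfies (\ref{growth condition g}); the source $f$ is continuous in $(x,t)$, locally Dini continuous in $x$ uniformly in $t \in [0,T]$, and obeys (\ref{growth condition f}) with the same exponent $\nu$; finally $\omega$ is assumed to satisfy the extra condition (\ref{stronger condition omega}), so that Lemma~\ref{regularity of V_f}, and hence Theorem~\ref{solution non hom Cauchy pb, case g=0}, are applicable. The only quantitative point to keep track of is the smallness of $T$: Theorem~\ref{thm homogeneous Cauchy pb} needs $T$ below some threshold $\delta_1/\nu$ and Theorem~\ref{solution non hom Cauchy pb, case g=0} needs $T$ below some threshold $\delta_2$, both depending only on the structure of $\mathbb{G}$ and on $\Lambda, \delta$; choosing $T$ smaller than the minimum of the two makes both results available simultaneously. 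The absolute convergence of the two integrals defining $u_1$ and $u_2$ follows from the Gaussian bound (\ref{estimate Gamma}) together with the Tychonoff growth of $f$ and $g$, exactly as in the proofs of those theorems.

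Then I would assemble the pieces. By Theorem~\ref{solution non hom Cauchy pb, case g=0} one has $u_1 \in \mathfrak{C}^2(\mathbb{R}^n\times(0,T)) \cap C(\mathbb{R}^n\times[0,T])$, $Hu_1 = f$ in $\mathbb{R}^n\times(0,T)$, and $u_1(x,t) \to 0$ as $\widehat{d}((x,t),(x_0,0)) \to 0$; by Theorem~\ref{thm homogeneous Cauchy pb}, $u_2$ lies in the same class, $Hu_2 = 0$, and $u_2(x,t) \to g(x_0)$ in the same limit. Since $\mathfrak{C}^2(\mathbb{R}^n\times(0,T))$ and $C(\mathbb{R}^n\times[0,T])$ are vector spaces, $u = u_1 + u_2$ belongs to $\mathfrak{C}^2(\mathbb{R}^n\times(0,T)) \cap C(\mathbb{R}^n\times[0,T])$; by linearity of $H$, $Hu = f + 0 = f$ pointwise in $\mathbb{R}^n\times(0,T)$; and each of the three requirements in Definition~\ref{def concept of solution} is additive, so in particular $u(x,t) = u_1(x,t) + u_2(x,t) \to 0 + g(x_0) = g(x_0)$ as $\widehat{d}((x,t),(x_0,0)) \to 0$. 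This gives the claim.

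I do not expect any genuine obstacle here: all the analytic effort is already absorbed in Theorems~\ref{thm homogeneous Cauchy pb} and~\ref{solution non hom Cauchy pb, case g=0}. The only points demanding (routine) care are the bookkeeping of the admissible range of $T$ and the observation that the solution concept of Definition~\ref{def concept of solution} passes to sums; once these are noted, the statement follows from the linearity of $H$.
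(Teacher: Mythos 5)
Your proposal is correct and is essentially the paper's own proof: the paper likewise obtains the result as a direct consequence of Theorem \ref{thm homogeneous Cauchy pb} and Theorem \ref{solution non hom Cauchy pb, case g=0} via linearity of the Cauchy problem. You also rightly flag the sign convention, since the statement as printed omits the minus sign in front of the source term that appears both in Theorem \ref{solution non hom Cauchy pb, case g=0} and in the introduction's version of this result.
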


\vspace{2mm}

\begin{proof}
    The thesis is a direct consequence of Theorem \ref{thm homogeneous Cauchy pb} and Theorem \ref{solution non hom Cauchy pb, case g=0}, thanks to the linearity of the Cauchy problem (\ref{Cauchy pb H}).
\end{proof}

\vspace{1cm}

\begin{address}
    M. Faini: Dipartimento di Matematica, Politecnico di Milano, via Bonardi 9, 20133 Milano, Italy.\\
    e-mail: matteo.faini@polimi.it
\end{address}

\vspace{2mm}

\addtocontents{toc}{\vspace{2em}} % Add a gap in the Contents, for aesthetics

\end{document}